\documentclass[11pt,a4paper]{article}
\usepackage[utf8]{inputenc}
\usepackage{amsmath,amssymb,amsthm,mathtools}
\usepackage[margin=28mm]{geometry}
\usepackage[hidelinks]{hyperref}
\usepackage{microtype}
\hypersetup{pdftitle={Finite Length of Locally Analytic Vectors for GL2(F) and D(F) Units},pdfauthor={Pengcheng Ding}}
\theoremstyle{plain}
\newtheorem{theorem}{Theorem}[section]
\newtheorem{lemma}[theorem]{Lemma}
\newtheorem{proposition}[theorem]{Proposition}
\newtheorem{corollary}[theorem]{Corollary}
\theoremstyle{definition}

\theoremstyle{remark}

\numberwithin{equation}{section}
\newcommand{\Q}{\mathbf Q}
\newcommand{\Z}{\mathbf Z}

\newcommand{\C}{\mathbf C}
\newcommand{\Qp}{\mathbf Q_p}

\newcommand{\GL}{\mathrm{GL}}
\newcommand{\SL}{\mathrm{SL}}
\newcommand{\an}{\mathrm{an}}
\newcommand{\gr}{\operatorname{gr}}

\newcommand{\vp}{v_p}
\newcommand{\bSH}{\overline S_H}
\newcommand{\bST}{\overline S_T}
\newcommand{\eps}{\varepsilon}
\begin{document}

\title{Finite Length of Locally Analytic Vectors\\
for $\mathrm{GL}_2(F)$ and $D_F^\times$}
\author{Pengcheng Ding}
\date{}
\maketitle

\begin{abstract}
Let $F/\mathbf Q_p$ and $E/\mathbf Q_p$ be finite extensions, and let
$G=\mathrm{GL}_2(F)$ or $D_F^\times$, where $D_F$ is the quaternion
division algebra over $F$. Let $\Pi$ be an admissible Banach
$E$-representation of $G$ whose locally $\mathbf Q_p$-analytic vectors
$\Pi^{\mathrm{an}}$ have an infinitesimal character.
If $p>2$ or $F=\mathbf Q_p$, we prove that $\Pi^{\mathrm{an}}$ and $\Pi$
have finite length in their respective admissible categories.
\end{abstract}

\section{Introduction}
\label{sec:introduction}
\subsection{The finite-length problem}
Let $F/\mathbf Q_p$ and $E/\mathbf Q_p$ be finite, let $D_F$ be the
quaternion division algebra over $F$, and put $G=D_F^\times$ or
$\mathrm{GL}_2(F)$. We study an admissible Banach $E$-representation
$\Pi$ of $G$ and its locally $\mathbf Q_p$-analytic vectors
$\Pi^{\mathrm{an}}$. Write
$\mathfrak g_{G,E}=E\otimes_{\mathbf Q_p}\operatorname{Lie}_{\mathbf Q_p}(G)$.
An infinitesimal character is an $E$-algebra homomorphism
\[
 \chi_\Pi:Z\bigl(U(\mathfrak g_{G,E})\bigr)\longrightarrow E
 \quad\text{such that}\quad
 zv=\chi_\Pi(z)v
 \quad(z\in Z(U(\mathfrak g_{G,E})),\ v\in\Pi^{\mathrm{an}}).
\]
Assuming that $\Pi^{\mathrm{an}}$ has an infinitesimal character, we ask
whether it has finite length in the admissible locally analytic category
and whether $\Pi$ has finite length in the category of admissible Banach
representations. Subobjects in the latter
category are closed $G$-stable subspaces.

These questions are motivated by the $p$-adic Jacquet--Langlands
correspondence. Scholze \cite{bib8} uses the Lubin--Tate tower to construct
cohomological functors from admissible smooth
$\GL_2(\Qp)$-representations with torsion coefficients to admissible
smooth $D_{\Qp}^\times$-representations with a commuting Galois action.
Pa\v{s}k\=unas extends this construction to admissible unitary Banach
representations, obtaining the functors $\check S^i$
\cite[\S3.3 and Lem. 3.4]{bib17}.
For $p>2$ and an absolutely irreducible nonordinary admissible unitary
Banach representation $\Pi$ of $\GL_2(\Qp)$ with a central character,
Dospinescu--Pa\v{s}k\=unas--Schraen show that $\check S^1(\Pi)^{\an}$
has the same infinitesimal character as $\Pi^{\an}$
\cite[Cor. 8.16]{bib2}.

For $G=\GL_2(\Qp)$, if $\Pi$ is unitary, has a central character, and
$\Pi^{\an}$ has an infinitesimal character, then $\Pi$ has finite length
\cite[Cor. 6.4]{bib2}, and so does $\Pi^{\an}$ \cite[Rem. 0.5(ii)]{bib11}. When $p>2$ and $\Pi$ is absolutely
irreducible and nonordinary, Dospinescu computes the infinitesimal
character of $\Pi^{\an}$ in terms of the Sen weights of its associated Galois
representation \cite[Thms. 1.1--1.2]{bib18}.

For $G=D_{\Qp}^\times$, \cite[Thm. 6.1(b)]{bib2} bounds the canonical
(Gelfand--Kirillov) dimension of $\Pi$ by one when $\Pi^{\an}$ has an
infinitesimal character. We bound the number of successive quotients of
canonical dimension one independently of the radius. The key remaining issue for finite
length is to control subquotients of $\Pi^{\an}$ of
canonical dimension zero. These are
locally algebraic on the derived group (Lemma~\ref{lem2-5}), so a
nonalgebraic infinitesimal character for the derived Lie algebra
excludes them. When $\Pi$ also has a central character, this yields
finite length of $\Pi$ \cite[Cor. 6.7(a)]{bib2}.

In the algebraic case and under the central character hypothesis,
\cite[Cor. 6.7(b)]{bib2} gives finite length of the distribution modules
obtained from the strong dual of $\Pi^{\an}$ at each fixed analytic
radius. Passing to $\Pi^{\an}$ then requires control of the
subquotients of canonical dimension zero uniformly as the radius tends to one.
For $p>2$, Pa\v{s}k\=unas proves that an admissible unitary Banach
representation of $D_{\Qp}^\times$ of canonical dimension zero is finite-dimensional,
hence has finite length \cite[Lem. 6.9]{bib17}. Under the global and
residual hypotheses of \cite[Prop. 6.15]{bib17}, the subspace of
$\check S^1(\Pi)$ locally algebraic on the reduced-norm-one subgroup is
finite-dimensional. Every Banach subrepresentation of canonical dimension
zero is contained in this finite-dimensional subspace
\cite[Prop. 6.13]{bib17}. Building on Pa\v{s}k\=unas's work,
Hu--Wang \cite{bib3} and Liu--Wang \cite{bib5} prove that further classes
of $D_{\Qp}^\times$-representations obtained from $\check S^1$ have finite
length as Banach representations.

\subsection{Main result and proof strategy}
\begin{theorem}\label{thm:finite-length}
Let $F/\mathbf Q_p$ be finite, let $G=D_F^\times$ or
$\mathrm{GL}_2(F)$, let $E/\mathbf Q_p$ be finite, and let $\Pi$ be an
admissible Banach $E$-representation of $G$. Suppose
$\Pi^{\mathrm{an}}$ has infinitesimal character $\chi_\Pi$ and assume
$p>2$ or $F=\mathbf Q_p$. Then $\Pi^{\mathrm{an}}$ has finite length
in the category of admissible locally analytic representations of $G$,
and $\Pi$ has finite length in the category of admissible Banach
representations of $G$.
\end{theorem}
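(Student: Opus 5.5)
We pass to the dual side. Fix a compact open uniform pro-$p$ subgroup $H\subset G$ (using $p>2$ or $F=\Qp$ to guarantee that a suitable $H$ exists with a good filtration on $\mathbf Z_p[[H]]$). By Schneider--Teitelbaum duality, admissible locally analytic representations correspond to coadmissible modules $M=(\Pi^{\an})'$ over $D(H,E)=\varprojlim_r D_r(H,E)$, and finite length of $\Pi^{\an}$ is equivalent to finite length of $M$ as a coadmissible $D(G,E)$-module. The infinitesimal character makes $M$ a module over $D(H,E)\otimes_{Z(U(\mathfrak g))}E$.

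The plan has three steps, carried out uniformly in the radius $r<1$.

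\textbf{Step 1 (dimension bound).} I would show that each $M_r=D_r(H)\otimes_{D(H)}M$ has canonical dimension at most $\dim G-\dim(\text{generic orbit})$. Concretely, I would pass to $\gr D_r(H)$, which is a polynomial ring over $\mathbf F_p$-type coefficients. The central elements of $U(\mathfrak g)$, the Casimirs for each embedding $F\hookrightarrow E$, acting by scalars force the characteristic variety of $\gr M_r$ into the nilpotent cone of $\mathfrak g$. For $G=D_F^\times$ this is extended over $F\otimes E$ after splitting. This yields a bound $\le \dim \mathcal N/2$, which is one per embedding. This is the analogue of \cite[Thm. 6.1(b)]{bib2} for general $F$.

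\textbf{Step 2 (counting the top-dimensional pieces independently of $r$).} I would bound the number of successive quotients of $M_r$ of maximal canonical dimension by the multiplicity of $\gr M$ along the components of the nilpotent cone, that is, by a characteristic-cycle length. This multiplicity is controlled by the rank of $M$ over $\Lambda=\mathcal O[[H]]$ modulo the center, which is independent of $r$. A Jordan--Hölder filtration of $M$ would then have boundedly many jumps in top dimension.

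\textbf{Step 3 (dimension-zero pieces).} This is the main obstacle. By Lemma~\ref{lem2-5}, subquotients of canonical dimension zero are locally algebraic on the derived group $G^{\mathrm{der}}$ ($\SL_2(F)$ or the norm-one units).
\begin{itemize}
\item If $\chi_\Pi$ restricted to $Z(U(\mathfrak g^{\mathrm{der}}))$ is nonalgebraic, no such pieces exist.
\item Otherwise, each such piece is a locally algebraic representation $W\otimes\sigma$, where $W$ is the unique algebraic representation with that infinitesimal character and $\sigma$ is smooth. I would bound the total size of $\sigma$ by the smooth vectors of $\operatorname{Hom}_{G^{\mathrm{der}}}(W,\Pi)$. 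This is a closed subrepresentation of the Banach $\Pi$ with canonical dimension zero, and it is finite-dimensional by the argument of \cite[Lem. 6.9]{bib17} extended to $F$ (this is where $p>2$ is needed, for torsion-freeness of $H$ and the $\mathcal O[[H]]$-module argument).
\end{itemize}
Combining Steps 1--3 gives a uniform bound on the length of $M_r$, so $M$ has finite length. A Banach subrepresentation chain in $\Pi$ gives a strictly monotone chain in $\Pi^{\an}$, because analytic vectors are dense and exact. Hence $\Pi$ also has finite length. I do not address the central character: it is not assumed, so one should first twist by characters of $Z(G)$, or treat the center's Lie algebra as already in $Z(U(\mathfrak g))$, which acts by a scalar.
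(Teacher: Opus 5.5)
There is a genuine gap, in two places.

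\textbf{Intermediate canonical dimensions.} Your Steps 1--3 handle only the top canonical dimension and dimension zero. For $d=[F:\mathbf Q_p]>1$, successive quotients can have any canonical dimension $0<s<d$, and nothing in your proposal bounds their number uniformly in the radius. The top-dimensional count in Step 2 can be made to work, as in Lemma~\ref{lemA-4}: intersect the chain with the lattice; the reductions of the saturated pieces then form a chain in the fixed module $\gr\overline M$, and top-dimensional successive quotients contribute positive, additive multiplicities. The relevant invariant is this multiplicity, not a rank of $M$ over $\Lambda_H$, which is zero because $M$ is torsion. The argument does not recurse, however. The reductions of lower-dimensional successive quotients carry no a priori bounded invariant: inside a $\delta$-dimensional graded module, quotients $N/f^kN$ by a homogeneous nonzerodivisor already have unbounded multiplicity in dimension $\delta-1$.

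The missing idea is the paper's passage to an analytic torus. For Sorgdrager's weighted subgroup $H$, the Casimir relations make $\gr\overline M$ finite over the graded ring of a torus $T\subseteq H$. Hence every $M_n$ is generated by at most $m$ elements over $D_{T,n}$ (Proposition~\ref{prop3-3}). This construction, rather than torsion-freeness of $H$, is where $p>2$ is used. The $\mathfrak{sl}_2$-triples with $h_i\in E\otimes_{\mathbf Q_p}\operatorname{Lie}T$ then give the fiber bound $\mu_x(Q)\le 2^dm$ for every subquotient and every point (Theorem~\ref{thm4-3}). They also force each support component to be a component of some $Z_{I,\lambda,n}$ with the $\lambda_i$ algebraic weights (Proposition~\ref{prop4-5}). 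These spaces are geometrically irreducible (Proposition~\ref{prop4-8}). So in each positive dimension there is a finite, radius-independent family of possible components, along which ranks are additive and bounded by $2^dm$ (Lemma~\ref{lem4-9}, Theorem~\ref{thm4-10}). For $F=\mathbf Q_p$ there are no intermediate dimensions, but Step 3 still needs repair.

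\textbf{Canonical dimension zero.} Step 3 fails as stated. The pieces of canonical dimension zero are subquotients of $\Pi^{\an}$ sitting anywhere in a chain. They are not closed subrepresentations of the Banach space $\Pi$, and they need not come from locally algebraic vectors of $\Pi$. Conversely, the closure of the locally algebraic vectors need not have Gelfand--Kirillov dimension zero. For an infinite-dimensional irreducible $\Pi$ of $\GL_2(\mathbf Q_p)$ with nonzero locally algebraic vectors, these vectors are dense and $\Pi$ has dimension one. So \cite[Lem. 6.9]{bib17} does not apply.

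For $\GL_2(F)$ the smooth factor $\rho$ of Lemma~\ref{lem2-5} is typically infinite-dimensional, so your finite-dimensionality claim is false. Moreover, no bound on the length of $M_r$ uniform in $r$ can exist: the radius modules $V^\vee\otimes(\rho^{H_n})^\vee$ have $D_{H,n}$-length equal to the $H$-length of $\rho^{H_n}$, which tends to infinity. One can only bound $G$-stable coadmissible chains, choosing the radius depending on the chain.

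The paper does this by comparing the upper bound $\dim W\le C_{\mathrm{fib}}p^{dc_H(W)}$ at each radius (Corollary~\ref{cor4-4}, a consequence of the fiber bound) with lower bounds from the orbit method:
\begin{itemize}
\item For $\SL_1(D_F)$, counting Galois conjugates over $E$ gives $\dim_EW\ge cp^{(d+1)c_H(W)}$, which bounds the conductor (Propositions~\ref{prop5-3} and~\ref{prop5-5}).
\item For $\SL_2(F)$, infinite-dimensional irreducible smooth representations satisfy $\dim\tau^{H_n}\ge\kappa_Hp^{dn}$, which bounds the number of $G$-stable pieces (Propositions~\ref{prop5-4} and~\ref{prop5-6}).
\end{itemize}
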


We outline the proof for $p>2$. The aim is to bound the lengths of
finite $G$-stable coadmissible chains independently of the analytic
radius. Section~\ref{sec:distribution-algebras} develops the filtered
algebras and lattice comparisons that allow information from the
infinitesimal action to pass to associated graded modules.

In Section~\ref{sec:analytic-torus}, we follow Sorgdrager's method
\cite{bib9}. We choose a suitably weighted compact open subgroup $H$ of the
derived group $G_1$ and an integral lattice in the Banach dual.
Casimir elements and the infinitesimal character then give annihilation
relations on the associated graded module of the reduction of this lattice. We choose an analytic torus $T\subseteq H$ so that this
graded module is finite over the polynomial algebra arising from $T$.
Lifting graded generators gives finiteness over the torus distribution
algebra at each sufficiently large radius, with a uniform generator
bound (Proposition~\ref{prop3-3}). Corollary~\ref{cor3-5} identifies the
$H$- and $T$-radius modules of coadmissible subquotients.

Section~\ref{sec4} uses this finiteness to bound the number of successive
quotients of positive canonical dimension. At a fixed radius,
Theorem~\ref{thm4-3} and Lemma~\ref{lem4-9} give a uniform bound for
the ranks of subquotients of the completed analytic dual along their
support components. Propositions~\ref{prop4-5} and~\ref{prop4-8}
describe the support components of positive dimension and prove their
geometric irreducibility; their defining equations belong to a finite
family independent of the radius. Additivity of rank along these
components bounds the number of successive quotients of maximal
canonical dimension. Induction on canonical dimension then bounds the
number of successive quotients with positive canonical dimension,
uniformly in the radius
(Theorem~\ref{thm4-10}).

Section~\ref{sec5} treats the subquotients of canonical dimension zero. For a nonzero
subquotient in this dimension, Lemma~\ref{lem2-5} writes its strong dual
as a tensor product of a fixed algebraic representation and a smooth
admissible representation. The coadjoint orbit method of Boyarchenko--Sabitova \cite{bib1}
gives lower bounds for the dimensions of smooth irreducible
representations in terms of their conductors
(Proposition~\ref{prop5-2}). Galois conjugation over the fixed coefficient
field and the $G$-action yield the further estimates in
Propositions~\ref{prop5-3} and~\ref{prop5-4}.
Comparing these lower bounds with the upper bound of
Corollary~\ref{cor4-4} controls the smooth factors and gives uniform
bounds for the lengths of $G$-stable chains in canonical dimension zero
(Propositions~\ref{prop5-5} and~\ref{prop5-6}).

For each finite chain, we choose a radius at which the canonical
dimensions of all its nonzero successive quotients have stabilized.
Combining the bounds in positive canonical dimension and canonical
dimension zero then proves finite
length (Subsection~\ref{sec5-5}).

For $F=\Qp$, Appendix~\ref{appA} gives a simpler proof. The main
simplification is a direct upper bound for the dimensions of
subquotients of canonical dimension zero at a fixed radius in terms of
the conductors of their smooth factors, obtained from associated graded
modules. Ranks over a polynomial ring in one variable bound the number
of successive quotients of canonical dimension one. Combining these
estimates with Subsection~\ref{sec5-4} proves finite length for every
prime.

\subsection*{Notations}
We use a sufficiently large finite extension $E/\Qp$ as the coefficient
field. Let $F_0$ be the maximal unramified subextension of $F/\Qp$, and
put $d=[F:\Qp]$, $e=e(F/\Qp)$, and $f=[F_0:\Qp]$, so $d=ef$.
Write $\mathcal O_F$ and $\mathcal O_E$ for the rings of integers, fix
uniformizers $\pi_F$ and $\varpi$, and put $k_E=\mathcal O_E/(\varpi)$.
Normalize $v_p(p)=1$ and $v_F(\pi_F)=1$, and set $|a|=p^{-v_p(a)}$.

Write $G_1=\SL_1(D_F)$ for $G=D_F^\times$ and $G_1=\SL_2(F)$ for
$G=\GL_2(F)$, and put
$\mathfrak g_E=E\otimes_{\Qp}\operatorname{Lie}_{\Qp}(G_1)$.
Locally analytic means locally $\Qp$-analytic; group dimensions are over
$\Qp$.

Write $(-)'$ for the continuous dual, $(-)'_b$ for the strong dual, and
$(-)^\vee$ for the contragredient dual of a finite-dimensional representation.
Dual actions are given by $(g\lambda)(v)=\lambda(g^{-1}v)$.

We write $c=c(A,B,\ldots)$ to mean that $c$ depends only on the
indicated objects and on choices fixed throughout the paper.

\subsection*{Acknowledgements}
The author thanks his advisor, Yongquan Hu, for helpful discussions and
support.

\subsection*{Author contributions and use of AI tools}
The author formulated the initial conjecture over $\Q_p$ and established
the upper bound on dimension in terms of the conductor in
Proposition~\ref{propA-3}. GPT-5.6 Sol (OpenAI) developed the arguments
giving the lower bounds in Propositions~\ref{prop5-2} and~\ref{prop5-3} for the case
$F=\Q_p$. Combining these estimates yielded the proof over $\Q_p$.
GPT-6 Astra (OpenAI) subsequently developed the extension to the general
case. These tools also assisted with calculations and the exposition
of the manuscript.

The author revised the overall proof strategy, including the organization
of the arguments at each fixed radius and the passage to the global
conclusion, and improved several proofs to make their reasoning more
transparent. An earlier proof of Proposition~\ref{prop4-8} relied on a
strong geometric result; following discussions and suggestions by the
author, an algebraic proof was developed to replace that argument.
The author made the final decisions on the arguments and results included
in the manuscript, checked all proofs, and takes full responsibility for
its mathematical correctness and final content.

\section{Distribution algebras and locally analytic vectors}
\label{sec:distribution-algebras}
We take $E$ to contain the normal closure of $F$ and to split $D_F$,
and identify $F$ with a subfield of $E$. We also assume
$p^{-w}\in|E^\times|$ for the finitely many rational basis weights used
below. These conditions and the additional ones in
Section~\ref{sec:analytic-torus} require only a fixed finite coefficient
extension, independent of the radius. Finite length descends to the
original field, since finite scalar extension is exact and preserves
nonzero quotients.

\subsection{Groups and valuations}
\label{subsec:groups-valuations}
Let $H$ be a compact $p$-adic analytic group of dimension $d_H\geq1$, equipped with a strictly saturated $p$-valuation $\omega_H$. Choose an adapted ordered basis $g_1,\ldots,g_{d_H}$. Thus the ordered coordinate map $\Z_p^{d_H}\to H$ is a homeomorphism, and
\[
 \omega_H(g_1^{a_1}\cdots g_{d_H}^{a_{d_H}})
 =\min_j\{\vp(a_j)+w_j\},\qquad w_j=\omega_H(g_j).
\]
Saturation means that every element of valuation greater than $p/(p-1)$ is a $p$th power in $H$. Strict saturation additionally requires
\begin{equation}\label{eq2-1}
 \frac1{p-1}<w_i<\frac p{p-1}\qquad(1\leq i\leq d_H)
\end{equation}
\cite[Def. 4.1]{bib9}. Put
\[
 \mathfrak h=\log H,\qquad x_j=\log g_j,\qquad
 \mathfrak h_E=\mathfrak h\otimes_{\Z_p}E,\qquad U=U(\mathfrak h_E).
\]
Lazard's logarithm identifies $H$ with its Lie lattice $\mathfrak h$ \cite{bib4}. For $n\geq0$, the subgroup
\[
 H_n=H^{p^n}=\exp(p^n\mathfrak h)
\]
is normal in $H$, with ordered basis $g_1^{p^n},\ldots,g_{d_H}^{p^n}$. Its induced strictly saturated valuation is $\omega_{H,n}=\omega_H|_{H_n}-n$, so these basis elements again have weights $w_i$ \cite[\S4]{bib9}.

\subsection{Distribution algebras and completions}
\label{subsec:distribution-completions}
The completed group algebra is
\[
 \Lambda_H^\circ=\mathcal O_E[\![H]\!]
 =\varprojlim_{K'\triangleleft_o H}\mathcal O_E[H/K'],
\]
where $K'\triangleleft_o H$ means that $K'$ is an open normal subgroup
of $H$. Put $\Lambda_H=E[\![H]\!]:=\Lambda_H^\circ[1/p]$ and
$\overline\Lambda_H=\Lambda_H^\circ/\varpi\Lambda_H^\circ=k_E[\![H]\!]$.
Write $D_H=D(H,E)=C^{\an}(H,E)'_b$ for the locally analytic distribution
algebra. We use the same notation for a group element and its images in
these algebras; the image of $g\in H$ in $D_H$ is the Dirac distribution
$f\mapsto f(g)$. Put $b_j=g_j-1$ and use ordered monomials $b^\alpha$ and
$x^\alpha$. The Lie embedding is given by
\[
 x\longmapsto\left[f\longmapsto
  \left.\frac d{dt}f(\exp(tx))\right|_{t=0}\right]
 \qquad(x\in\mathfrak h\otimes_{\Z_p}\Qp)
\]
and extends $E$-linearly to $\mathfrak h_E$ \cite[p.~449]{bib7}. It identifies $x_j$ with $\log(g_j)$ in $D_H$. On the contragredient dual, the
Lie algebra action is $(x\lambda)(v)=-\lambda(xv)$.

For $1/p\leq r<1$, write $D_r=D_r(H,E;\omega_H)$ for the completion of $D_H$ with respect to the norm
\[
 \left\|\sum_\alpha a_\alpha b^\alpha\right\|_r
 =\sup_\alpha|a_\alpha|r^{\sum_iw_i\alpha_i}.
\]
Its elements have ordered expansions with $|a_\alpha|r^{\sum_iw_i\alpha_i}\to0$.
Set $r_n=p^{-1/p^n}$ for $n\geq0$, and put
\[
 D_{H,n}=D_{r_n}(H,E;\omega_H),\qquad
 \widehat U_n=D_{1/p}(H_n,E;\omega_{H,n}).
\]
The index $n$ in $D_{H,n}$ specifies the radius $r_n$ for the fixed group $H$. We will identify $\widehat U_n$ with a completed enveloping algebra in Proposition~\ref{prop2-1} below. The norm $\|\cdot\|_r$ defines the following filtration:
\[
\begin{gathered}
 \operatorname{Fil}_r^sD_r=\{\lambda:\|\lambda\|_r\leq p^{-s}\},\qquad
 \gr_r^sD_r=\operatorname{Fil}_r^sD_r/\operatorname{Fil}_r^{>s}D_r,\\
 \operatorname{Fil}_r^{>s}=\bigcup_{t>s}\operatorname{Fil}_r^t.
\end{gathered}
\]
Write $\operatorname{gr}$ for associated graded rings and modules with
the specified filtration.
For a filtered module $M$, we use the shift convention $\operatorname{Fil}^s(M(-d))=\operatorname{Fil}^{s-d}M$; for a graded module, $M(-d)_s=M_{s-d}$. Recall that a homomorphism $f:M\to N$ preserving the filtrations is strict if
\[
 f(\operatorname{Fil}^sM)=f(M)\cap\operatorname{Fil}^sN
 \qquad\text{for every }s.
\]

For $0\ne\lambda\in D_r$, its principal symbol is
\[
 \sigma_r(\lambda)=\lambda+\operatorname{Fil}_r^{>s}D_r\in\gr_r^sD_r,
 \qquad s=-\log_p\|\lambda\|_r.
\]
Set $\sigma_r(0)=0$. For coefficients, let $\sigma:E\to\gr E$ denote the principal symbol map
associated with $\vp$. Ordered expansions show that $D_H\to D_r$ is injective. Hence the Lie embedding identifies $U$ with a subalgebra of every $D_r$.

For $n\geq0$, let $\widehat U_{\omega_{H,n}}(p^n\mathfrak h)_E$ denote
the completion of $U$ for the weighted Poincar\'{e}--Birkhoff--Witt (PBW) norm
\[
 \left\|\sum_\alpha a_\alpha(p^nx)^\alpha\right\|_{\omega_{H,n}}
 =\sup_\alpha|a_\alpha|p^{-\sum_iw_i\alpha_i}.
\]
Choose representatives $g$ for $H_n\backslash H$. The relative crossed
product $\widehat U_n\rtimes_{H_n}H$ is $\bigoplus_g\widehat U_n g$ as a
left $\widehat U_n$-module. Its multiplication extends that of
$\widehat U_n$ and the group law of $H$, with
\[
 gbg^{-1}=\operatorname{Ad}(g)(b),\qquad h=\exp(\log h)
\]
for $g\in H$, $b\in\widehat U_n$, and $h\in H_n$, with the exponential
taken in $\widehat U_n$. Here $\operatorname{Ad}(g)$ is the automorphism
of $\widehat U_n$ induced by conjugation on $H_n$.

The following descriptions for the weighted completions follow from
\cite[Prop. 3.1, Lem. 4.1, and (11)]{bib9}; for uniform pro-$p$ groups,
see \cite[Cor. 6.5.4 and Thm. 6.5.11]{bib10}.

\begin{proposition}\label{prop2-1}
For every $n\geq0$, the Lie and group embeddings give topological
$E$-algebra isomorphisms
\begin{equation}\label{eq2-2}
 \widehat U_{\omega_{H,n}}(p^n\mathfrak h)_E\simeq\widehat U_n,
 \qquad D_{H,n}\simeq\widehat U_n\rtimes_{H_n}H.
\end{equation}
In particular, $\widehat U_n$ is the closure of $U$ in $D_{H,n}$.
\end{proposition}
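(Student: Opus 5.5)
The plan is to reduce both isomorphisms in \eqref{eq2-2} to the case $n=0$ for a rescaled group, and then to compare norms on explicit ordered expansions.

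\textbf{Reduction to $n=0$.} By \S\ref{subsec:groups-valuations}, $H_n=\exp(p^n\mathfrak h)$ with the valuation $\omega_{H,n}$ is again strictly saturated. Its ordered basis $g_1^{p^n},\ldots,g_{d_H}^{p^n}$ has the same weights $w_i$, and its Lie lattice is $p^n\mathfrak h$ with basis $p^nx_j=\log(g_j^{p^n})$. Applying the $n=0$ statement to $(H_n,\omega_{H,n})$ at radius $1/p$ therefore gives the first isomorphism $\widehat U_{\omega_{H,n}}(p^n\mathfrak h)_E\simeq\widehat U_n$ for all $n$.

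\textbf{First isomorphism.} For a strictly saturated group $K$ with basis $h_i$ and $y_i=\log h_i$, I will compare the two expansions $\sum a_\alpha b^\alpha$ and $\sum c_\alpha y^\alpha$ in $D_{1/p}(K,E)$. Writing $b_i=\exp(y_i)-1$ and $y_i=\log(1+b_i)$, I will check that these substitutions are isometric for the respective norms $\sup|a_\alpha|p^{-\sum w_i\alpha_i}$. The needed coefficient estimate is $v_p(1/k!)+w(k-1)\ge 0$ and the reverse estimate for $\log$, and both follow from $w>1/(p-1)$ in \eqref{eq2-1}. The noncommutativity is handled by showing that the weighted PBW filtration is multiplicative. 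For this I use $\omega([x,y])\ge\omega(x)+\omega(y)$, so that reordering does not decrease the weight, exactly as in \cite[Prop. 3.1, Lem. 4.1]{bib9}. This is the main obstacle. One must show that the $\gr$ of $D_{1/p}$ is the weighted polynomial ring, and the upper bound $w_i<p/(p-1)$ is what guarantees that $p$th powers of $b_i$ do not collapse the filtration. I would cite \cite{bib9} (11) for this and treat it as the key input. As a consequence, the closure of $U$ in $D_{1/p}(K)$ is everything, since ordered $y$-monomials span a dense subspace.

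\textbf{Second isomorphism.} Choose coset representatives $g$ for $H_n\backslash H$ of the form $g_1^{a_1}\cdots g_{d_H}^{a_{d_H}}$ with $0\le a_i<p^n$. Every ordered monomial $b^\alpha$ in $D_H$ then decomposes as a finite sum over these $g$ with coefficients in $D(H_n)$. The key norm identity is that, at $r_n=p^{-1/p^n}$, we have $\|(g_i^{p^n}-1)\|_{r_n}=r_n^{p^nw_i}=p^{-w_i}$, which matches the weight of $g_i^{p^n}-1$ in $\widehat U_n$. This uses $(1+b)^{p^n}-1\equiv b^{p^n}$ modulo terms of larger norm, and that comparison again comes from the saturation bounds. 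I then obtain the topological isomorphism $D_{H,n}\simeq\bigoplus_g\widehat U_ng$. This is a direct sum of Banach spaces with equivalent norms, and the multiplication is forced by the group law and by $\operatorname{Ad}$. Finally, since $\widehat U_n$ is closed in $D_{H,n}$ as the $g=1$ summand, and $U$ is dense in it by the first step, $\widehat U_n$ is the closure of $U$ in $D_{H,n}$.
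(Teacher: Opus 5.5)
Your first step is sound and matches the paper's use of \cite[Prop.~3.1]{bib9}. The substitutions $y_i=\log(1+b_i)$ and $b_i=\exp(y_i)-1$ are norm-nonincreasing and mutually inverse, which gives the isometry and the density of $U$.

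The gap is in your second step. The identity $\|g_i^{p^n}-1\|_{r_n}=p^{-w_i}$, together with submultiplicativity of $\|\cdot\|_{r_n}$ and $\|g\|_{r_n}\le1$, only shows that the natural map $\bigoplus_g\widehat U_n g\to D_{H,n}$ is bounded. Three things still need the reverse estimate $\max_g\|\lambda_g\|\le C\,\|\sum_g\lambda_g g\|_{r_n}$: injectivity of the continuous extension, closedness of its image, and continuity of the inverse. That estimate is exactly the content of \cite[Lem.~4.1 and (11)]{bib9}, which the paper cites rather than reproves. Your phrase ``a direct sum of Banach spaces with equivalent norms'' asserts this estimate without proof.

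To supply it, use the symbol-level form of your computation, $\sigma_{r_n}(g_i^{p^n}-1)=\sigma_{r_n}(b_i)^{p^n}$. Here the term $b_i^{p^n}$ strictly dominates, and this is where $w_i<p/(p-1)$ enters. (Your ``modulo terms of larger norm'' should read smaller norm.) Combine this with multiplicativity of $\|\cdot\|_{r_n}$, i.e.\ the PBW structure of $\gr_{r_n}D_{r_n}$. Then the elements $\prod_i(g_i^{p^n}-1)^{\beta_i}\,b^a$ with $0\le a_i<p^n$ have pairwise distinct leading ordered monomials $\sigma_{r_n}(b)^{p^n\beta+a}$, so they are orthogonal. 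This identifies $\|\cdot\|_{r_n}$ on $D(H_n)$ with the $\omega_{H,n}$-norm at radius $1/p$, and it makes $\bigoplus_a\widehat U_n b^a$ a topological direct sum.

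Only then pass to the representatives $g^a$, via the unitriangular integral change of basis $g^a=\sum_{\gamma\le a}\binom{a}{\gamma}b^\gamma$. Its distortion is bounded because $p^{-\sum_iw_i}<r_n^{\sum_iw_ia_i}\le1$. You cannot work with the $g^a$ directly on symbols, because every group element has principal symbol $1$.

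The difficulty you flagged in the first step actually belongs here. Substituting one-variable series into ordered monomials produces ordered monomials, so the first step needs no reordering, no information about $\gr_{1/p}$, and only the lower bound $w_i>1/(p-1)$. The upper bound and the graded structure are what the second isomorphism requires, at radius $r_n$.
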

\begin{proof}
By \cite[Prop. 3.1]{bib9}, the Lie embedding $U\to\widehat U_n$ extends isometrically to the completion. Since $w_i>1/(p-1)$, the exponential series converges and the image contains all ordered polynomials in $g_i^{p^n}-1$. The image of the isometric extension is closed and contains a dense subalgebra of $\widehat U_n$, so the map is surjective. The second isomorphism follows from \cite[Lem. 4.1 and (11)]{bib9}, with multiplication determined by conjugation and the embedding $H_n\to\widehat U_n^\times$.
\end{proof}
The identity on $U$ gives continuous inclusions $\widehat U_n\hookrightarrow\widehat U_1$ for $n\geq1$: on the chosen Lie algebra generators, $p^nx_i=p^{n-1}(px_i)$.

For the filtered arguments below, assume that the $w_i$ are rational and that $H$ has been chosen as a $p$-power subgroup:
\[
 H=J^p,\qquad\omega_H=\eta|_H-1,
\]
where $(J,\eta)$ is strictly saturated and $g_j=\widetilde g_j^p$ for an adapted ordered basis $\widetilde g_1,\ldots,\widetilde g_{d_H}$ of $J$. Every strictly saturated group gives such an $H$; passing to $J^p$ preserves its ordered basis weights. For this shifted valuation,
\[
 \omega_H([g_i,g_j])\geq w_i+w_j+1>1+\frac{w_i+w_j}p.
\]
By \cite[Lem. 4.3]{bib9}, for every $1/p\leq r<1$ the ring $\gr_r\Lambda_H^\circ$ is a commutative polynomial algebra over $k_E$ in the symbols of $\varpi,b_1,\ldots,b_{d_H}$. Inverting the symbol of $\varpi$ gives $\gr_rD_r$.

We use the sequence of radii $(r_n)_{n\geq1}$. At each fixed radius $r_n$, a common rescaling makes all filtration degrees integral. The algebra $\Lambda_H$ is Noetherian and Auslander regular by \cite[Thm.
3.29]{bib19}, after a finite separable extension of the coefficient field as
in \cite[\S2.6]{bib20}. The same properties for $D_{H,n}$ follow from the criterion for filtered rings \cite[Thm. 3.4]{bib2}. The argument using the unit ball in \cite[proof of Thm. 8.9]{bib6} applies to these polynomial graded rings and gives $\operatorname{gldim}D_{H,n}\leq d_H$. Rationality of the weights allows the required finite coefficient extension at each radius; the bound then descends to $E$.

For $\Lambda_H^\circ\to D_{H,n}$, the associated graded map inverts the
coefficient symbol $\sigma(\varpi)$, as in \cite[proof of Prop. 4.7]{bib6}.
For the transition maps, after a finite coefficient extension at the two
radii, the intermediate completions in \cite[proofs of Lem. 4.8 and
Thm. 4.9]{bib6} reduce the graded maps to the inclusion of a polynomial
ring into a formal power series ring and localisation in the coefficient
symbol. The ordered expansions and polynomial graded rings above give
these same descriptions for our weights. These graded maps are flat. The flatness criterion for filtered rings and descent from the coefficient extensions therefore show that
$\Lambda_H\to D_{H,n}$ and $D_{H,n+1}\to D_{H,n}$ are left and right flat.
The transition maps $D_{H,n+1}\to D_{H,n}$ have dense image. As $r\to1$, the
weighted norms are cofinal with the norms $\sum_\alpha a_\alpha
b^\alpha\mapsto\sup_\alpha|a_\alpha|r^{|\alpha|}$. Hence
\[
 D_H=D(H,E)\simeq\varprojlim_{n\geq1}D_{H,n}
\]
with its strong dual topology, so $(D_{H,n})_{n\geq1}$ is a Fr\'{e}chet--Stein presentation of $D_H$.

\subsection{Coadmissible modules and integral lattices}
\label{subsec:coadmissible-lattices}
Let $M$ be a finite $\Lambda_H$-module, and put
\[
 M_n=D_{H,n}\otimes_{\Lambda_H}M.
\]
The $D_H$-module $D_H\otimes_{\Lambda_H}M$ is finitely presented,
hence coadmissible by \cite[Cor. 3.4(v)]{bib6}, and satisfies
\[
 D_H\otimes_{\Lambda_H}M\simeq\varprojlim_{n\geq1}M_n.
\]

For a coadmissible $D_H$-module $Q$, put
\[
 Q_n=D_{H,n}\otimes_{D_H}Q,\qquad Q\simeq\varprojlim_nQ_n,
 \qquad D_{H,n}\otimes_{D_{H,n+1}}Q_{n+1}\simeq Q_n.
\]
We call $Q_n$ the radius module of $Q$ at $r_n$. Each $Q_n$ is finitely generated over $D_{H,n}$, and $Q\mapsto Q_n$ is exact \cite[Rem. 3.2 and Cor. 3.4]{bib6}. If $Q\ne0$, then $Q_n\ne0$ for all sufficiently large $n$, by the transition identities.

A continuous Banach representation of the compact group $H$ admits an equivalent $H$-invariant norm. Admissibility of a Banach $H$-representation means finite generation of its continuous dual over $\Lambda_H$. For a Banach representation of a locally analytic group, admissibility is checked on compact open subgroups \cite[\S7]{bib6}. For an admissible Banach $H$-representation, the continuous integral dual of an invariant open bounded lattice is finitely generated over $\Lambda_H^\circ$. Locally analytic admissibility is used in the sense of Schneider--Teitelbaum. If $M=\Sigma'$ for an admissible Banach $H$-representation $\Sigma$, duality for locally analytic vectors gives
\begin{equation}\label{eq2-3}
 (\Sigma^{\an})'_b\simeq D_H\otimes_{\Lambda_H}\Sigma'\simeq\varprojlim_nM_n
\end{equation}
\cite[Thms. 5.2 and 7.1]{bib6}.

Fix a finite, $\varpi$-torsion-free $\Lambda_H^\circ$-lattice
$M^\circ\subset M$ and a finite free presentation with surjection
\[
 \mathcal P:(\Lambda_H^\circ)^a\twoheadrightarrow M^\circ.
\]
At each radius $r_n$, use the product filtration on the free module
and give $M^\circ$ its quotient filtration through $\mathcal P$.
Give $M_n$ the corresponding quotient filtration after base change to
$D_{H,n}$.
For submodules and quotients, use the induced submodule and quotient
filtrations, respectively.

The filtrations defined by $\|\cdot\|_{r_n}$ are discrete and
Zariskian, so the module filtrations above are good. The resulting short exact sequences remain exact
on associated graded modules. To compare with reduction modulo
$\varpi$, put $\overline M=M^\circ/\varpi M^\circ$ and equip it with
the quotient of the $\omega_H$-weighted filtration on $k_E[\![H]\!]$.
Write $\eps=\sigma(\varpi)$.
The graded module $\gr\overline M$ is independent of the radius. After rescaling the degrees on $\gr_{r_n}M^\circ/\eps\gr_{r_n}M^\circ$ to those of $\gr\overline M$, the comparison is
\begin{equation}\label{eq2-4}
\begin{gathered}
 \eps\text{ is regular on }\gr_{r_n}M^\circ,\qquad
 \frac{\gr_{r_n}M^\circ}{\eps\gr_{r_n}M^\circ}\simeq\gr\overline M,\\
 (\gr_{r_n}M^\circ)[1/\eps]\simeq\gr_{r_n}M_n
\end{gathered}
\end{equation}
for every $n\geq n_{\mathrm{res}}$, where $n_{\mathrm{res}}=n_{\mathrm{res}}(H,M)\geq1$ \cite[Prop. 2.1, Lems. 2.3--2.4, and Prop. 5.1]{bib9}. The graded rings are
\[
 \gr\overline\Lambda_H=k_E[X_1,\ldots,X_{d_H}],\qquad
 \gr_{r_n}\Lambda_H^\circ=k_E[\eps,X_{1,n},\ldots,X_{d_H,n}].
\]
The variable $X_{i,n}$ is the symbol of $b_i$ at radius $r_n$, with
\[
 \deg_{r_n}X_{i,n}=\frac{w_i}{p^n},\qquad\deg\eps=\vp(\varpi).
\]
Modulo $\eps$, multiply degrees by $p^n$ to identify $X_{i,n}$ with the variable $X_i$ of $\gr\overline\Lambda_H$, of degree $w_i$. These positive rational degrees may be scaled to integers. When using \eqref{eq2-4}, we suppress the radius index on the variables.

For a nonzero finite $R$-module $Y$, where $R=\Lambda_H$ or $D_{H,n}$, define its canonical dimension by
\[
 d_R(Y)=d_H-\inf\{i:\operatorname{Ext}^i_R(Y,R)\ne0\}.
\]
For an admissible Banach representation $\Pi$, $\operatorname{GKdim}\Pi$
is the canonical dimension of $\Pi'$ over $E[\![K]\!]$, for a compact open
uniform pro-$p$ subgroup $K\subset G$; it is independent of $K$
\cite[\S3.4]{bib2}.
For coadmissible modules put $d_{D_H}(Q)=\sup_n d_{D_{H,n}}(Q_n)$, with dimension $-\infty$ for the zero module. These dimension functions are exact, and the dimensions $d_{D_{H,n}}(Q_n)$ are nondecreasing and eventually equal to $d_{D_H}(Q)$ for $Q\ne0$ \cite[\S8, condition (DIM), Lem. 8.4, and Thm. 8.9]{bib6}.
For an admissible locally analytic $H$-representation $\Sigma$, its
canonical dimension is $d_{D_H}(\Sigma'_b)$.
Write $\operatorname{Kdim}$ for Krull dimension.
For a good filtration on $Y$ compatible with the filtration defined
by $\|\cdot\|_{r_n}$ on $D_{H,n}$,
\begin{equation}\label{eq2-5}
 d_{D_{H,n}}(Y)=\operatorname{Kdim}(\gr_{r_n}Y)-1
\end{equation}
\cite[Thm. 3.4]{bib2}. The coefficient ring is $\gr E=k_E[\eps,\eps^{-1}]$, where $\eps=\sigma(\varpi)$; its Krull dimension one accounts for the subtraction. Using \cite[Thm. 3.3]{bib20} in the integral form
\cite[Prop. 3.5]{bib2}, together with \eqref{eq2-5} and \eqref{eq2-4},
we obtain, for $M\ne0$,
\[
 \operatorname{Kdim}\gr\overline M=d_{\Lambda_H}(M)=d_{D_{H,n}}(M_n)
 \qquad(n\geq n_{\mathrm{res}}),
\]
after increasing $n_{\mathrm{res}}$ if necessary.

\begin{lemma}\label{lem2-2}
Let $Y$ be a nonzero finite $D_{H,n}$-module. Then
\[
 d_{D_{H,n}}(Y)=0\quad\Longleftrightarrow\quad\dim_EY<\infty.
\]
\end{lemma}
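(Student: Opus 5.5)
Both implications go through a good filtration on $Y$ and \eqref{eq2-5}, which gives $d_{D_{H,n}}(Y)=\operatorname{Kdim}(\gr_{r_n}Y)-1$. The graded ring $\gr_{r_n}D_{H,n}=k_E[\eps,\eps^{-1},X_{1},\ldots,X_{d_H}]$ is a Laurent polynomial ring in $\eps$ over a polynomial ring. Fix a good filtration on $Y$: choose a finite generating set, let it be the image of a filtered free module $D_{H,n}^a$ with shifted degrees, and take the quotient filtration. Then $\gr Y$ is a finitely generated graded module over $k_E[\eps^{\pm1}][X_1,\ldots,X_{d_H}]$, with the $X_i$ of positive degree.

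\emph{($\Leftarrow$)} Assume $\dim_EY<\infty$. Each graded piece $\gr^sY$ is a $k_E$-vector space. Multiplication by $\eps$ is an isomorphism $\gr^s Y\to\gr^{s+\vp(\varpi)}Y$. For a finite-dimensional $E$-vector space with a norm-type filtration, $\gr Y$ is free of rank $\dim_EY$ over $\gr E=k_E[\eps^{\pm1}]$. To see this, use the standard fact that a finite-dimensional normed space over a discretely valued field has an orthogonal basis, up to equivalence of norms, and note that the filtration here is induced by a norm. Hence $\gr Y$ is finite over $k_E[\eps^{\pm1}]$, so its Krull dimension is at most $1$, and $d=0$. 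Since $Y\neq0$, $d\ge0$, so the value is exactly $0$.

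\emph{($\Rightarrow$)} Assume $\operatorname{Kdim}\gr Y=1$. The key step is to show that $\gr Y$ is finite over $k_E[\eps^{\pm1}]$. Because the $X_i$ have positive degree, the part of $\gr Y$ in degree zero modulo $\vp(\varpi)$ is a module over $k_E[\eps^{\pm1}]$, and we may pass to the degree-zero subring. A cleaner route is the following. Set $A=k_E[\eps^{\pm1}][X]$ and let $\mathfrak p$ be any prime of $A$ in the support of $\gr Y$. If some $X_i$ were not nilpotent on $\gr Y$, then, since $X_i$ has nonzero degree while $\eps$ is a unit, a graded support component would contain $\eps$-direction together with a nonconstant homogeneous element. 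That would force Krull dimension at least $2$: the graded prime $\mathfrak p^*$ does not contain $X_i$, and $A/\mathfrak p^*$ has dimension at least $1$ beyond the $\eps$-direction by homogeneity. Therefore each $X_i$ acts nilpotently on $\gr Y$. Then $\gr Y$ is finite over $A/(X_1,\ldots,X_{d_H})^N=k_E[\eps^{\pm1}][X]/(X)^N$, which is finite over $k_E[\eps^{\pm1}]$. Let $m$ be the rank of $\gr Y$ over $k_E[\eps^{\pm1}]$.

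Finally, lift to $Y$. Because the filtration is complete and separated (Zariskian, discrete), a homogeneous basis of $\gr Y$ over $\gr E$ lifts to elements $y_1,\ldots,y_m$ of $Y$. The standard lifting argument for complete filtered modules shows that $\bigoplus E y_j\to Y$ is surjective: it is a strict filtered map whose graded map is surjective, which suffices by completeness. Hence $\dim_E Y\le m<\infty$.

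I expect the main obstacle to be the graded dimension argument showing that the $X_i$ act nilpotently, i.e., that $\operatorname{Kdim}=1$ excludes homogeneous support beyond the $\eps$-line. To handle it, I would pick a minimal graded prime $\mathfrak q\ni$ in the support with $X_i\notin\mathfrak q$. Then the chain $\mathfrak q\subsetneq \mathfrak q+(\eps-c)$-type construction does not work directly because $\eps$ is a unit. Instead I would use that $A/\mathfrak q$ is $\Z$-graded (rational degrees rescaled) with a homogeneous non-nilpotent element of nonzero degree and a homogeneous unit $\eps$ of nonzero degree. Such a domain has Krull dimension at least $2$ unless $X_i^{a}\eps^{-b}$ is algebraic over $k_E$. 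In that case it is a unit constant in degree $0$, which gives a nonzero homogeneous relation $X_i^a=c\eps^b$. That relation is impossible in the positively graded quotient setting, so $X_i$ must lie in $\mathfrak q$ after all.
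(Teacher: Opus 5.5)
\textbf{Gap in the key step of $(\Rightarrow)$.} Your $(\Leftarrow)$ direction is the paper's argument, and your final lifting step is sound (complete source, separated target, surjective graded map). The step you single out as the key one is false as stated. You claim that $\operatorname{Kdim}\gr Y=1$ forces every $X_i$ to act nilpotently on $\gr Y$. You exclude the remaining case by asserting that a homogeneous relation $X_i^a=c\,\eps^b$ is impossible in the positively graded setting.

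It is not impossible. All degrees are rational, and integral after the common rescaling, so there are coprime $a,b\geq1$ with $a\deg X_1=b\deg\eps$. Then $\mathfrak q=(X_1^a-\eps^b,X_2,\ldots,X_{d_H})$ is a graded prime of $A=k_E[\eps^{\pm1},X_1,\ldots,X_{d_H}]$. The quotient $A/\mathfrak q$ is free of rank $a$ over $k_E[\eps^{\pm1}]$, so it has Krull dimension one. Yet $X_1\notin\mathfrak q$, and $X_1^a$ acts on $A/\mathfrak q$ as the unit $\eps^b$. Hence a graded module of Krull dimension one need not be killed by a power of $(X_1,\ldots,X_{d_H})$.

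In the paper's proof this phenomenon is harmless. After the extension $L/E$ with $r_n^{w_i}\in|L^\times|$, the variables can be renormalized to degree zero. Krull dimension zero of the reduced lattice then only says that its support is a finite set of closed points, possibly away from the origin, and nothing more is needed.

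\textbf{How to repair it.} The fix is local and uses your own dichotomy; replace ``nilpotent'' by ``integral''. Let $\mathfrak q$ be a minimal prime of $\operatorname{Ann}_A(\gr Y)$; these are graded and satisfy $\operatorname{Kdim}A/\mathfrak q\leq1$.
\begin{itemize}
\item If $X_i\notin\mathfrak q$, your transcendence-degree argument shows that the degree-zero element $X_i^a\eps^{-b}$ is algebraic over $k_E$. Clearing powers of $\eps$ in its minimal polynomial makes $X_i$ integral over $k_E[\eps^{\pm1}]$ modulo $\mathfrak q$.
\item If $X_i\in\mathfrak q$, this integrality is trivial.
\end{itemize}
Multiply the resulting monic equations over the finitely many minimal primes, and then raise to a power. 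This makes each $X_i$ integral over $k_E[\eps^{\pm1}]$ modulo $\operatorname{Ann}_A(\gr Y)$. Hence $\gr Y$ is finite over $\gr E$, which is all your lifting argument uses, and it gives $\dim_EY\leq\operatorname{rank}_{\gr E}\gr Y$.

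With this change your route is a valid alternative to the paper's. It stays over $E$ and avoids both the coefficient extension and the lattice result \cite[Prop.~3.5]{bib2}, at the cost of this short graded-algebra argument.

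\textbf{A minor point in $(\Leftarrow)$.} Passing to an equivalent norm changes $\gr Y$, so ``orthogonal basis up to equivalence'' does not compute $\gr Y$. Use instead that the quotient norm itself, which is separated since the filtration is good, has an orthogonal basis over the discretely valued field $E$.
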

\begin{proof}
Suppose first that $d_{D_{H,n}}(Y)=0$. We use the lattice argument of \cite[Lem. 3.12 and Prop. 3.5]{bib2}. For fixed $n$, choose a finite extension $L/E$ with $r_n^{w_i}\in|L^\times|$ for all $i$. Extend $\|\cdot\|_{r_n}$ to $D_{H,n}\widehat\otimes_E L$. By the description of the associated graded rings above, the reduction of its unit ball modulo the maximal ideal of $\mathcal O_L$ is a polynomial algebra over the residue field of $L$. By \cite[Prop. 3.5]{bib2}, the reduction of a suitable lattice in $L\otimes_EY$ has Krull dimension zero over this polynomial algebra. A finite basis of the reduction over the residue field lifts, by $\varpi_L$-adic completeness, to a finite set of $\mathcal O_L$-generators of the lattice. Thus $L\otimes_EY$ is finite-dimensional over $L$, so $Y$ is
finite-dimensional over $E$.

Conversely, if $Y\ne0$ is finite-dimensional over $E$, then
$\gr_{r_n}Y$ is a nonzero finite graded module over $\gr E$.
Its Krull dimension is one, so \eqref{eq2-5} gives
$d_{D_{H,n}}(Y)=0$.
\end{proof}

\subsection{Restriction to the derived group}
\label{subsec:derived-algebraic}
\begin{lemma}\label{lem2-3}
Let $\Pi$ be an admissible Banach representation of $G$. Suppose either that $\Pi$ has a central character or that $\Pi^{\an}$ has an infinitesimal character. Then some compact open subgroup $Z_0\subseteq Z(G)$ acts on $\Pi$ through a locally analytic character. Moreover, the restriction $\Pi|_{G_1}$ is admissible, and the natural inclusion identifies $(\Pi|_{G_1})^{\an}$ with $\Pi^{\an}$ as locally convex spaces.
\end{lemma}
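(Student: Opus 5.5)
We first produce the locally analytic character of a compact open subgroup $Z_0\subseteq Z(G)\simeq F^\times$, then deduce admissibility on $G_1$, and finally compare locally analytic vectors.

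\emph{Central character.} If $\Pi$ has a central character $\zeta$, then $\zeta$ is continuous because $\Pi$ is a continuous representation. A continuous character of the $p$-adic Lie group $Z(G)$ is automatically locally $\Qp$-analytic, so any compact open $Z_0$ works.

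\emph{Infinitesimal character.} Now suppose instead that $\Pi^{\an}$ has infinitesimal character $\chi_\Pi$. The Lie algebra $\mathfrak z=\operatorname{Lie}Z(G)$ sits inside the centre of $\mathfrak g_{G,E}$, hence inside $Z(U(\mathfrak g_{G,E}))$. So every $z\in\mathfrak z$ acts on $\Pi^{\an}$ by the scalar $\chi_\Pi(z)$. Choose $Z_0=\exp(p^m\mathcal O_F)$, a small compact open subgroup of $1+\pi_F\mathcal O_F$ on which the exponential converges. For $v\in\Pi^{\an}$ and $t=\exp(y)\in Z_0$, local analyticity of the orbit map gives
\[
 tv=\exp(y)v=\exp(\chi_\Pi(y))v,
\]
at least for $y$ in a neighbourhood depending on $v$. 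Since $y\mapsto tv$ is analytic on the connected-enough group $Z_0$ (locally analytic and determined by its derivative), and both sides are characters in $t$, the identity extends to all of $Z_0$. After possibly shrinking $Z_0$ so that $y\mapsto\exp(\chi_\Pi(y))$ converges on $\log Z_0$, the formula defines a locally analytic character $\psi$ with $tv=\psi(t)v$ on $\Pi^{\an}$. Because $\Pi^{\an}$ is dense in $\Pi$ (Schneider--Teitelbaum, for admissible $\Pi$) and $Z_0$ acts continuously, $Z_0$ acts on all of $\Pi$ by $\psi$.

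\emph{Admissibility on $G_1$.} The subgroup $K_1Z_0$ is open in $G$ for a compact open $K_1\subseteq G_1$, possibly after shrinking. For $G=\GL_2(F)$ this holds because $Z\cdot\SL_2$ has finite index on small open subgroups; for $D_F^\times$ it holds because the reduced norm is open. Admissibility may be checked on the compact open subgroup $K=K_1\times Z_0$ (after shrinking so that the product is direct). Since $Z_0$ acts by a scalar character, $E[\![K]\!]$ acts on $\Pi'$ through the quotient
\[
 E[\![K]\!]\to E[\![K_1]\!],\qquad z\mapsto\psi^{-1}(z)\ \text{(twisted appropriately)}.
\]
Hence $\Pi'$ finitely generated over $E[\![K]\!]$ is the same as $\Pi'$ finitely generated over $E[\![K_1]\!]$, and $\Pi|_{G_1}$ is admissible.

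\emph{Locally analytic vectors.} Let $v\in\Pi$ be locally analytic for $K_1$. On $K_1\times Z_0$ the orbit map is $(k,z)\mapsto\psi(z)kv$, which is locally analytic. Thus $(\Pi|_{G_1})^{\an}=\Pi^{\an}$ as sets. To identify the topologies, note that both spaces are compact-type duals of the corresponding distribution modules. Using \eqref{eq2-3}, we have
\[
 D(K,E)\otimes_{\Lambda_K}\Pi'\simeq D(K_1,E)\otimes_{\Lambda_{K_1}}\Pi',
\]
since $D(K,E)=D(K_1,E)\widehat\otimes D(Z_0,E)$ and $D(Z_0,E)$ acts through $\psi$. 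This isomorphism gives equality of strong duals, and hence the identification as locally convex spaces.

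The main obstacle is the infinitesimal-character case. There one must carefully pass from a Lie-algebra scalar to a group character on a fixed, uniform $Z_0$, and justify the direct product decomposition $K=K_1\times Z_0$. For the latter I would use that $Z_0\cap G_1$ is finite, namely $\pm1$ at most, and becomes trivial after shrinking.
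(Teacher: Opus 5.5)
Your argument breaks in the infinitesimal-character case, at the step from ``$tv=\exp(\chi_\Pi(y))v$ for $y$ in a neighbourhood depending on $v$'' to the same identity on a $Z_0$ chosen in advance. No connectedness is available here. $Z_0$ is totally disconnected, and the orbit map of $v\in\Pi^{\an}$ is only locally analytic, i.e.\ analytic on balls whose radius depends on $v$. A locally analytic function with zero derivative can be a nonconstant locally constant function. Two characters of $Z_0$ that agree on an open subgroup can differ by a nontrivial character of finite order. So your computation only shows that each $v$ is moved by $\exp(\chi_\Pi(\cdot))$ on some ball $B_v$, whereas the lemma needs one subgroup valid for all $v$. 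Shrinking $Z_0$ so that $\exp(\chi_\Pi(y))$ converges does not address this.

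The step never uses admissibility, and density of $\Pi^{\an}$ is not enough: without admissibility the conclusion is false. Let $\Pi$ be the $c_0$-completed direct sum of the representations $\tau_k\circ\operatorname{Nrd}$ of $G$ (with $\operatorname{Nrd}=\det$ for $\GL_2(F)$). Here $\tau_0$ is trivial and the $\tau_k$ are finite-dimensional smooth representations of $F^\times$ chosen so that no open subgroup of $Z(G)$ acts trivially on all of them. Then:
\begin{itemize}
\item $\Pi$ is a continuous unitary Banach representation.
\item $\mathfrak g_{G,E}$ acts by zero on $\Pi^{\an}$, because the projections onto the smooth summands are continuous and equivariant; so $\Pi^{\an}$ has an infinitesimal character.
\item The algebraic direct sum is a dense space of smooth vectors.
\item Yet no open subgroup of $Z(G)$ acts on $\Pi$ through a character.
\end{itemize}
This $\Pi$ is not admissible, but your argument applies to it verbatim.

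What is missing is uniformity in $v$, and this is where the paper uses admissibility. It fixes a small $p$-valued compact open $K\subset G$ and a lattice in the finitely generated module $M=\Pi'$. It then takes from \cite[Cor.~5.2]{bib9} a single radius $r$ for which $\iota\colon M\to D_r(K,E)\otimes_{E[\![K]\!]}M$ is injective. On the target every central Lie element $z$ acts by $-\chi_\Pi(z)$. For $z$ in a fixed small central lattice, the exponential series converges in $D_r(K,E)$ to the group element $\exp z$. Hence $\iota(\exp(z)m)=\exp(-\chi_\Pi(z))\iota(m)$, so by injectivity $\exp z$ acts on $M$ by $\exp(-\chi_\Pi(z))$, and by duality on $\Pi$ by $\exp(\chi_\Pi(z))$.

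In your formulation, the repair is a dense subspace of $\Pi$ consisting of vectors analytic on one fixed ball around $1$; for admissible $\Pi$ this is essentially the dual form of that injectivity. Your local computation then runs on a ball independent of $v$, and density plus continuity finish.

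Once $\psi$ exists, the rest is fine. Your admissibility argument through $E[\![K_1\times Z_0]\!]\to E[\![K_1]\!]$ is the paper's; openness of $K_1Z_0$ follows most cleanly from $\operatorname{Lie}G=\operatorname{Lie}G_1\oplus\operatorname{Lie}Z(G)$. For the topology you dualize via $D(K_1\times Z_0,E)\otimes_{E[\![K_1\times Z_0]\!]}\Pi'\simeq D(K_1,E)\otimes_{E[\![K_1]\!]}\Pi'$. The paper instead checks directly that multiplication by $\psi$ and restriction to $z=1$ are continuous on the Banach spaces of analytic functions on small charts, then passes to the inductive limit. Your route is viable, but two points would need justification: that $D(Z_0,E)$ modulo the ideal of $\psi$ is $E$, and that the canonical Fr\'echet topologies over the two distribution algebras agree.
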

\begin{proof}
Put $M=\Pi'$. If $\Pi$ has a central character, that character is locally analytic on a sufficiently small open subgroup of $Z(G)$. If $\Pi^{\an}$ has an infinitesimal character, choose a sufficiently small $p$-valued compact open subgroup $K\subset G$. Admissibility gives a finite, $\varpi$-torsion-free $\mathcal O_E[\![K]\!]$-lattice in $M$. By \cite[Cor. 5.2]{bib9}, we can choose a radius $r$ for which the following
natural map is injective:
\[
 \iota:M\hookrightarrow Y_r:=D_r(K,E)\otimes_{E[\![K]\!]}M,
 \qquad m\longmapsto1\otimes m.
\]
The infinitesimal character on the analytic dual passes to $Y_r$ and gives the scalar action $z|_{Y_r}=-\chi_\Pi(z)\operatorname{id}_{Y_r}$ for $z\in\mathfrak z_E:=\operatorname{Lie}(Z(G))\otimes_{\Qp}E$. After fixing this radius, take $z$ in a sufficiently small central Lie lattice so that $\exp z\in K$ and its exponential series converges in $D_r(K,E)$. Then
\[
 \iota(\exp(z)m)=\exp(z)\iota(m)=\exp(-\chi_\Pi(z))\iota(m)
 \qquad(m\in M).
\]
Since $\iota$ is injective, $\exp z$ acts on $M$ by the scalar $\exp(-\chi_\Pi(z))$. By contragredient duality it acts on $\Pi$ by $\exp(\chi_\Pi(z))$, a locally analytic character of a sufficiently small central subgroup $Z_0$.

Choose a sufficiently small compact open $H\subseteq G_1$ and shrink $Z_0$ so that $H\times Z_0\to HZ_0$ is an isomorphism onto an open subgroup of $G$. This follows from $\operatorname{Lie}(G)=\operatorname{Lie}(G_1)\oplus\operatorname{Lie}(Z(G))$. Let $\psi_M:Z_0\to E^\times$ be the character through which $Z_0$ acts on $M$. As in \cite[proof of Cor. 6.7]{bib2}, this action factors through the continuous quotient map
\[
 E[\![H\times Z_0]\!]\longrightarrow E[\![H]\!],
 \qquad(h,z)\longmapsto\psi_M(z)h.
\]
Since $\Pi|_{HZ_0}$ is admissible, the quotient map makes $M$ finite
over $\Lambda_H$. Thus $\Pi|_{G_1}$ is admissible.

For the assertion about locally analytic vectors, recall that $v\in\Pi$ is locally analytic precisely when its orbit map $g\mapsto gv$ is locally analytic \cite[Thm. 3.5.7]{bib12}; this can be checked near the identity. Let $\psi:Z_0\to E^\times$ be the locally analytic character giving the action on $\Pi$. Under $H\times Z_0\simeq HZ_0$, the orbit map of $v$ becomes
\[
 (h,z)\longmapsto hzv=\psi(z)\,hv.
\]
Multiplication by $\psi(z)$ and restriction to $z=1$ show that this map is locally analytic exactly when $h\mapsto hv$ is. Since $HZ_0$ is open in $G$ and $H$ is open in $G_1$, \cite[Lem. 3.5.4]{bib12} gives the equality of the locally analytic vectors for $G$ and $G_1$. On sufficiently small analytic charts, multiplication by the fixed character $\psi$ and restriction to $z=1$ are continuous on the corresponding Banach spaces of analytic functions. Passing to the inductive limits that define the locally analytic vector topologies shows that this identification is topological.
\end{proof}

\subsection{Infinitesimal characters and subquotients of canonical dimension zero}
\label{subsec:conductors-radii}
With the coefficient field fixed above,
$\mathfrak g_E\simeq\bigoplus_{\sigma:F\hookrightarrow E}\mathfrak{sl}_2(E)$.

From now on, assume that $\Pi^{\an}$ has infinitesimal character $\chi_\Pi$.
Put $M=\Pi'$ and $M_{\an}=(\Pi^{\an})'_b$.
The infinitesimal character on $M_{\an}$ restricted to
$Z(U(\mathfrak g_E))$ is
\[
 \chi=(\chi_\Pi\circ\iota_U)\big|_{Z(U(\mathfrak g_E))},
\]
where $\iota_U$ is the principal anti-automorphism of
$U(\mathfrak g_{G,E})$, defined by $\iota_U(x)=-x$ for
$x\in\mathfrak g_{G,E}$.
Choose a compact open $H\subseteq G_1$ satisfying the conditions of
Subsection~\ref{subsec:distribution-completions}. Lemma~\ref{lem2-3} and
\eqref{eq2-3} give $M_{\an}=D_H\otimes_{\Lambda_H}M$.
For $\Gamma\in\{H,G_1,G\}$, use the action on $M_{\an}$ obtained by
duality from $\Pi^{\an}$. The quadratic Casimirs used below are fixed
by $\iota_U$.

Call $\chi$ algebraic if it is the character of a finite-dimensional algebraic representation of the derived group, viewed as a $\Qp$-algebraic group by restriction of scalars from $F$. Such a character determines a unique irreducible algebraic representation $V$, called the algebraic type; put $d_V=\dim_E V$. Since $\mathfrak g_E$ is a product of copies of $\mathfrak{sl}_2(E)$, $V$ is self-dual. We write $V^\vee$ for the algebraic factor in the dual module.

By \cite[Prop. 3.2]{bib21} and \cite[Lem. 6.5]{bib2}, every
finite-dimensional locally analytic $H$-representation with character
$\chi$ is semisimple. It is zero if $\chi$ is nonalgebraic; otherwise
it has the form $V^\vee\otimes_EW^\vee$, with $W$ smooth.
When $\chi$ is algebraic, shrink $H$ so that the matrices of a basis
of $\log H$ on $V$ and $V^\vee$ are sufficiently small.
Proposition~\ref{prop2-1} then gives continuous extensions of their
$H_1$-actions to $\widehat U_1$.

For a finite-dimensional smooth $H$-representation $W$, set
\[
 c_H(W)=\min\{n\geq0:H_n\subseteq\ker W\},\qquad c_H(0)=0.
\]

\begin{lemma}\label{lem2-4}
Assume that $\chi$ is algebraic, with algebraic type $V$ as above. Let $W$ be a finite-dimensional smooth $H$-representation, and put
\begin{equation}\label{eq2-6}
 S=V^\vee\otimes_E W^\vee,
\end{equation}
with its diagonal $H$-action. For every $n\geq1$, this action extends continuously to $D_{H,n}$ if and only if $c_H(W)\leq n$.
\end{lemma}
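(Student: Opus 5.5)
Both directions reduce to the crossed-product description $D_{H,n}\simeq\widehat U_n\rtimes_{H_n}H$ of Proposition~\ref{prop2-1}. Under it, a continuous extension of the $H$-action on $S$ to $D_{H,n}$ amounts to two pieces of data. The first is a continuous action of $\widehat U_n$ on $S$ extending the Lie algebra action. The second is compatibility with the group action: the Dirac element of $h\in H_n$ must act as $\exp(\log h)$ computed in $\widehat U_n$, and the relation $gbg^{-1}=\operatorname{Ad}(g)(b)$ must hold. Since $\mathfrak h_E$ acts on $W^\vee$ trivially (it is smooth), the Lie algebra acts on $S$ through $V^\vee$ alone, as $x\otimes\mathrm{id}$.

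\emph{Sufficiency.} Suppose $c_H(W)\le n$, so $H_n$ acts trivially on $W^\vee$. By the choice of $H$ made before the lemma, the $H_1$-action on $V^\vee$ extends continuously to $\widehat U_1$. I will restrict this extension along the continuous inclusion $\widehat U_n\hookrightarrow\widehat U_1$ and tensor it with the identity on $W^\vee$. This gives a continuous $\widehat U_n$-action on $S$. On $h\in H_n$ it acts by $\exp(\log h)\otimes\mathrm{id}$, which equals the given action $h|_{V^\vee}\otimes h|_{W^\vee}$ because $h$ acts trivially on $W^\vee$. Next I will let each coset representative $g$ act by its given diagonal action. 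The conjugation relation holds on $V^\vee$ because $\operatorname{Ad}(g)$ is induced from conjugation and the action of $U$ on $V^\vee$ is $H$-equivariant; by continuity it then holds on all of $\widehat U_n$. The crossed-product multiplication therefore gives a continuous $D_{H,n}$-module structure. Continuity is automatic on the finite direct sum $\bigoplus_g\widehat U_n g$ with $S$ finite-dimensional.

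\emph{Necessity.} Suppose the action extends to $D_{H,n}$. Restricting to $\widehat U_n$ gives a continuous action that agrees with $x\otimes\mathrm{id}$ on $U$, because $U\subset D_H$ is dense in $\widehat U_n$ and the Lie action is determined by $D_H$. For $h\in H_n$, the Dirac element $h$ equals $\exp(\log h)$ inside $\widehat U_n$. Its image in $\operatorname{End}(S)$ is therefore $\exp(\log h)|_{V^\vee}\otimes\mathrm{id}=h|_{V^\vee}\otimes\mathrm{id}$. This uses that the exponential series converges in $\widehat U_n\subseteq\widehat U_1$ and maps by continuity to the algebraic action on $V^\vee$, which holds by the smallness assumption on $\log H$. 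On the other hand, $h$ acts on $S$ by $h|_{V^\vee}\otimes h|_{W^\vee}$. Since $h|_{V^\vee}$ is invertible, $h$ acts trivially on $W^\vee$, hence on $W$. Thus $H_n\subseteq\ker W$, which gives $c_H(W)\le n$.

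\emph{Main obstacle.} The delicate point is the identity $\exp(\log h)|_{V^\vee}=h|_{V^\vee}$ for all $h\in H_n$, together with the fact that the continuous $\widehat U_1$-action on $V^\vee$ really exists. This is exactly why $H$ was shrunk. I would verify it as follows. The operators $\rho(x_i)$ are small enough that the weighted PBW norm of $\widehat U_{\omega_{H,1}}(p\mathfrak h)_E$ bounds the operator norm, so the action extends. For the algebraic representation, $\exp(\rho(x))=\rho(\exp x)$ holds as a convergent matrix identity, and the extension is compatible with it. Everything else is a formal consequence of Proposition~\ref{prop2-1}.
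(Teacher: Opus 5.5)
Your proposal is correct and follows essentially the same route as the paper. Both directions go through the crossed-product description of Proposition~\ref{prop2-1}. They use the continuous $\widehat U_1$-action on $V^\vee$ coming from the choice of $H$, agreement on $H_n$ (via triviality on $W^\vee$ in one direction, and via density of $U$ in $\widehat U_n$ plus the exponential in the other), and the conjugation relation by density and continuity. The only cosmetic difference is that you check the conjugation relation on $U$ rather than on $E[H_n]$; this is equally valid, since both are dense in $\widehat U_n$.
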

\begin{proof}
The case $W=0$ is immediate. Fix $n\geq1$ and first suppose that $c_H(W)\leq n$. Let $\widehat U_n$ act on $S$ through $V^\vee$, and let $H$ act diagonally. The $\widehat U_n$-action is continuous by the choice of $H$ and the inclusion $\widehat U_n\hookrightarrow\widehat U_1$. These actions agree on $H_n$, since $H_n$ acts trivially on $W^\vee$. The conjugation relation holds on the dense subalgebra $E[H_n]\subseteq\widehat U_n$, hence on $\widehat U_n$ by continuity. Proposition~\ref{prop2-1} therefore gives a continuous $D_{H,n}$-action.

Conversely, suppose that the diagonal action extends continuously to $D_{H,n}$. We adapt \cite[Lem. 6.6]{bib2} to the weighted completions using Proposition~\ref{prop2-1}. The resulting $\widehat U_n$-action agrees on $U$ with the action through $V^\vee$, since the smooth factor has zero Lie algebra action. Density and continuity imply agreement on $\widehat U_n$, and hence on its exponential subgroup $H_n$. Comparing with the diagonal action, and using $V\ne0$, shows that $H_n$ acts trivially on $W^\vee$. Thus $c_H(W)\leq n$.
\end{proof}

\begin{lemma}\label{lem2-5}
Let $\Gamma=H$ or $G_1$, and let $Q\ne0$ be a $\Gamma$-stable coadmissible subquotient of $M_{\an}$. Suppose $Q$ has canonical dimension zero. Then $\chi$ is algebraic and
\[
 Q'_b\simeq V\otimes_E\rho,\qquad
 Q_n\simeq V^\vee\otimes_E(\rho^{H_n})^\vee\qquad(n\geq1),
\]
where $\rho$ is a smooth admissible representation of $\Gamma$, equipped with its finest locally convex topology. The first isomorphism is topological and $\Gamma$-equivariant; the second is $D_{H,n}$-linear.
\end{lemma}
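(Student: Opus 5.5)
The plan is to work radius by radius and then glue the results using coadmissibility. Since $Q$ has canonical dimension zero, $d_{D_{H,n}}(Q_n)\leq d_{D_H}(Q)=0$ for every $n$. So each $Q_n$ is either zero or, by Lemma~\ref{lem2-2}, a finite-dimensional $E$-vector space carrying a continuous $D_{H,n}$-action.

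Restricting along $H\hookrightarrow D_{H,n}$ makes $Q_n$ a finite-dimensional locally analytic $H$-representation. The subquotient $Q$ of $M_{\an}$ inherits the infinitesimal character. By Lemma~\ref{lem2-3} and the definition of $\chi$, the center $Z(U(\mathfrak g_E))\subseteq U\subseteq D_{H,n}$ acts on $Q_n$ through the character $\chi$, up to the fixed sign and anti-automorphism conventions. The cited semisimplicity result \cite[Prop. 3.2]{bib21}, \cite[Lem. 6.5]{bib2} then gives the following. If $\chi$ is nonalgebraic, every $Q_n=0$, so $Q=\varprojlim Q_n=0$, a contradiction. Hence $\chi$ is algebraic, and $Q_n\simeq V^\vee\otimes_E W_n^\vee$ with $W_n$ a finite-dimensional smooth $H$-representation. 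By Lemma~\ref{lem2-4}, $c_H(W_n)\leq n$. Since $V$ is absolutely irreducible, the multiplicity space is recovered functorially as $W_n^\vee=\operatorname{Hom}_{\mathfrak g_E}(V^\vee,Q_n)$, with its induced $H$-action.

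Next I would identify the transition maps. The map $Q_{n+1}\to Q_n$ has dense image, and its target is finite-dimensional, so it is surjective. It is $D_{H,n+1}$-linear, so it comes from an $H$-equivariant surjection $W_{n+1}^\vee\twoheadrightarrow W_n^\vee$. I want this to be exactly the map dual to taking $H_n$-invariants, i.e.\ $W_n=W_{n+1}^{H_n}$. By the base-change identity $Q_n\simeq D_{H,n}\otimes_{D_{H,n+1}}Q_{n+1}$, the module $Q_n$ is the largest quotient of $Q_{n+1}$ on which the action extends continuously to $D_{H,n}$. By Lemma~\ref{lem2-4} and semisimplicity of the smooth factor (an $H$-representation factoring through a finite group in characteristic zero), this largest quotient is $V^\vee\otimes(W_{n+1}^{H_n})^\vee$. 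Some care is needed here: one must check that $D_{H,n}\otimes_{D_{H,n+1}}(V^\vee\otimes X^\vee)=0$ whenever $X^{H_n}=0$. This should follow by applying the same Lemma~\ref{lem2-4} argument to any nonzero finite-dimensional quotient of the tensor product.

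Now set $\rho=\varinjlim_n W_n$ along the dual inclusions $W_n=W_{n+1}^{H_n}\hookrightarrow W_{n+1}$. This is a smooth representation of $H$ with $\rho^{H_n}=W_n$ finite-dimensional, hence admissible. This gives $Q_n\simeq V^\vee\otimes(\rho^{H_n})^\vee$. For the dual, $Q'_b=(\varprojlim Q_n)'_b=\varinjlim (Q_n)'=\varinjlim V\otimes\rho^{H_n}=V\otimes\rho$, using that $Q$ is a Fréchet space which is a countable projective limit of finite-dimensional spaces. Such a limit is reflexive, and its strong dual carries the finest locally convex topology, which gives the topological statement.

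For $\Gamma=G_1$, the $G_1$-action on $Q$ transports to $Q'_b$. The algebraic factor $V$ is a $G_1$-representation, so I would define $\rho=\operatorname{Hom}_{\mathfrak g_E}(V,Q'_b)$ with the $G_1$-action $g\cdot\phi=g\circ\phi\circ g^{-1}$. The evaluation map $V\otimes\rho\to Q'_b$ is then $G_1$-equivariant, and it is an isomorphism by the $H$-case. Smoothness and admissibility of $\rho$ restrict from $H$. The main obstacle is the transition step, namely showing that the base change precisely cuts out $H_n$-invariants.
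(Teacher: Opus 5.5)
Your proof is correct, but it takes a different route from the paper's.

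\textbf{The paper's route.} It defines $\rho=\operatorname{Hom}_{\mathfrak g_E}(V,Q'_b)$ globally from the start. It then obtains the topological $\Gamma$-equivariant evaluation isomorphism $V\otimes_E\rho\simeq Q'_b$ directly from \cite[Prop. 6.5]{bib6} and \cite[Props. 4.2.4 and 4.2.10]{bib12}. Finally it identifies $\rho^{H_n}$ with $W_n=\operatorname{Hom}_{\mathfrak g_E}(V,Q_n^\vee)$ in a single step from $Q$ to $Q_n$:
\begin{itemize}
\item Lemma~\ref{lem2-4} gives $W_n\subseteq\rho^{H_n}$.
\item Conversely, for $w\in\rho^{H_n}$, the finite-dimensional space $W=E[H/H_n]w$ yields by duality a quotient $Q\twoheadrightarrow V^\vee\otimes_EW^\vee$. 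By Lemma~\ref{lem2-4} and the universal property of $D_{H,n}\otimes_{D_H}Q$, this quotient factors through $Q_n$.
\end{itemize}

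\textbf{Your route.} You work between consecutive radii. You use surjectivity of $Q_{n+1}\to Q_n$ and the universal property of $D_{H,n}\otimes_{D_{H,n+1}}Q_{n+1}$ to get $W_n=W_{n+1}^{H_n}$. You then build $\rho=\varinjlim_n W_n$, recover $\rho^{H_n}=W_n$ by iterating invariants, and only afterwards pass to $\Gamma=G_1$ via $\operatorname{Hom}_{\mathfrak g_E}(V,Q'_b)$.

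\textbf{Comparison.}
\begin{itemize}
\item Your version is more hands-on. In the $H$-case the topology on $Q'_b$ comes straight from $Q$ being a countable inverse limit of finite-dimensional spaces with surjective transition maps, rather than from Emerton's isotypic-decomposition results.
\item Your version makes the inverse system $(Q_n)$ explicit, which is exactly the structure Proposition~\ref{prop5-5} later exploits.
\item The paper's version treats $H$ and $G_1$ uniformly and avoids tracking transition maps.
\end{itemize}

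\textbf{The extra check you flag is not needed.} You do not separately need $D_{H,n}\otimes_{D_{H,n+1}}(V^\vee\otimes X^\vee)=0$ when $X^{H_n}=0$. Since $Q_n$ is itself a finite-dimensional quotient of $Q_{n+1}$ with a $D_{H,n}$-action, it is already pinned down as $V^\vee\otimes_E(W_{n+1}^\vee)_{H_n}$ by three facts:
\begin{itemize}
\item the two directions of Lemma~\ref{lem2-4};
\item semisimplicity of the smooth factor under the finite group $H/H_{n+1}$;
\item the fact that $D_{H,n+1}$-submodules of $V^\vee\otimes W_{n+1}^\vee$ are $\mathfrak g_E$-stable, hence of the form $V^\vee\otimes X$.
\end{itemize}
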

\begin{proof}
By Lemma~\ref{lem2-2}, every radius module $Q_n$ is finite-dimensional. By \cite[Prop. 6.5]{bib6}, $Q'_b\simeq\varinjlim_{n\geq1}Q_n^\vee$ with the finest locally convex topology. Some radius module $Q_n$ is nonzero, so \cite[Lem. 6.5]{bib2} shows that $\chi$ is algebraic. Applying the same lemma to each $Q_n^\vee$ shows that it is a direct sum of copies of $V$ as a $\mathfrak g_E$-module. Thus \cite[Props. 4.2.4 and 4.2.10]{bib12} give the topological $\Gamma$-equivariant evaluation isomorphism
\[
 V\otimes_E\rho\xrightarrow{\sim}Q'_b,\qquad
 \rho=\operatorname{Hom}_{\mathfrak g_E}(V,Q'_b),
\]
given by $v\otimes f\mapsto f(v)$. The action on $\rho$ is
\[
 (\gamma f)(v)=\gamma\bigl(f(\gamma^{-1}v)\bigr)
 \qquad(\gamma\in\Gamma,\ f\in\rho,\ v\in V).
\]
Since $V$ is finite-dimensional, each $f\in\rho$ has image in some
$Q_n^\vee$. The algebraic--smooth decomposition of $Q_n^\vee$ makes
$f$ fixed by an open subgroup of $H$. Thus $\rho$, with this action
and its finest locally convex topology, is a smooth $\Gamma$-representation.

The canonical map $Q\to Q_n$ has dense image, so its dual identifies $Q_n^\vee$ with a subspace of $Q'_b$. Put $W_n=\operatorname{Hom}_{\mathfrak g_E}(V,Q_n^\vee)\subseteq\rho$. Lemma~\ref{lem2-4} gives $W_n\subseteq\rho^{H_n}$. Conversely, for $w\in\rho^{H_n}$, put $W=E[H/H_n]w$. Duality gives a continuous quotient $f:Q\twoheadrightarrow V^\vee\otimes_E W^\vee$. Lemma~\ref{lem2-4} extends the action on its target to $D_{H,n}$, so the tensor-product universal property gives a surjection
\[
 Q_n=D_{H,n}\otimes_{D_H}Q\longrightarrow V^\vee\otimes_EW^\vee,
 \qquad a\otimes q\longmapsto a\,f(q).
\]
Dualizing gives $W\subseteq W_n$, hence $W_n=\rho^{H_n}$. This proves the formula for $Q_n$ and admissibility of $\rho$.
\end{proof}

The smooth factor $\rho$ in Lemma~\ref{lem2-5} may be infinite-dimensional,
even though $Q$ has canonical dimension zero.

\section{Restriction to an analytic torus}
\label{sec:analytic-torus}
Let $p>2$. We also assume that $E$ contains the unramified quadratic
extension of $F_0$ and square roots of all conjugates of $\pi_F$.

We follow \cite[\S\S6.2--6.7]{bib9}, writing
$F,E,\pi_F,\vartheta$ for $K,L,\varpi_K,\Pi$ there.
We use trace-zero lattices scaled by a sufficiently large power of $p$
and the weights specified in \eqref{eq3-4} below.

\subsection{The weighted subgroup and its torus}
\label{subsec:weighted-torus}
By Lemma~\ref{lem2-3}, we work with the admissible restriction
$\Pi|_{G_1}$.

Choose $a\in\mathcal O_{F_0}^\times$ such that
$\mathcal O_{F_0}=\Z_p[a]$ and its residue is a nonsquare. Such an $a$
can be chosen as the Teichm\"uller lift of a generator of the
multiplicative group of the residue field. For $G=D_F^\times$, write,
as in \cite[\S6.2]{bib9},
\[
 D_F=F\langle\vartheta,\sqrt a\rangle,\qquad
 \vartheta^2=\pi_F,\qquad(\sqrt a)^2=a,\qquad
 \vartheta\sqrt a=-\sqrt a\vartheta.
\]
For a sufficiently large integer $b\geq2$, take
\begin{equation}\label{eq3-1}
 \mathfrak h=p^b\bigl(\mathcal O_F\vartheta\oplus
 \mathcal O_F\sqrt a\,\vartheta\oplus\pi_F\mathcal O_F\sqrt a\bigr).
\end{equation}
For $G=\GL_2(F)$, put
\[
 e_+=\begin{pmatrix}0&1\\0&0\end{pmatrix},\qquad
 e_-=\begin{pmatrix}0&0\\1&0\end{pmatrix},\qquad
 h=\begin{pmatrix}1&0\\0&-1\end{pmatrix},
\]
and take
\begin{equation}\label{eq3-2}
 \mathfrak h=p^b\bigl(\mathcal O_Fe_+\oplus\pi_F\mathcal O_Fe_-
 \oplus\pi_F\mathcal O_Fh\bigr),\qquad\vartheta=e_++\pi_Fe_-.
\end{equation}
Set
\begin{equation}\label{eq3-3}
 H=\exp\mathfrak h,\qquad
 T=\exp(p^b\mathcal O_F\vartheta),\qquad T_n=T^{p^n}.
\end{equation}
The map $u\mapsto\exp(p^bu\vartheta)$ identifies
$(\mathcal O_F,+)$ with $T$ and $p^n\mathcal O_F$ with $T_n$.
Thus $T\simeq\Z_p^{d}$, $T_n\subseteq H_n$, and
$[T:T_n]=p^{dn}$.

Let $v_1,v_2,v_3$ be the ordered $\mathcal O_F$-basis of $\mathfrak h$ in
\eqref{eq3-1} or \eqref{eq3-2}, including the factor $p^b$.
As in \cite[\S6.5, (21)]{bib9}, use the following ordered basis of $H$:
\[
 q_{\ell,i,j}=\exp(a^i\pi_F^jv_\ell),
 \qquad 1\leq\ell\leq3,\quad0\leq i<f,\quad0\leq j<e.
\]
Denote the families by $(e_{i,j},f_{i,j},h_{i,j})$ for $\GL_2(F)$
and $(a_{i,j},b_{i,j},c_{i,j})$ for $D_F^\times$. Assign the weights
\begin{equation}\label{eq3-4}
\begin{gathered}
 w_*=\frac1{p-1}+\frac1{4e},\qquad
 w_j=w_*+\frac je,\qquad
 w'_j=w_*+\frac{j+1/2}e,\\
 \omega_H(q_{1,i,j})=\omega_H(q_{2,i,j})=w_j,\qquad
 \omega_H(q_{3,i,j})=w'_j.
\end{gathered}
\end{equation}
These satisfy $2w'_j=2w_j+1/e$ and lie in
$(1/(p-1),p/(p-1))$. For $b$ sufficiently large,
$J=\exp(p^{-1}\mathfrak h)$ is strictly saturated with the same
basis weights, and
\[
 H=J^p,\qquad\omega_H=\omega_J|_H-1.
\]
The results of Subsections~\ref{subsec:distribution-completions}
and~\ref{subsec:coadmissible-lattices} apply.

Write $D_{T,n}=D_{r_n}(T,E)$ with the restricted weights.

\subsection{Casimir operators and the associated graded module}
\label{subsec:casimir-residual}
Retain the lattice and filtrations of
Subsection~\ref{subsec:coadmissible-lattices}.
Let $\operatorname{Frob}$ be arithmetic Frobenius on
$F_0$ in the $\GL_2$ case and on $F_0(\sqrt a)$ otherwise. Put
$\tau_k=\operatorname{Frob}^k|_{F_0}$, and let $\beta_i$ be the
integral trace-dual basis to $a^i$, $0\leq i<f$.
Following \cite[Lem. 6.3]{bib9}, in the $\GL_2$ case set
\[
 \mathbf x_{k,j}
 =\sigma_{r_n}\left(\sum_{i=0}^{f-1}
       \operatorname{Frob}^k(\beta_i)(x_{i,j}-1)\right),
 \qquad x\in\{e,f,h\}.
\]
In the quaternion case define
\[
\begin{aligned}
 \mathbf w_{k,j}
 &=\frac12\sigma_{r_n}\left(
   \sum_{i=0}^{f-1}\left(
   \operatorname{Frob}^k(\beta_i)(a_{i,j}-1)
   +\operatorname{Frob}^k\left(\frac{\beta_i}{\sqrt a}\right)(b_{i,j}-1)
   \right)\right),\\
 \mathbf h_{\ell,j}
 &=\sigma_{r_n}\left(\sum_{i=0}^{f-1}
   \operatorname{Frob}^\ell\left(\frac{\beta_i}{\sqrt a}\right)(c_{i,j}-1)\right),
 \qquad 0\leq k<2f,\quad0\leq\ell<f.
\end{aligned}
\]
Here $0\leq j<e$, and in the $\GL_2$ case $0\leq k<f$.
We use $\mathbf x_{k,j}$ collectively for
$\mathbf e_{k,j},\mathbf f_{k,j},\mathbf h_{k,j}$ in the $\GL_2$ case
and $\mathbf w_{k,j},\mathbf w_{k+f,j},\mathbf h_{k,j}$ in the quaternion
case, with $0\leq k<f$.
The reductions $\overline{\mathbf x}_{k,j}$ modulo
$\eps=\sigma(\varpi)$ are independent of $n$ after rescaling the
grading \cite[Prop. 2.1 and Rem. 6.1]{bib9}, and
\[
 \bSH=\gr\overline\Lambda_H=
 \begin{cases}
 k_E[\overline{\mathbf e}_{k,j},\overline{\mathbf f}_{k,j},
     \overline{\mathbf h}_{k,j}:0\leq k<f,\ 0\leq j<e],
       &G=\GL_2(F),\\
 k_E[\overline{\mathbf w}_{k,j},\overline{\mathbf w}_{k+f,j},
     \overline{\mathbf h}_{k,j}:0\leq k<f,\ 0\leq j<e],
       &G=D_F^\times.
 \end{cases}
\]
The first two variables in each triple have weighted degree $w_j$,
and the third has degree $w'_j$.

For $0\leq k<f$ and $0\leq j<e$, put
\[
 u_{k,j}=
 \begin{cases}
 \overline{\mathbf e}_{k,j}+\overline{\mathbf f}_{k,j},
     &G=\GL_2(F),\\
 \overline{\mathbf w}_{k,j}+\overline{\mathbf w}_{k+f,j},
     &G=D_F^\times.
 \end{cases}
\]
Put $t_{i,j}=\exp(p^ba^i\pi_F^j\vartheta)$ and
$\overline{\mathbf t}_{i,j}=\sigma_{r_n}(t_{i,j}-1)\bmod\eps$,
for $0\leq i<f$ and $0\leq j<e$.
At $r_0=1/p$, the logarithm has the same symbol as $g-1$ for these
generators, since $w_j>1/(p-1)$
\cite[proof of Lem. 6.5]{bib9}.
Thus $\log t_{i,j}=\log e_{i,j}+\log f_{i,j}$ gives
\[
 \sigma_{r_0}(t_{i,j}-1)
 =\sigma_{r_0}(e_{i,j}-1)+\sigma_{r_0}(f_{i,j}-1)
\]
in the $\GL_2$ case.
In the quaternion case $t_{i,j}=a_{i,j}$, and
$\operatorname{Frob}^{f}(\sqrt a)=-\sqrt a$ makes the $b_{i,j}$
terms cancel in $\mathbf w_{k,j}+\mathbf w_{k+f,j}$.
After rescaling the degrees as in \eqref{eq2-4} and using the
trace-dual basis, we obtain
\[
\begin{aligned}
 u_{k,j}
 &=\sigma_{r_n}\left(\sum_{i=0}^{f-1}
   \operatorname{Frob}^k(\beta_i)(t_{i,j}-1)\right)\bmod\eps\\
 &=\sum_{i=0}^{f-1}\overline{\operatorname{Frob}^k(\beta_i)}
       \,\overline{\mathbf t}_{i,j},\\
 \overline{\mathbf t}_{i,j}
 &=\sum_{k=0}^{f-1}\overline{\tau_k(a^i)}\,u_{k,j}.
\end{aligned}
\]
The two inverse changes of basis above show that
\[
 \bST=\gr\overline\Lambda_T=k_E[u_{k,j}:0\leq k<f,\ 0\leq j<e]
 \subseteq\bSH.
\]
Let $P(t)$ be the Eisenstein polynomial of $\pi_F$ over $F_0$, and write
\[
 \frac{P(t)}{(t-\pi_F)P'(\pi_F)}
 =\sum_{j=0}^{e-1}\gamma_jt^j.
\]
Define $R_F\in\Z$ and, for $\rho:F\hookrightarrow E$, $\mu_{j,\rho}$ by
\[
 \frac{R_F}{e}=v_p(P'(\pi_F))+\frac1e-1,\qquad
 \mu_{j,\rho}=\rho(\pi_F^{R_F+j}\gamma_j).
\]
By \cite[Lem. 6.2]{bib9}, $v_p(\gamma_j)=-(R_F+j)/e$, so
$\mu_{j,\rho}\in\mathcal O_E^\times$; write $\overline\mu_{j,\rho}$
for its residue.

For each embedding $\rho:F\hookrightarrow E$, choose an
$\mathfrak{sl}_2(E)$-triple $(e_\rho,f_\rho,h_\rho)$ in the summand
$E\otimes_{F,\rho}\operatorname{Lie}_F(G_1)$ of the decomposition in
Subsection~\ref{subsec:conductors-radii}. In the $\GL_2$ case take the
images of the standard matrices $e_+,e_-,h$; in the quaternion case
use the normalization of \cite[\S6.3, (15)]{bib9}.
Here $\operatorname{Lie}_F(G_1)$ is the $F$-Lie algebra of $G_1$. Put
\[
 \Delta_\rho=\frac12h_\rho^2+e_\rho f_\rho+f_\rho e_\rho.
\]
The following polynomials occur in the calculation of the principal symbols of
the scaled Casimir operators \cite[proof of Thm. 6.2, (22)--(23)]{bib9}.
For $\rho:F\hookrightarrow E$ with $\rho|_{F_0}=\tau_k$, let $t$ be
a formal variable. In the $\GL_2$ case define
$Q_\rho(t)\in\bSH[t]$ by \cite[(22)]{bib9}:
\[
\begin{aligned}
 Q_\rho(t)
 &=\frac12\left(\sum_{j=0}^{e-1}\overline\mu_{e-1-j,\rho}
       \overline{\mathbf h}_{k,e-1-j}t^j\right)^2\\
 &\quad+2t\left(\sum_{j=0}^{e-1}\overline\mu_{e-1-j,\rho}
       \overline{\mathbf e}_{k,e-1-j}t^j\right)
       \left(\sum_{j=0}^{e-1}\overline\mu_{e-1-j,\rho}
       \overline{\mathbf f}_{k,e-1-j}t^j\right).
\end{aligned}
\]
In the quaternion case replace
$\overline{\mathbf e}_{k,j},\overline{\mathbf f}_{k,j}$ by
$\overline{\mathbf w}_{k,j},\overline{\mathbf w}_{k+f,j}$,
respectively, as in that formula.
For each $k$, choose an extension $\rho_k:F\hookrightarrow E$ of
$\tau_k$ and define the ideal $I$ as in
\cite[(23) and (24)]{bib9}:
\begin{equation}\label{eq3-6}
\begin{aligned}
 Q_\rho(t)&=\sum_{\ell=0}^{2e-1}\overline c_{\rho,\ell}t^\ell,\\
 I&=(\overline c_{\rho_k,\ell}:0\leq k<f,\ 0\leq\ell<2e)
       \subseteq\bSH.
\end{aligned}
\end{equation}
Each $\overline c_{\rho,\ell}$ is a quadratic polynomial in the
generators $\overline{\mathbf x}_{k,j}$ of $\overline S_H$, of weighted
degree $2w_*+(2e-1-\ell)/e$.
Since $\pi_F^{R_F+j}\gamma_j\in\mathcal O_F^\times$, its residue under
$\rho$ depends only on $\rho|_{F_0}$. Hence $I$ is independent of the
extensions $\rho_k$ \cite[Rem. 6.2]{bib9}.

In Proposition~\ref{prop3-3} below, we will combine the following
nilpotence result with Lemma~\ref{lem3-2} to prove that
$\gr\overline M$ is finite over $\bST$.

\begin{proposition}\label{prop3-1}
There is an integer $s\geq1$ such that $I^s\gr\overline M=0$
\cite[proof of Thm. 6.2, (24)]{bib9}.
\end{proposition}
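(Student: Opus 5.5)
The plan follows Sorgdrager's argument: turn the infinitesimal character into an annihilation statement on the residual graded module.

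\emph{Step 1: the scaled Casimirs act by scalars at every radius.} For each embedding $\rho:F\hookrightarrow E$, the Casimir $\Delta_\rho$ lies in $Z(U(\mathfrak g_E))$. So $\Delta_\rho-\chi(\Delta_\rho)$ annihilates $M_{\an}$, and hence annihilates each radius module $M_n$, since $M_n$ is a quotient of $D_{H,n}\otimes_{D_H}M_{\an}$. Following \cite[\S6.3]{bib9}, I would combine the $\Delta_\rho$ for the embeddings $\rho$ above a fixed $\tau_k$. The combining coefficients come from the trace-dual basis and from powers of $\pi_F$ weighted by the $\gamma_j$, and are scaled by $p^{2b}$ and suitable powers of $p$. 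The result is elements $C_{k,\ell}\in U\subseteq D_{H,n}$, together with scalars $c_{k,\ell}\in E$, such that $C_{k,\ell}-c_{k,\ell}$ kills $M_n$.

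\emph{Step 2: compute principal symbols.} Expressing $\log$ of each basis element through $b_i=g_i-1$ as in \cite[proof of Lem. 6.5]{bib9}, the principal symbol of $C_{k,\ell}$ in $\gr_{r_n}D_{H,n}$ should reduce modulo $\eps$ to $\overline c_{\rho_k,\ell}$, after the rescaling of \eqref{eq2-4}. This is \cite[(22)--(23)]{bib9}; the weights \eqref{eq3-4} are chosen exactly so that the degree $2w_*+(2e-1-\ell)/e$ matches. The scalar $c_{k,\ell}$ is fixed, while the degree of $C_{k,\ell}$ tends to $0$ at radius $r_n$ after rescaling by $p^n$. Hence for $n\ge n_0$ the scalar has strictly larger filtration degree, i.e. smaller norm, than $\|C_{k,\ell}\|_{r_n}$. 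So $\sigma(C_{k,\ell}-c_{k,\ell})=\sigma(C_{k,\ell})$. This degree comparison is the main obstacle I expect: the scalar must be subdominant uniformly, and the symbol computation must survive the cross terms with $p$-adically small coefficients.

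\emph{Step 3: pass to the graded module.} Take $n\ge\max(n_0,n_{\mathrm{res}})$. Since $(C_{k,\ell}-c_{k,\ell})M_n=0$, its symbol annihilates $\gr_{r_n}M_n=(\gr_{r_n}M^\circ)[1/\eps]$. However, multiplication on a non-strictly filtered module only gives that the symbol kills the graded module up to the correction of filtration. The right statement is that a good filtration yields annihilation by $\sigma(C)^N$ for some $N$: use finite generation of $\gr$ and the standard lemma that if $a$ kills $M$ then $\sigma(a)$ acts nilpotently on $\gr M$, as in \cite[Lem. 2.3]{bib9}. Then, since $\eps$ is regular on $\gr_{r_n}M^\circ$, intersect back to get $\sigma(C_{k,\ell})^N\gr_{r_n}M^\circ\subseteq\eps\gr_{r_n}M^\circ$ up to nilpotence. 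Reducing modulo $\eps$ via \eqref{eq2-4}, each $\overline c_{\rho_k,\ell}$ acts nilpotently on the finitely generated $\bSH$-module $\gr\overline M$. Since $I$ has finitely many generators, a common exponent gives $I^s\gr\overline M=0$.
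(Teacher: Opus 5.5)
Your Steps 1--2 have a genuine gap, at exactly the point you treat as routine: the claim that suitable combinations $C_{k,\ell}$ of Casimirs have symbols reducing to the individual coefficients $\overline c_{\rho_k,\ell}$. The variable $t$ in $Q_\rho(t)$ does not index $2e$ separately available symbols. In the symbol computation, $t$ stands for $\sigma(\rho(\pi_F))$, which is a unit of $k_E$ times a positive power of $\eps$.

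This is already visible for $F=\Q_p$. Put $v_1=p^be_+$, $v_2=p^{b+1}e_-$, $v_3=p^{b+1}h$. Then $p^{2b+2}\Delta=\tfrac12v_3^2+p(v_1v_2+v_2v_1)$. By \eqref{eq3-4}, both terms of $p^{2n+2b+2}\Delta$ have degree $2w_*+1$ at radius $r_n$. Since $\sigma_{r_n}(p^nv_i)=X_{i,n}^{p^n}$, the principal symbol is
\[
 \tfrac12\,\mathbf h^{2p^n}+2\,\sigma(p)\,\mathbf e^{p^n}\mathbf f^{p^n},
\]
which is $Q_\rho$ evaluated at $t=\sigma(p)$. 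Modulo $\eps$ only $\tfrac12\overline{\mathbf h}^{2p^n}$ survives, so the generator $\overline c_{\rho,1}=2\overline{\mathbf e}\,\overline{\mathbf f}$ of $I$ never appears. In general only the $t^0$-coefficient $\overline c_{\rho,0}$, Frobenius-twisted and raised to the $p^n$th power, survives reduction.

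Linear recombination cannot repair this. Let $V\subseteq D_{H,n}$ be an $N$-dimensional $E$-subspace. Then $\gr V$ is free of rank $N$ over $\gr E=k_E[\eps^{\pm1}]$. Rescale the symbols of elements of $V$ by powers of $\eps$ so that they lie in $\gr\Lambda_H^\circ\setminus\eps\gr\Lambda_H^\circ$; their reductions modulo $\eps$ then span at most $N$ dimensions over $k_E$. The Casimirs span only $d$ dimensions. An ideal generated by at most $d$ quadrics has quotient of Krull dimension at least $2d$, so its radical cannot contain $I$, because $\bSH/I$ has dimension $d$ by Lemma~\ref{lem3-2}. For $F=\Q_p$ there is a single Casimir, so your $C_{0,0}$ and $C_{0,1}$ cannot both exist.

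Step 3 cannot compensate either. The module $Y=k_E[\eps,X_1,X_2,X_3]/(\tfrac12X_3^{2q}+2\sigma(p)X_1^qX_2^q)$ satisfies everything your argument uses: it is killed by the symbol, and $\eps$ is regular on it. Yet $X_1X_2$ is not nilpotent on $Y/\eps Y=k_E[X_1,X_2,X_3]/(X_3^{2q})$.

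So your route proves only nilpotence of the $t^0$-coefficients, which bounds the support by dimension $2d$, not $d$. What is missing is a mechanism that extracts the coefficients of the positive powers of $t$ from the relation $Q_\rho(\sigma(\rho(\pi_F)))\,\gr_{r_n}M^\circ=0$. Such a mechanism must use more about $M$ than one annihilating element per radius, for instance that $\gr\overline M$ is independent of $n\geq n_{\mathrm{res}}$. That is exactly what the paper imports from \cite[(24)]{bib9} rather than reproving.

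A smaller point: if $aM_n=0$, then $\sigma(a)$ already kills $\gr M_n$, so no nilpotence lemma is needed in Step 3. The only exponent in your approach comes from the $p^n$th powers.
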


\subsection{Finiteness and dimension comparison}
\label{subsec:torus-finiteness}
The following lemma refines the dimension estimate of
\cite[Lem. 6.6]{bib9} by using the torus subalgebra $\bST$.
\begin{lemma}\label{lem3-2}
$\bSH/I$ is a finite free $\bST$-module of rank $4^{d}$.
\end{lemma}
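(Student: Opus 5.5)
The plan is to split by Frobenius index, so that the statement reduces to a graded complete-intersection computation in $3e$ variables for each $k$. The key input is that the fibre over $u=0$ is a single point, which comes from a parity argument on $Q_\rho(t)$.

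\textbf{Reduction to a single $k$.} The generators of $\bSH$ split into the blocks $\{\overline{\mathbf x}_{k,j}:0\leq j<e\}$ for $0\leq k<f$. Each block consists of $3e$ variables. The coefficients $\overline c_{\rho_k,\ell}$ involve only the $k$-th block, and $u_{k,j}$ lies in the $k$-th block. Hence
\[
 \bSH=\bigotimes_k S_k,\qquad \bSH/I\simeq\bigotimes_k S_k/I_k,\qquad \bST=\bigotimes_k A_k,
\]
where $S_k$ is the polynomial ring on the $k$-th block, $I_k=(\overline c_{\rho_k,\ell}:0\leq\ell<2e)$, and $A_k=k_E[u_{k,0},\ldots,u_{k,e-1}]$. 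A tensor product of finite free modules is finite free, with rank the product of the ranks. So it suffices to show that $S_k/I_k$ is finite free of rank $4^e$ over $A_k$.

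\textbf{Change of variables.} Replace $\overline{\mathbf e}_{k,j}$ by $u_{k,j}-\overline{\mathbf f}_{k,j}$; in the quaternion case use the $\overline{\mathbf w}$'s instead. This is homogeneous, because $u_{k,j}$ and $\overline{\mathbf f}_{k,j}$ both have degree $w_j$. It gives $S_k=A_k[\overline{\mathbf f}_{k,j},\overline{\mathbf h}_{k,j}]$, a polynomial ring in $2e$ further variables over $A_k$. All degrees are positive, and after rescaling they are integral.

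\textbf{Key step: the fibre over $u=0$.} I expect this to be the main obstacle. We show that $S_k/(I_k+(u_{k,\bullet}))$ is finite-dimensional over $k_E$, and by the graded Nullstellensatz it suffices to show that its only $\overline{k_E}$-point is the origin. Setting $u=0$ forces $\overline{\mathbf e}=-\overline{\mathbf f}$. Put
\[
 \mathcal H(t)=\sum_j\overline\mu_{e-1-j}\overline{\mathbf h}_{k,e-1-j}t^j,\qquad
 \mathcal F(t)=\sum_j\overline\mu_{e-1-j}\overline{\mathbf f}_{k,e-1-j}t^j .
\]
Then $Q_{\rho_k}(t)=\tfrac12\mathcal H(t)^2-2t\,\mathcal F(t)^2$, and a point of the fibre makes this vanish identically in $t$.

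If $\mathcal F\neq0$, then $t\mathcal F^2$ has odd degree while $\mathcal H^2$ is zero or has even degree. The leading term of $t\mathcal F^2$ therefore cannot cancel; here $p>2$ is used. Hence $\mathcal F=0$, and then $\mathcal H^2=0$, so $\mathcal H=0$. Since the $\overline\mu_{j,\rho}$ are units, all $\overline{\mathbf f}_{k,j}$ and $\overline{\mathbf h}_{k,j}$ vanish at the point. The quaternion case is identical with $\overline{\mathbf w}_{k,j},\overline{\mathbf w}_{k+f,j}$ in place of $\overline{\mathbf e}_{k,j},\overline{\mathbf f}_{k,j}$.

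\textbf{Commutative algebra.} The $3e$ homogeneous elements $u_{k,\bullet},\overline c_{\rho_k,\bullet}$ generate an ideal of height $3e$ in the $3e$-variable polynomial ring $S_k$, so they form a regular sequence. It follows that $S_k/I_k$ is graded Cohen--Macaulay of dimension $e$. By graded Nakayama it is finite over $A_k$. Since $A_k$ is regular of the same dimension, the Auslander--Buchsbaum formula (miracle flatness) shows that $S_k/I_k$ is free over $A_k$. Its rank equals $\dim_{k_E}S_k/(I_k+(u))$.

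\textbf{The rank.} We compute this dimension from the Hilbert series of a homogeneous complete intersection, evaluated at $t=1$:
\[
 \prod_{\ell=0}^{2e-1}\deg\overline c_{\rho_k,\ell}\Big/\prod_j w_jw'_j .
\]
The degrees $2w_*+m/e$ for $0\leq m<2e$ are exactly $\{2w_j\}\cup\{2w_j+1/e\}=\{2w_j\}\cup\{2w'_j\}$, using $2w'_j=2w_j+1/e$. The quotient is therefore $2^{2e}=4^e$. Taking the product over $k$ gives rank $4^{ef}=4^d$.
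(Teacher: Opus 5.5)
Your proof is correct and follows the paper's argument. The parity argument shows that the fibre over $u=0$ is the origin, so the quadrics $\overline c_{\rho_k,\ell}$ and the $u_{k,j}$ form a homogeneous system of parameters, hence a regular sequence; this gives freeness, and the rank is read off from the Hilbert series. The differences are cosmetic: you split into blocks by $k$ and use the weighted grading, which forces you to check the degree identity $\{2w_*+m/e\}=\{2w_j\}\cup\{2w'_j\}$. The paper instead gives every variable degree one, so the $\overline c_{\rho_k,\ell}$ are ordinary quadrics and the rank $4^{d}$ comes directly from $(1+q)^{2d}$ at $q=1$.
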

\begin{proof}
Give every variable of $\bSH$ ordinary degree one.
Put $A=\bSH/I$. For each $k$, write $H_k(t),E_k(t),F_k(t)$ for the
three sums in $Q_{\rho_k}(t)$, so that
$Q_{\rho_k}(t)=\frac12H_k(t)^2+2tE_k(t)F_k(t)$.
At a common geometric zero of the $\overline c_{\rho_k,\ell}$ and
$u_{k,j}$, we have $F_k(t)=-E_k(t)$, and \eqref{eq3-6} gives
\[
 H_k(t)^2=4tE_k(t)^2.
\]
If $E_k\ne0$, the two sides have orders of vanishing at $t=0$ of
opposite parity.
Thus $H_k=E_k=F_k=0$, and the unit coefficients
$\overline\mu_{j,\rho_k}$ force all coordinates to vanish.
Hence $A/(u_{k,j})$ is finite-dimensional.

The $2d$ quadrics $\overline c_{\rho_k,\ell}$ and the $d$
linear forms $u_{k,j}$ are therefore homogeneous parameters in the
Cohen--Macaulay ring $\bSH$. By homogeneity and
\cite[Lem. 10.104.2]{bib14} at the homogeneous maximal ideal, they
form a regular sequence. The Hilbert series are
\cite[\S\S10.56 and 10.58]{bib14}
\[
 H_A(q)=\frac{(1+q)^{2d}}{(1-q)^{d}},\qquad
 H_{A/(u_{k,j})}(q)=(1+q)^{2d}.
\]
Lifting a homogeneous basis of $A/(u_{k,j})$, with degrees $a_\ell$,
gives a surjection $\bigoplus_\ell\bST(-a_\ell)\twoheadrightarrow A$
by induction on degree. The Hilbert series of the source is
\[
\begin{aligned}
 H_{\bigoplus_\ell\bST(-a_\ell)}(q)
 &=H_{\bST}(q)\sum_\ell q^{a_\ell}\\
 &=\frac{H_{A/(u_{k,j})}(q)}{(1-q)^{d}}\\
 &=\frac{(1+q)^{2d}}{(1-q)^{d}}=H_A(q).
\end{aligned}
\]
Both sides have the same Hilbert series, so the surjection is an isomorphism. Thus $A$ is free over $\bST$ of rank $H_{A/(u_{k,j})}(1)=4^{d}$.
\end{proof}

\begin{proposition}\label{prop3-3}
Put $n_*=n_{\mathrm{res}}$ from \eqref{eq2-4}. Then $M$ is finite over
$\Lambda_T$ and, for every $n\geq n_*$, the natural map
\[
 D_{T,n}\otimes_{\Lambda_T}M\longrightarrow
 D_{H,n}\otimes_{\Lambda_H}M=M_n
\]
is an isomorphism. There is a positive integer $m=m(H,M)$ such that
every $M_n$, $n\geq n_*$, has at most $m$ generators over $D_{T,n}$.
\end{proposition}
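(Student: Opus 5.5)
Combining Proposition~\ref{prop3-1} with Lemma~\ref{lem3-2}, we first show that $\gr\overline M$ is finite over $\bST$. Since $\gr\overline M$ is a finite graded $\bSH$-module killed by $I^s$, it has a finite filtration $I^k\gr\overline M/I^{k+1}\gr\overline M$ ($0\le k<s$). Each graded piece is a finite graded module over $\bSH/I$, which is finite free over $\bST$ by Lemma~\ref{lem3-2}. Hence $\gr\overline M$ is a finite graded $\bST$-module, generated by homogeneous elements $\bar y_1,\ldots,\bar y_m$; here $m$ depends only on $H$ and $M$, since the graded module $\gr\overline M$ is independent of the radius.

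Next we pass to each radius $n\geq n_*$ via \eqref{eq2-4}. The elements $\bar y_i$ are reductions modulo $\eps$ of homogeneous elements of $\gr_{r_n}M^\circ$. By the graded Nakayama lemma over $\gr_{r_n}\Lambda_T^\circ=k_E[\eps,u]$, these lifts generate $\gr_{r_n}M^\circ$ as a module over $\gr_{r_n}\Lambda_T^\circ$; the grading is positive and bounded below after rescaling, and $\eps$ has positive degree. Choose $y_i\in M^\circ$ with principal symbols equal to these lifts. The filtration on $M^\circ$ is complete and Zariskian, so the standard filtered lifting argument (successive approximation) shows that the $y_i$ generate $M^\circ$ over the completion of $\Lambda_T^\circ$ for $\|\cdot\|_{r_n}$. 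Instead we argue at the level of $\Lambda_T^\circ$ itself, using the $\omega$-weighted filtration on $k_E[\![T]\!]$: the $\bar y_i$ generate $\gr\overline M$ over $\bST=\gr\overline\Lambda_T$, and this filtration is complete. Hence the $y_i$ generate $\overline M$ over $k_E[\![T]\!]$, and by topological Nakayama (with $M^\circ$ $\varpi$-adically complete and finite over $\Lambda_H^\circ$) they generate $M^\circ$ over $\Lambda_T^\circ$. Thus $M$ is finite over $\Lambda_T$, with at most $m$ generators.

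For the base-change isomorphism, we compare filtrations. Inverting $\eps$ in the surjection $\gr_{r_n}\Lambda_T^\circ{}^{\oplus m}\twoheadrightarrow\gr_{r_n}M^\circ$ and applying \eqref{eq2-4} gives a surjection $\gr_{r_n}D_{T,n}^{\oplus m}\to\gr_{r_n}M_n$. Completeness then shows that $D_{T,n}^{\oplus m}\to M_n$, $e_i\mapsto1\otimes y_i$, is surjective. Hence the map $\phi:D_{T,n}\otimes_{\Lambda_T}M\to M_n$ is surjective, and $M_n$ has at most $m$ generators over $D_{T,n}$.

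The main obstacle is injectivity of $\phi$. Our plan is to compare both sides as filtered modules. Give $D_{T,n}\otimes_{\Lambda_T}M$ the quotient filtration from a presentation of $M^\circ$ over $\Lambda_T^\circ$, whose graded module (by the analogue of \eqref{eq2-4} for $T$, from \cite[Prop. 5.1]{bib9}, after possibly enlarging $n_{\mathrm{res}}$) is $(\gr_{r_n}M^\circ)_T[1/\eps]$. Here $(\gr_{r_n}M^\circ)_T$ denotes the graded module of $M^\circ$ for the $T$-quotient filtration. We then need to show that the $T$-quotient filtration and the $H$-quotient filtration on $M^\circ$ induce the same graded module. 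Both have reduction $\gr\overline M$ modulo the regular element $\eps$: on the $H$-side this is \eqref{eq2-4}, and on the $T$-side it follows from the generation argument above. The identity map induces a graded surjection between $\eps$-torsion-free modules with the same reduction modulo $\eps$, so it is an isomorphism. Hence $\gr\phi$ is an isomorphism, and so $\phi$ is an isomorphism, since the filtrations are complete and separated. If the $T$-analogue of \eqref{eq2-4} requires a larger threshold than $n_*$, we would handle this by showing directly that the kernel of $\phi$ has zero graded module, using flatness of $\Lambda_T\to D_{T,n}$.
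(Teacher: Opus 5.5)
Your finiteness argument ($I$-adic filtration plus Lemma~\ref{lem3-2}, lifting homogeneous generators through \eqref{eq2-4}, and completeness) matches the paper. So does your proof that $D_{T,n}^m\to M_n$ is surjective. Both steps tacitly use $\operatorname{Fil}^s_{r_n}\Lambda_T^\circ=\Lambda_T^\circ\cap\operatorname{Fil}^s_{r_n}\Lambda_H^\circ$. The paper obtains this by extending the ordered basis of $T$ to one of $H$ with the same weights; you should say so.

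\textbf{The gap is in the injectivity step.} You filter $D_{T,n}\otimes_{\Lambda_T}M$ through an unspecified $\Lambda_T^\circ$-presentation of $M^\circ$. You then need a $T$-version of \eqref{eq2-4}: that the resulting graded module of $M^\circ$ is $\eps$-torsion-free with reduction $\gr\overline M$. This is not proved. Your generation argument shows only that certain symbols generate; it gives neither $\eps$-regularity nor the size of the reduction. You also concede that this may need a threshold larger than $n_*$, and the fallback (``show the kernel has zero graded module using flatness'') is not an argument. As written, you get the isomorphism at best for $n$ beyond an unknown threshold, not for every $n\geq n_*$ as stated.

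\textbf{The missing observation} is that no $T$-version of \eqref{eq2-4} is needed if you use the specific presentation
$\phi_{T,n}\colon\bigoplus_j\Lambda_T^\circ(-d_{j,n})\to M^\circ$, $e_j\mapsto y_j$, with the degree shifts $d_{j,n}$ of the lifted symbols.
\begin{itemize}
\item This map is filtered and $\gr\phi_{T,n}$ is surjective. The source is complete and $M^\circ$ is separated, with discrete filtrations. So the successive-approximation argument you already invoke makes $\phi_{T,n}$ a \emph{strict} surjection.
\item Hence the quotient filtration it induces on $M^\circ$ is exactly the given $H$-filtration, and no separate comparison of graded modules is required.
\item After inverting $p$, the $T$- and $H$-presentations built from the same generators induce the same filtration $\operatorname{Fil}_n^\bullet M$ on $M$.
\item Both $D_{T,n}\otimes_{\Lambda_T}M$ and $M_n=D_{H,n}\otimes_{\Lambda_H}M$ are then the completion of $M$ for this single filtration, compatibly with the identity on $M$. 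The paper cites \cite[Lem. 2.1]{bib9} for this.
\end{itemize}
This gives the isomorphism for every $n\geq n_*$. With this choice of presentation, your graded comparison of $\gr\phi$ also goes through, because the $T$-side graded module of $M^\circ$ is literally the $H$-side one.
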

\begin{proof}
The ordered basis $(t_{i,j})$ of $T$ extends to one of $H$,
replacing $v_1$ by
$v_1+v_2$ in the $\GL_2$ case and preserving the weights.
The ordered expansion formula
\cite[discussion following Thm. 4.10]{bib6} gives
\begin{equation}\label{eq3-5}
\begin{aligned}
 \operatorname{Fil}_{r_n}^s\Lambda_T
 &=\Lambda_T\cap\operatorname{Fil}_{r_n}^s\Lambda_H,\\
 \operatorname{Fil}_{r_n}^s\Lambda_T^\circ
 &=\Lambda_T^\circ\cap\operatorname{Fil}_{r_n}^s\Lambda_H^\circ
\end{aligned}
\end{equation}
for all $n\geq0$ and $s$, and hence the filtered embedding
$\overline\Lambda_T\hookrightarrow\overline\Lambda_H$.

By Proposition~\ref{prop3-1}, the $I$-adic filtration of $\gr\overline M$ has finitely many nonzero
successive quotients. Each
$I^a\gr\overline M/I^{a+1}\gr\overline M$ is finite over $\bSH/I$,
hence over $\bST$ by Lemma~\ref{lem3-2}. Thus $\gr\overline M$ is
finite over $\bST$.
Choose a nonempty finite family of homogeneous $\overline S_T$-generators
$\overline m_1,\ldots,\overline m_m$ of $\gr\overline M$.
Fix $n\geq n_*$, write $\operatorname{Fil}_n$ for the filtrations
at radius $r_n$, and lift these generators to homogeneous
$\widetilde m_{j,n}\in\gr_{r_n}M^\circ$ of degrees $d_{j,n}$, then to
$m_{j,n}\in M^\circ$.
By \eqref{eq2-4}, the $\widetilde m_{j,n}$ generate $\gr_{r_n}M^\circ$ over
both $\overline S_T[\eps]$ and $\overline S_H[\eps]$: the cokernels
vanish modulo $\eps$, have gradings bounded below, and $\deg\eps>0$.
Completeness of the filtrations with positive weights gives strict surjections
\[
 \phi_{K,n}:\bigoplus_{j=1}^m\Lambda_K^\circ(-d_{j,n})
 \twoheadrightarrow M^\circ,\qquad
 (a_j)_j\longmapsto\sum_ja_jm_{j,n},\quad K=T,H.
\]
By \eqref{eq3-5} and \cite[Lem. 2.2(ii)]{bib9}, the maps $\phi_{T,n}[1/p]$ and
$\phi_{H,n}[1/p]$ induce the same filtration on $M$:
\[
 \operatorname{Fil}_n^sM
 =\sum_j\operatorname{Fil}_n^{s-d_{j,n}}\Lambda_Tm_{j,n}
 =\sum_j\operatorname{Fil}_n^{s-d_{j,n}}\Lambda_Hm_{j,n}.
\]
Thus $M$ is finite over $\Lambda_T$. Let $\widehat M_n$ be the completion of $M$ for this filtration.
By \cite[Lem. 2.1]{bib9}, the following maps are isomorphisms:
\begin{equation}\label{eq3-8}
 D_{T,n}\otimes_{\Lambda_T}M\xrightarrow{\sim}\widehat M_n
 \xleftarrow{\sim}D_{H,n}\otimes_{\Lambda_H}M=M_n
\end{equation}
Both maps extend the identity on $M$.
Tensoring $\phi_{T,n}[1/p]$ gives $D_{T,n}^m\twoheadrightarrow M_n$.
Here $m$ is the number of the chosen homogeneous generators of
$\gr\overline M$ over $\overline S_T$, so the bound is uniform in $n$.
\end{proof}

\begin{corollary}\label{cor3-4}
For the analytic torus $T$ of \eqref{eq3-3}, the restriction $\Pi|_T$ is an admissible Banach representation, and the natural inclusions identify
\[
 (\Pi|_T)^{T\text{-}\an}=(\Pi|_H)^{H\text{-}\an}=\Pi^{G\text{-}\an}
\]
as locally convex $T$-representations.
\end{corollary}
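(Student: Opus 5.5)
The plan is to split the statement into an admissibility claim and a comparison of locally analytic vectors, handling the second through the duality isomorphism \eqref{eq2-3} and the radius-wise isomorphisms of Proposition~\ref{prop3-3}. By Lemma~\ref{lem2-3}, $\Pi|_{G_1}$, hence $\Pi|_H$, is admissible. Moreover $(\Pi|_{G_1})^{\an}=\Pi^{G\text{-}\an}$ topologically, and since $H$ is open in $G_1$, \cite[Lem. 3.5.4]{bib12} gives $(\Pi|_H)^{H\text{-}\an}=\Pi^{G\text{-}\an}$. So only the equality between the $T$-analytic and $H$-analytic vectors remains. Admissibility of $\Pi|_T$ is immediate from Proposition~\ref{prop3-3}: $M=\Pi'$ is finite over $\Lambda_T$. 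As $T$ is compact, this is exactly the admissibility criterion recalled in Subsection~\ref{subsec:coadmissible-lattices}.

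For the comparison, I would dualize. Every $H$-analytic vector is $T$-analytic by restriction of orbit maps, so there is a continuous $T$-equivariant inclusion
\[
 \iota:(\Pi|_H)^{H\text{-}\an}\hookrightarrow(\Pi|_T)^{T\text{-}\an}.
\]
Applying \eqref{eq2-3} to both groups identifies the strong duals with $D_H\otimes_{\Lambda_H}M\simeq\varprojlim_nM_n$ and $D_T\otimes_{\Lambda_T}M\simeq\varprojlim_nD_{T,n}\otimes_{\Lambda_T}M$. The dual of $\iota$ is the natural map $\varprojlim_nD_{T,n}\otimes_{\Lambda_T}M\to\varprojlim_nM_n$ extending the identity on $M$. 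Here one uses that $(D_{T,n})_n$ is a Fr\'echet--Stein presentation of $D_T$ for the restricted weights. This follows as in Subsection~\ref{subsec:distribution-completions}, since the restricted weights still satisfy the $p$-power setup: $T=\exp(p^b\mathcal O_F\vartheta)$ is $(T')^p$ with $T'=\exp(p^{b-1}\mathcal O_F\vartheta)$.

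By Proposition~\ref{prop3-3}, this map is an isomorphism at every $n\geq n_*$, compatibly with the transition maps, because both sides are identified with $\widehat M_n$ via maps extending the identity on $M$. Hence it is a topological isomorphism of coadmissible modules, being an isomorphism of Fr\'echet inverse limits. Reflexivity of the spaces of compact type, \cite[Thm. 6.3]{bib6} applied to coadmissible modules, then shows that $\iota$ is bijective and a topological isomorphism. Concretely, $\iota$ is the dual of a topological isomorphism between the strong duals.

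The main obstacle is the step identifying $\iota$ with the dual of the natural map, and checking compatibility across radii. One must confirm that the isomorphisms \eqref{eq3-8} commute with $D_{T,n+1}\to D_{T,n}$ and $D_{H,n+1}\to D_{H,n}$. I would argue via density: all maps extend the identity on $M$, the targets are Hausdorff, and $M$ is dense in each $\widehat M_n$. I would also verify that the $T$-action on $(\Pi|_T)^{T\text{-}\an}$ agrees with the restricted one, which is clear from the construction.
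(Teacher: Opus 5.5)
Your proposal is correct and follows the paper's proof: admissibility from the finiteness of $M$ over $\Lambda_T$ in Proposition~\ref{prop3-3}, then the radius-wise isomorphisms \eqref{eq3-8}, which extend the identity on $M$ and are compatible with the transition maps, give $D(T,E)\otimes_{\Lambda_T}M\simeq D(H,E)\otimes_{\Lambda_H}M$ via \eqref{eq2-3}, followed by dualizing and Lemma~\ref{lem2-3}. Your extra steps only spell out what the paper leaves implicit: the Fr\'echet--Stein presentation for $T$, identifying the dual of the inclusion $\iota$ with the natural map, and using reflexivity to conclude that $\iota$ is a topological isomorphism.
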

\begin{proof}
Proposition~\ref{prop3-3} gives admissibility.
The isomorphisms \eqref{eq3-8} extend the identity on $M$ and commute
with transition maps, so \eqref{eq2-3} gives
\[
 D(T,E)\otimes_{\Lambda_T}M\simeq D(H,E)\otimes_{\Lambda_H}M
\]
as strong duals. Dualizing and applying Lemma~\ref{lem2-3} gives
the asserted identifications.
\end{proof}

\begin{corollary}\label{cor3-5}
Every coadmissible $D(H,E)$-subquotient $Q$ of $M_{\an}$ is coadmissible over $D(T,E)$. For every $n\geq n_*$, its radius modules are canonically identified by
\[
 D_{T,n}\otimes_{D(T,E)}Q\simeq D_{H,n}\otimes_{D(H,E)}Q=Q_n.
\]
In particular, $D_{T,n}\otimes_{D_{T,n+1}}Q_{n+1}\simeq Q_n$.
\end{corollary}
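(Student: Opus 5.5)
The plan is to deduce the statement from Proposition~\ref{prop3-3} and the exactness properties of coadmissible modules over Fr\'echet--Stein algebras. First I will treat $M_{\an}$ itself. By \eqref{eq2-3} and Corollary~\ref{cor3-4}, $M_{\an}\simeq D(T,E)\otimes_{\Lambda_T}M$. Since $M$ is finite over $\Lambda_T$ by Proposition~\ref{prop3-3}, this module is finitely presented over $D(T,E)$, hence coadmissible over $D(T,E)$ by \cite[Cor. 3.4(v)]{bib6}. Its radius modules are $D_{T,n}\otimes_{\Lambda_T}M$, and for $n\geq n_*$ the isomorphism of Proposition~\ref{prop3-3} identifies them with $M_n$. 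The isomorphisms \eqref{eq3-8} extend the identity on $M$, so they are compatible with the transition maps.

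Next I will pass to subquotients. Let $Q=Q_1/Q_2$ with $Q_2\subseteq Q_1\subseteq M_{\an}$ coadmissible over $D(H,E)$. By \cite[Cor. 3.4]{bib6}, coadmissible submodules are closed, and $Q_1\simeq\varprojlim_n(Q_1)_n$, where each $(Q_1)_n$ is a $D_{H,n}$-submodule of $M_n$ by exactness of $Q\mapsto Q_n$. The key step is to show that $Q_1$ is coadmissible over $D(T,E)$, with $D_{T,n}\otimes_{D(T,E)}Q_1\simeq(Q_1)_n$. For $n\geq n_*$, each $(Q_1)_n$ is a $D_{H,n}$-submodule of $M_n$, which is finitely generated over $D_{T,n}$. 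Since $D_{T,n}$ is Noetherian, $(Q_1)_n$ is a finitely generated $D_{T,n}$-module. The transition maps satisfy $D_{H,n}\otimes_{D_{H,n+1}}(Q_1)_{n+1}\simeq(Q_1)_n$. To obtain a coherent sheaf over $(D_{T,n})_n$, I also need $D_{T,n}\otimes_{D_{T,n+1}}(Q_1)_{n+1}\simeq(Q_1)_n$.

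I expect this compatibility to be the main obstacle. The approach I would try first is the following. Let $N=\varprojlim(Q_1)_n$, regarded as a $D(T,E)$-module. Take the $D(T,E)$-coadmissible submodule $N'$ of $M_{\an}$ generated in the Fr\'echet--Stein sense, that is, the closure of the images. Then compare radius modules using flatness of $D_{T,n+1}\to D_{T,n}$ and the isomorphism $D_{T,n}\otimes_{D_{T,n+1}}M_{n+1}\simeq M_n$. Concretely, the natural map $D_{T,n}\otimes_{D_{T,n+1}}(Q_1)_{n+1}\to M_n$ is injective by flatness, and its image is the $D_{T,n}$-span of $(Q_1)_{n+1}$ inside $M_n$. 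That image is contained in $(Q_1)_n$. It contains the image of $(Q_1)_{n+1}$, which is dense in $(Q_1)_n$ because $D_{H,n+1}\to D_{H,n}$ has dense image. Finitely generated submodules of finite $D_{T,n}$-modules are closed in the canonical Banach topology, so the image equals $(Q_1)_n$. This yields the required isomorphisms, so $Q_1\simeq\varprojlim(Q_1)_n$ is coadmissible over $D(T,E)$. The same argument applies to $Q_2$.

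Finally, coadmissible $D(T,E)$-modules form an abelian category closed under cokernels, and $Q\mapsto D_{T,n}\otimes_{D(T,E)}Q$ is exact. Hence $Q=Q_1/Q_2$ is coadmissible over $D(T,E)$ and
\[
 D_{T,n}\otimes_{D(T,E)}Q\simeq(Q_1)_n/(Q_2)_n\simeq Q_n\qquad(n\geq n_*).
\]
The identification is canonical because every map involved extends the identity on $M$. The last assertion $D_{T,n}\otimes_{D_{T,n+1}}Q_{n+1}\simeq Q_n$ then follows from the transition identities of coadmissible $D(T,E)$-modules.
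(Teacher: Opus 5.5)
Your proposal is correct, but it reaches coadmissibility over $D(T,E)$ by a different route from the paper.

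\textbf{The paper's route.} The paper argues globally. Corollary~\ref{cor3-4} shows that the $D(T,E)$- and $D(H,E)$-structures on $M_{\an}$ define the same Fr\'echet topology. A coadmissible $D(H,E)$-submodule $L$ is closed, so it is a closed $D(T,E)$-submodule and hence coadmissible by \cite[Lem. 3.6]{bib6}. Only afterwards does the paper compare the radius modules $D_{T,n}\otimes_{D(T,E)}L$ and $L_n$. Both are viewed as finite $D_{T,n}$-submodules of $M_n$, hence closed, and each contains the image of $L$ as a dense subset.

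\textbf{Your route.} You work radius by radius instead. Flatness of $D_{T,n+1}\to D_{T,n}$ and the isomorphism $D_{T,n}\otimes_{D_{T,n+1}}M_{n+1}\simeq M_n$ (from Proposition~\ref{prop3-3}) embed $D_{T,n}\otimes_{D_{T,n+1}}(Q_1)_{n+1}$ in $M_n$. Density of the image of $(Q_1)_{n+1}$ in $(Q_1)_n$, together with closedness of finite $D_{T,n}$-submodules, then shows that $((Q_1)_n)_{n\geq n_*}$ is a coherent sheaf for $(D_{T,n})$. Coadmissibility of $Q_1$ and the identification $D_{T,n}\otimes_{D(T,E)}Q_1\simeq(Q_1)_n$ then follow together from the basic theory of coherent sheaves \cite[\S3]{bib6}.

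\textbf{Comparison.} Your route avoids comparing the Fr\'echet topologies on $M_{\an}$ and the closed-submodule criterion. It pays for this by using flatness of the torus transition maps. Both arguments ultimately rest on the same density-plus-closedness principle inside $M_n$.

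\textbf{One point to make explicit.} Your density statement holds for the canonical $D_{H,n}$-topology, while your closedness statement is for the canonical $D_{T,n}$-topology. You should note that these topologies coincide on $M_n$. This follows from the open mapping theorem, since the identity from the $D_{T,n}$-topology to the $D_{H,n}$-topology is continuous. Alternatively, \eqref{eq3-8} identifies both with $\widehat M_n$.

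Finally, the auxiliary module $N'$ you introduce is never used and can be dropped.
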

\begin{proof}
Corollary~\ref{cor3-4} shows that the $D(T,E)$- and $D(H,E)$-module structures on $M_{\an}$ define the same topology. Closed submodules and their quotients are
therefore coadmissible
over $D(T,E)$ \cite[Lem. 3.6]{bib6}.
For a coadmissible $D(H,E)$-submodule $L\subseteq M_{\an}$, exactness and
\eqref{eq3-8} embed both radius modules in $M_n$.
Both images are finite $D_{T,n}$-submodules of $M_n$, hence closed. The image
of $L$ is dense in each of them \cite[Prop. 2.1(ii)--(iii), Rem. 3.2, and
\S3, Thm. A]{bib6}. Thus
\[
 D_{T,n}\otimes_{D(T,E)}L=\overline{\operatorname{im}(L\longrightarrow M_n)}=L_n.
\]
Exactness gives the same identification for subquotients $Q$, and
coadmissibility gives the transition identities.
\end{proof}

For $p>2$, let $Q$ be a coadmissible subquotient of $M_{\an}$.
By Corollary~\ref{cor3-5}, it is coadmissible over $D(T,E)$, with
radius modules $Q_n$. Write $\delta_T(Q)$ for its canonical dimension
over $D(T,E)$, with $\delta_T(0)=-\infty$.
Since $D_{T,n}$ is regular of pure dimension $d$, canonical dimension
at each radius agrees with support dimension. Hence
\[
 \delta_T(Q)=d_{D(T,E)}(Q)
 =\sup_{n\geq n_*}\dim\operatorname{Supp}_{D_{T,n}}Q_n.
\]
By \cite[\S8, condition (DIM) and the discussion following it]{bib6},
this dimension function is exact. The support dimensions of the radius
modules over $D_{T,n}$ are nondecreasing and eventually equal to
$\delta_T(Q)$.
Moreover, $\delta_T(Q)=-\infty$ exactly when $Q=0$.

\begin{proposition}\label{prop3-6}
For every $n\geq n_*$, every $D_{H,n}$-subquotient $Q$ of $M_n$ satisfies
\[
 d_{D_{T,n}}(Q)=d_{D_{H,n}}(Q).
\]
Consequently, every coadmissible $D_H$-subquotient $Q$ of $M_{\an}$ satisfies
\[
 \delta_T(Q)=d_{D_H}(Q).
\]
\end{proposition}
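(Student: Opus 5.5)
The plan is to compute both canonical dimensions through associated graded modules, using one good filtration on $Q$ that serves both rings. I then compare Krull dimensions of a single graded module over $\gr_{r_n}D_{T,n}$ and over $\gr_{r_n}D_{H,n}$.

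Fix $n\geq n_*$. By Proposition~\ref{prop3-3} and \eqref{eq3-8}, $M_n$ carries a single filtration $\operatorname{Fil}_n$ that is good both over $D_{H,n}$ and over $D_{T,n}$. Indeed, $\operatorname{Fil}_n^sM$ is generated by the same $m_{j,n}$ with shifts $d_{j,n}$ over $\Lambda_T$ and over $\Lambda_H$, and completion gives the two descriptions of $\widehat M_n$. Let $Q=L/L'$ with $L'\subseteq L\subseteq M_n$ submodules, and give $Q$ the induced subquotient filtration. Over $D_{H,n}$ this filtration is good, since the rings are Noetherian with Zariskian filtrations. Its associated graded $\gr Q$ is a finite module over $\gr_{r_n}D_{H,n}$. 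It is a subquotient of $\gr M_n$, and $\gr M_n$ is finite over $\gr_{r_n}D_{T,n}=\overline S_T[\eps^{\pm1}]$, because $\gr_{r_n}M^\circ$ is generated by the $\widetilde m_{j,n}$ over $\overline S_T[\eps]$ and we then invert $\eps$ via \eqref{eq2-4}. Since $\overline S_T[\eps^{\pm1}]$ is Noetherian, $\gr Q$ is also finite over it.

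The restriction of the filtration to $Q$ is therefore good for $D_{T,n}$ as well, provided the $D_{T,n}$-filtration on $Q$ is compatible with the $T$-filtration. This holds by \eqref{eq3-5}, since $\gr_{r_n}D_{T,n}\subseteq\gr_{r_n}D_{H,n}$ is a graded subring. Apply \eqref{eq2-5} over both rings. The analogue of \eqref{eq2-5} over $D_{T,n}$ is the same criterion \cite[Thm. 3.4]{bib2} for the commutative polynomial graded ring of the torus. We get
\[
 d_{D_{T,n}}(Q)=\operatorname{Kdim}_{\overline S_T[\eps^{\pm1}]}\gr Q-1,\qquad
 d_{D_{H,n}}(Q)=\operatorname{Kdim}_{\overline S_H[\eps^{\pm1}]}\gr Q-1.
\]
The ring $\overline S_H[\eps^{\pm1}]$ acts on $\gr Q$ through $\overline S_H[\eps^{\pm1}]/\operatorname{Ann}(\gr Q)$. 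This quotient is a commutative ring, finite (hence integral) over the image of $\overline S_T[\eps^{\pm1}]/\operatorname{Ann}_T(\gr Q)$, because it embeds in $\operatorname{End}$ of a finite $\overline S_T[\eps^{\pm1}]$-module. Integral extensions preserve Krull dimension, so the two Krull dimensions of $\gr Q$ coincide, giving $d_{D_{T,n}}(Q)=d_{D_{H,n}}(Q)$. The zero module is trivially handled.

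The main obstacle is this step: checking that the canonical-dimension formula \eqref{eq2-5} applies over $D_{T,n}$ with the induced filtration, i.e.\ that the filtration on $Q$ is good as a $D_{T,n}$-filtration. I would deduce this from the finiteness of $\gr Q$ over $\gr_{r_n}D_{T,n}$ together with completeness and discreteness of the filtration, as in the generator-lifting argument of Proposition~\ref{prop3-3}.

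For the coadmissible statement, let $Q$ be a coadmissible $D_H$-subquotient of $M_{\an}$. By Corollary~\ref{cor3-5}, its $T$-radius modules equal $Q_n$ for $n\geq n_*$, and each $Q_n$ is a $D_{H,n}$-subquotient of $M_n$ by exactness. Taking the supremum over $n\geq n_*$ and using that both dimension sequences are nondecreasing and stabilize at $\delta_T(Q)$ and $d_{D_H}(Q)$ respectively, the first part gives $\delta_T(Q)=d_{D_H}(Q)$.
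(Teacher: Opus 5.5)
Your proposal is correct and follows the paper's proof essentially step for step: a single filtration induced from $M_n$, finiteness of $\gr_{r_n}Q$ over the Noetherian ring $\gr_{r_n}D_{T,n}$ (so the filtration is good over both rings), the embedding of $S/\operatorname{Ann}_S(\gr Q)$ into $\operatorname{End}_R(\gr Q)$ to equate the Krull dimensions, and \eqref{eq2-5} applied to $T$ and to $H$. The coadmissible case is also deduced as in the paper, via Corollary~\ref{cor3-5} and taking the supremum over $n\geq n_*$.
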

\begin{proof}
Fix $n\geq n_*$. The first assertion is immediate for $Q=0$.
For $Q\ne0$, use the submodule and quotient filtrations induced from $M_n$.
The proof of Proposition~\ref{prop3-3} and~\eqref{eq2-4} show that
$\gr_{r_n}M_n$ is finite over $\gr_{r_n}D_{T,n}$.
Since this graded ring is Noetherian, $\gr_{r_n}Q$ is also finite over it.
Thus the same filtration on $Q$ is good over both $D_{T,n}$ and $D_{H,n}$.

Put $R=\gr_{r_n}D_{T,n}$, $S=\gr_{r_n}D_{H,n}$, and $N=\gr_{r_n}Q$.
The $R$-module $\operatorname{End}_R(N)$ is finite, and multiplication gives
an injection $S/\operatorname{Ann}_S(N)\hookrightarrow\operatorname{End}_R(N)$.
Hence $S/\operatorname{Ann}_S(N)$ is finite over $R/\operatorname{Ann}_R(N)$,
so the Krull dimensions of $N$ over $R$ and $S$ agree
\cite[Lem. 10.112.4]{bib14}.
Applying~\eqref{eq2-5} to $T$ and $H$ gives
\[
 d_{D_{T,n}}(Q)=\operatorname{Kdim}_R N-1
 =\operatorname{Kdim}_S N-1=d_{D_{H,n}}(Q).
\]
For a coadmissible subquotient, Corollary~\ref{cor3-5} identifies the two
radius modules. Taking the supremum over $n\geq n_*$ proves the second assertion.
\end{proof}

\section{Support over the torus and length bounds}
\label{sec4}
Let $p>2$, and retain the groups $H,T$ and the infinitesimal
character $\chi$ of $M_{\an}$ from Section~\ref{sec:analytic-torus}. By Proposition~\ref{prop3-3}, there is a fixed
positive integer $m$ such that $M_n$ is generated by at most $m$ elements over
$D_{T,n}$ for every $n\geq n_*$. Put $C_{\mathrm{fib}}=2^{d}m$.

Throughout this section, radius indices satisfy $n\geq n_*$ unless
explicitly stated otherwise. Every $D_{H,n}$-subquotient $Q$ of $M_n$ satisfies
\[
 \dim\operatorname{Supp}_{D_{T,n}}Q
 =d_{D_{T,n}}(Q)=d_{D_{H,n}}(Q)
\]
by Proposition~\ref{prop3-6}.

For a commutative affinoid $E$-algebra $A$, $\operatorname{Sp}A$ is its
rigid analytic space, and $\kappa(x)=A/\mathfrak m_x$ is the residue field
at a point $x$, with corresponding maximal ideal $\mathfrak m_x$.
For a finite $D_{T,n}$-module $Q$ and a point
$x\in\operatorname{Sp}D_{T,n}$, the fiber of $Q$ at $x$ is the
$\kappa(x)$-vector space
$Q\otimes_{D_{T,n}}\kappa(x)\simeq Q/\mathfrak m_xQ$. Put
\[
 \mu_x(Q)=\dim_{\kappa(x)}(Q/\mathfrak m_x Q).
\]
We use the same notation for fibers after coefficient extension.

\subsection{The torus distribution algebra and Lie algebra action}
\label{sec4-1}
Let $\sigma_1,\ldots,\sigma_d$ be the distinct
$\mathbf Q_p$-embeddings of $F$ into $E$. For $1\leq i\leq d$, let $\mathbf1_i$ be the
idempotent corresponding to $\sigma_i$ in
$E\otimes_{\mathbf Q_p}F\simeq\prod_{j=1}^{d}E$.
Choose $\eta_i\in E^\times$ with $\eta_i^2=\sigma_i(\pi_F)$ and define
the Lie algebra element $h_i\in E\otimes_{\mathbf Q_p}\operatorname{Lie}(T)$ by
\[
 h_i=\eta_i^{-1}\mathbf1_i(1\otimes\vartheta).
\]

For each $i$, identify the Lie algebra summand
$E\otimes_{F,\sigma_i}\operatorname{Lie}_F(G_1)$ of $\mathfrak g_E$
from Subsection~\ref{subsec:conductors-radii} with $\mathfrak{sl}_2(E)$
as follows. In the $\GL_2$ case apply $\sigma_i$ to matrix entries.
In the quaternion case, for $\sigma_i|_{F_0}=\tau_k$, use
\[
 \vartheta\longmapsto
       \begin{pmatrix}0&1\\\sigma_i(\pi_F)&0\end{pmatrix},\qquad
 \sqrt a\longmapsto
       \begin{pmatrix}\operatorname{Frob}^k(\sqrt a)&0\\0&-\operatorname{Frob}^k(\sqrt a)\end{pmatrix}.
\]
This sends the generators of \cite[(15)]{bib9} to
$(\sigma_i(\pi_F)^{-1}e_+,\sigma_i(\pi_F)e_-,h)$ and identifies
$\Delta_{\sigma_i}$ with the standard Casimir of $\mathfrak{sl}_2(E)$.

In this realization, the prescribed $h_i$ and our choices of $e_i,f_i$ are
\[
 h_i=\begin{pmatrix}0&\eta_i^{-1}\\\eta_i&0\end{pmatrix},\qquad
 e_i=\frac12\begin{pmatrix}1&-\eta_i^{-1}\\\eta_i&-1\end{pmatrix},\qquad
 f_i=\frac12\begin{pmatrix}1&\eta_i^{-1}\\-\eta_i&-1\end{pmatrix}.
\]
The triple $(e_i,f_i,h_i)$ is obtained by conjugating the triple
$(e_+,e_-,h)$ displayed in Subsection~\ref{subsec:weighted-torus} by
$\left(\begin{smallmatrix}1&1\\\eta_i&-\eta_i\end{smallmatrix}\right)$,
whose determinant is $-2\eta_i\ne0$. In particular,
\[
 [h_i,e_i]=2e_i,\qquad [h_i,f_i]=-2f_i,\qquad [e_i,f_i]=h_i.
\]
Elements with different indices lie in distinct
$\mathfrak{sl}_2(E)$-summands of $\mathfrak g_E$ and commute. Conjugation preserves the Casimir $\Delta_{\sigma_i}$ defined in
Subsection~\ref{subsec:casimir-residual}, so in the enveloping algebra
\[
 \frac12h_i^2+e_if_i+f_ie_i=\Delta_{\sigma_i}.
\]

Reindex the basis $(t_{i,j})$ of $T$ from
Subsection~\ref{subsec:casimir-residual} as $t_1,\ldots,t_d$, and enumerate
the corresponding elements $a^i\pi_F^j$, $0\leq i<f$, $0\leq j<e$,
as $\xi_1,\ldots,\xi_d$. Thus
\[
 t_\ell=\exp(p^b\xi_\ell\vartheta),\qquad
 w_\ell=w_*+v_p(\xi_\ell),
\]
with $w_*$ and the weights as in \eqref{eq3-4}.
Proposition~\ref{prop2-1} therefore applies to $T$.
For $n\geq0$, define
\[
 z_\ell=t_\ell-1,\qquad
 y_{\ell,n}=p^n\log t_\ell=p^n\log(1+z_\ell).
\]
For positive real numbers $R_1,\ldots,R_q$, write
$E\langle R_1^{-1}X_1,\ldots,R_q^{-1}X_q\rangle$ for the completion
of $E[X_1,\ldots,X_q]$ under the Gauss norm
\[
 \left\|\sum_\alpha a_\alpha X^\alpha\right\|
 =\sup_\alpha |a_\alpha|\prod_{i=1}^q R_i^{\alpha_i}.
\]
Here $\alpha\in\mathbf N^q$, $X^\alpha=\prod_iX_i^{\alpha_i}$, and
$|\alpha|=\sum_i\alpha_i$. Its elements are series for which
$|a_\alpha|\prod_iR_i^{\alpha_i}\to0$ as $|\alpha|\to\infty$.
 When all $R_i=1$, this is the usual Tate algebra
$E\langle X_1,\ldots,X_q\rangle$.

The elements $z_\ell$ give the distribution algebra presentation
\[
 D_{T,n}=E\langle r_n^{-w_1}z_1,\ldots,r_n^{-w_{d}}z_{d}\rangle.
\]
The completed enveloping algebra
\[
 \widehat U_{T,n}
 =D_{1/p}(T_n,E;\omega_H|_{T_n}-n)\subseteq D_{T,n}
\]
is the completion of $E[y_{1,n},\ldots,y_{d,n}]$ for the weighted PBW
norm, by Proposition~\ref{prop2-1}:
\[
 \left\|\sum_\nu a_\nu y_n^\nu\right\|
 =\max_\nu |a_\nu|p^{-\sum_\ell w_\ell\nu_\ell},
 \qquad |a_\nu|p^{-\sum_\ell w_\ell\nu_\ell}\longrightarrow0.
\]
Here $y_n^\nu=\prod_\ell y_{\ell,n}^{\nu_\ell}$, and the limit is taken as
$|\nu|\to\infty$. In the basis $h_1,\ldots,h_d$ of
$E\otimes_{\Qp}\operatorname{Lie}(T)$, we have
\begin{equation}\label{eq4-1}
 y_{\ell,n}=p^{n+b}\sum_{i=1}^{d}\sigma_i(\xi_\ell)\eta_i h_i.
\end{equation}
This linear change of basis is invertible, so
$E[y_{1,n},\ldots,y_{d,n}]=E[h_1,\ldots,h_d]$.
By Proposition~\ref{prop2-1},
$D_{T,n}\simeq\widehat U_{T,n}\rtimes_{T_n}T$.
Since $T$ is abelian, this algebra is generated over $\widehat U_{T,n}$
by the commuting group elements $t_1,\ldots,t_d$, with
$t_\ell^{p^n}=\exp(p^n\log t_\ell)=\exp(y_{\ell,n})$.
Writing $X_\ell$ for the variable representing the ordered generator
$t_\ell$ of $T$ gives
\begin{equation}\label{eq4-2}
 D_{T,n}\simeq
 \frac{\widehat U_{T,n}[X_1,\ldots,X_{d}]}
 {(X_\ell^{p^n}-\exp(y_{\ell,n}):1\leq\ell\leq d)},
 \qquad X_\ell\longmapsto t_\ell.
\end{equation}
The exponentials converge because $w_\ell>1/(p-1)$. Both sides are free
over $\widehat U_{T,n}$ with basis
$X_1^{a_1}\cdots X_{d}^{a_{d}}$, $0\leq a_\ell<p^n$, corresponding
to representatives of $T/T_n$. Each $X_\ell$ is a unit, so the derivatives
$p^nX_\ell^{p^n-1}$ are invertible. Thus this extension is finite
\'{e}tale \cite[Lem. 10.144.2]{bib14}.

Choose $\gamma_\ell^T\in E^\times$ with $|\gamma_\ell^T|=p^{-w_\ell}$.
Then
\begin{equation}\label{eq4-3}
 \widehat U_{T,n}
 =E\langle(\gamma_1^T)^{-1}y_{1,n},\ldots,
           (\gamma_d^T)^{-1}y_{d,n}\rangle.
\end{equation}

\begin{lemma}\label{lem4-1}
Let $n\geq0$, $I\subseteq\{1,\ldots,d\}$, and $\lambda\in E^I$.
If $\widehat U_{T,n}/(h_i-\lambda_i:i\in I)$ is nonzero, then
\begin{equation}\label{eq4-7}
 \widehat U_{T,n}/(h_i-\lambda_i:i\in I)
 \simeq E\langle V_1,\ldots,V_{d-|I|}\rangle.
\end{equation}
Here the $V_j$ are indeterminates.
\end{lemma}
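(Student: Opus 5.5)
We will work directly with the presentation \eqref{eq4-3}, which exhibits $\widehat U_{T,n}$ as a weighted Tate algebra in the variables $y_{1,n},\ldots,y_{d,n}$. Since $E$ contains the required weights, put $Y_\ell=(\gamma_\ell^T)^{-1}y_{\ell,n}$. Then $\widehat U_{T,n}=E\langle Y_1,\ldots,Y_d\rangle$ is the ordinary Tate algebra in $d$ variables. By \eqref{eq4-1}, each $h_i$ is an $E$-linear combination of the $y_{\ell,n}$, hence of the $Y_\ell$. Inverting the invertible matrix in \eqref{eq4-1}, we write
\[
 h_i=\sum_{\ell=1}^dc_{i\ell}Y_\ell\qquad(c_{i\ell}\in E),
\]
where the matrix $(c_{i\ell})$ is invertible. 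The elements $h_i-\lambda_i$, $i\in I$, are thus affine-linear forms in the $Y_\ell$ whose linear parts are linearly independent. The task is to show that the quotient of a Tate algebra by such affine-linear forms is either zero or again a Tate algebra, in $d-|I|$ variables.

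First we perform a linear change of coordinates. The linear parts $L_i=\sum_\ell c_{i\ell}Y_\ell$ are independent, so Gaussian elimination lets us complete them to a new system of coordinates. The difficulty is that a general $\mathrm{GL}_d(E)$ change of variables need not preserve the Tate algebra $E\langle Y\rangle$; only $\mathrm{GL}_d(\mathcal O_E)$ does. We therefore proceed in two steps. We choose a matrix $A\in\mathrm{GL}_d(\mathcal O_E)$ and an upper-triangular reduction so that the span of the $L_i$ has a basis of the form $\pi^{k_i}(Y'_{i}+\text{terms in later }Y'_j\text{ with integral coefficients})$, where $Y'=AY$. This is possible by the elementary divisor and Smith normal form theory over the DVR $\mathcal O_E$, applied to the $\mathcal O_E$-lattice obtained by intersecting $\operatorname{span}_E(L_i)$ with $\bigoplus\mathcal O_EY_\ell$, which is saturated, hence a direct summand. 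After this unimodular change, and after rescaling the generators of the ideal by units of $E$, the ideal takes the form $(Y'_i-\mu_i:i\in I')$ with $|I'|=|I|$, $\mu_i\in E$, and the $Y'_i$ part of a coordinate system of $E\langle Y'\rangle=E\langle Y\rangle$.

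Next we evaluate the quotient. In the Tate algebra $E\langle Y'_1,\ldots,Y'_d\rangle$, the quotient by $(Y'_i-\mu_i:i\in I')$ is handled by the following dichotomy. If some $|\mu_i|>1$, then $Y'_i-\mu_i=-\mu_i(1-\mu_i^{-1}Y'_i)$ is a unit, because $\mu_i^{-1}Y'_i$ has norm $<1$ and the geometric series converges; so the quotient is zero. If all $|\mu_i|\leq1$, then the substitution $Y'_i\mapsto Y'_i+\mu_i$ is an automorphism of the Tate algebra, and the quotient becomes $E\langle Y'_j:j\notin I'\rangle$. These remaining $d-|I|$ variables are the $V_j$, which gives \eqref{eq4-7}. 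The lemma's hypothesis that the quotient is nonzero puts us in the second case.

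The main obstacle is the first step: ensuring that the coordinate change is integral, so that it preserves the unit-ball Tate structure. A naive row reduction can introduce nonintegral coefficients that change the Gauss norm. We would handle this carefully via saturation of the lattice $\operatorname{span}_E(L_i)\cap\bigoplus_\ell\mathcal O_EY_\ell$. This is a direct summand of $\mathcal O_E^d$, so it has a basis that extends to an $\mathcal O_E$-basis of $\mathcal O_E^d$. Rewriting the generators $h_i-\lambda_i$ in this basis then only involves $E$-linear recombinations of the generators, which do not change the ideal.
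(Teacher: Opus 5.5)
Your argument is correct and is essentially the paper's: both pass to the Tate algebra $E\langle Y_1,\ldots,Y_d\rangle$ via \eqref{eq4-3} and turn the affine forms $h_i-\lambda_i$ into coordinates by an integral change of variables. Both then use the dichotomy that a generator whose constant term has absolute value $>1$ is a unit, while otherwise a bounded translation identifies the quotient with a Tate algebra in the remaining variables.

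The only difference is packaging. The paper eliminates one equation at a time, pivoting on a coefficient of maximal absolute value so that the substitution is bounded, and inducts on $|I|$. You instead treat all equations simultaneously by extending an $\mathcal O_E$-basis of the saturated lattice $\operatorname{span}_E(L_i)\cap\bigoplus_\ell\mathcal O_EY_\ell$ to a basis of $\mathcal O_E^d$; the paper's iterative pivoting is exactly an algorithmic proof of that direct-summand fact.
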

\begin{proof}
Put $Y_\ell=(\gamma_\ell^T)^{-1}y_{\ell,n}$. By \eqref{eq4-1}, the affine
equations $h_i-\lambda_i$ have independent linear parts.
For $I\ne\varnothing$, choose a coefficient of maximal absolute value among the coefficients of
$Y_1,\ldots,Y_d$ in one equation. Divide by it and relabel the variables:
\[
 g=c+Y_1+\sum_{\ell=2}^d d_\ell Y_\ell,\qquad |d_\ell|\leq1.
\]
If $|c|>1$, then $g/c=1+u$ with $\|u\|<1$, so $g$ is a unit,
contrary to the assumption that the quotient is nonzero. Thus $|c|\leq1$, and bounded substitution gives
\[
 E\langle Y_1,\ldots,Y_d\rangle/(g)
 \xrightarrow[\;Y_1\mapsto-c-\sum_{\ell>1}d_\ell Y_\ell\;]{\ \sim\ }
 E\langle Y_2,\ldots,Y_d\rangle.
\]
The remaining $|I|-1$ equations still have independent linear parts,
and their quotient is nonzero. Induction on $|I|$, starting with
$I=\varnothing$, proves \eqref{eq4-7}.
\end{proof}

\subsection{Bounds over torus characters}
\label{subsec:torus-character-bounds}
For a field $L$, an $L$-vector space $Q$, and an $L$-linear
endomorphism $u:Q\to Q$, write $Q[u]=\{q\in Q:u(q)=0\}$.
Given an $L$-linear endomorphism $h$, view $Q$ as an $L[t]$-module
with $t$ acting as $h$. Its torsion submodule is
\[
 Q_{\mathrm{tor}}=\{q\in Q:P(h)q=0\text{ for some }0\ne P\in L[t]\}.
\]

\begin{lemma}\label{lem4-2}
Let $L$ be a field of characteristic zero, let $R$ be a commutative
Noetherian $L$-algebra, and let $Q$ be finite over $R$. Suppose $Q$ has
an $\mathfrak{sl}_2(L)$-action. Choose a basis $(e,f,h)$ of
$\mathfrak{sl}_2(L)$ satisfying
\[
 [h,e]=2e,\qquad [h,f]=-2f,\qquad [e,f]=h.
\]
Assume that $h$ acts by multiplication by an element of $R$, also
denoted by $h$. Then $Q_{\mathrm{tor}}$ is a locally finite
$\mathfrak{sl}_2(L)$-module,
and for every $\lambda\in L$,
\[
 Q[(h-\lambda)^r]=Q[h-\lambda]\qquad(r\geq1).
\]
If the Casimir $\Delta=\frac12h^2+ef+fe$ acts on $Q_{\mathrm{tor}}$ by a scalar $c$, then
$Q_{\mathrm{tor}}=0$ unless $c=k(k+2)/2$ for some integer $k\geq0$.
In that case the eigenvalues of $h$ on $Q_{\mathrm{tor}}$ belong to $\{k,k-2,\ldots,-k\}$.
\end{lemma}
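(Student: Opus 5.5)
The plan is to reduce everything to the structure theory of finite-dimensional $\mathfrak{sl}_2$-modules in characteristic zero. The first step is to show that $Q_{\mathrm{tor}}$ is annihilated by a single nonzero polynomial in $h$.

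Since $h$ acts through an element of $R$, each $Q[P(h)]$ is an $R$-submodule of $Q$, so $Q_{\mathrm{tor}}$ is an $R$-submodule. As $R$ is Noetherian and $Q$ is finite, $Q_{\mathrm{tor}}$ is finitely generated, say by $q_1,\dots,q_s$ with $P_i(h)q_i=0$. Then $P_0=\prod_iP_i\ne0$ satisfies $P_0(h)Q_{\mathrm{tor}}=0$. This is the only place where the Noetherian and finiteness hypotheses enter, and I regard it as the key step: without it, $Q_{\mathrm{tor}}$ could carry infinitely many $h$-eigenvalues.

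Next I would use the commutation relations $he=e(h+2)$ and $hf=f(h-2)$. These give $P(h)e=eP(h+2)$ and $P(h)f=fP(h-2)$ for every $P\in L[t]$. Hence if $P(h)q=0$, then $P(h-2)(eq)=0$ and $P(h+2)(fq)=0$, so $e$ and $f$ preserve $Q_{\mathrm{tor}}$.

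To get local finiteness, pass to $\overline L\otimes_LQ_{\mathrm{tor}}$, which is harmless for all the assertions. There $Q_{\mathrm{tor}}$ is the direct sum of the generalized $h$-eigenspaces attached to the finitely many roots of $P_0$. The operator $e$ maps the generalized $\mu$-eigenspace into the generalized $(\mu+2)$-eigenspace, and $f$ maps it into the generalized $(\mu-2)$-eigenspace. Since $\operatorname{char}L=0$, the shifts $\mu+2j$ are distinct, so for some $N$ both $e^N$ and $f^N$ vanish on $Q_{\mathrm{tor}}$. By PBW, $U(\mathfrak{sl}_2)q$ is spanned by the vectors $f^ah^be^cq$ with $a,c<N$ and $b<\deg P_0$, so it is finite-dimensional. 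Thus $Q_{\mathrm{tor}}$ is locally finite.

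Every finite-dimensional $\mathfrak{sl}_2(\overline L)$-module is semisimple, and on it $h$ acts diagonalizably with integer eigenvalues. Consequently $h$ is semisimple on $Q_{\mathrm{tor}}$. Any $q$ with $(h-\lambda)^rq=0$ lies in $Q_{\mathrm{tor}}$ by definition, so it is an honest eigenvector, which gives $Q[(h-\lambda)^r]=Q[h-\lambda]$. This equality descends from $\overline L$ to $L$.

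For the Casimir statement, decompose each finite-dimensional submodule into irreducibles $V(k)$ with highest weight $k\geq0$. On a highest weight vector $v$ we have $fev=0$ and $efv=hv=kv$, so $\Delta$ acts on $V(k)$ by $k^2/2+k=k(k+2)/2$. If $\Delta$ acts by $c$ and $Q_{\mathrm{tor}}\ne0$, then $c=k(k+2)/2$ for some integer $k\geq0$. Since $k\mapsto k(k+2)/2$ is injective on $k\geq0$, every irreducible constituent is $V(k)$ for this single $k$, and the $h$-eigenvalues lie in $\{k,k-2,\dots,-k\}$.
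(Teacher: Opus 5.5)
Your proposal is correct and follows the paper's proof essentially step for step. Both arguments use the same chain: a single annihilating polynomial for $Q_{\mathrm{tor}}$ coming from finite generation over the Noetherian ring $R$; stability under $e,f$ via the shift relations; generalized eigenspace decomposition after splitting, which forces $e^N=f^N=0$; a PBW spanning set; complete reducibility; and distinctness of the Casimir values $k(k+2)/2$. The only differences are cosmetic: you extend to $\overline L$ rather than to a finite splitting field, and you order PBW monomials as $f^ah^be^c$ rather than $e^ah^bf^c$.
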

\begin{proof}
We use the same letters $e,f,h$ for their actions on $Q$.
The $R$-submodule $Q_{\mathrm{tor}}$ has finitely many generators
$q_1,\ldots,q_t$. If $P_j(h)q_j=0$, then $P=\prod_jP_j$ satisfies
$P(h)Q_{\mathrm{tor}}=0$. The relations
\[
 (h-(\lambda+2))e=e(h-\lambda),\qquad
 (h-(\lambda-2))f=f(h-\lambda)
\]
make $Q_{\mathrm{tor}}$ stable under the Lie algebra action. After a
finite field extension splitting $P$, the Chinese remainder theorem gives
\[
 Q_{\mathrm{tor}}=\bigoplus_{\lambda\in\Lambda}Q_{(\lambda)},\qquad
 Q_{(\lambda)}=\{q:(h-\lambda)^rq=0\text{ for some }r\geq1\},
\]
where $\Lambda$ is the finite set of roots of $P$. Moreover,
\[
 eQ_{(\lambda)}\subseteq Q_{(\lambda+2)},\qquad
 fQ_{(\lambda)}\subseteq Q_{(\lambda-2)},\qquad
 Q_{(\mu)}=0\quad(\mu\notin\Lambda).
\]
Since $L$ has characteristic zero and $\Lambda$ is finite, some $N\geq1$
satisfies $e^NQ_{\mathrm{tor}}=f^NQ_{\mathrm{tor}}=0$.
Writing $\ell=\deg P$, PBW and $P(h)Q_{\mathrm{tor}}=0$ give
\[
 U(\mathfrak{sl}_2)q
 =\sum_{\substack{0\leq a,c<N\\0\leq b<\ell}}Le^ah^bf^cq
 \qquad(q\in Q_{\mathrm{tor}}).
\]
Each such submodule is finite-dimensional. By complete reducibility, $h$ acts semisimply on it. Since every element killed by
$(h-\lambda)^r$ belongs to $Q_{\mathrm{tor}}$, this gives
$Q[(h-\lambda)^r]=Q[h-\lambda]$.
This equality also holds over $L$, since scalar extension is faithful. On a simple
module of highest weight $k\geq0$, the Casimir is $k(k+2)/2$ and the
weights are $k,k-2,\ldots,-k$. These Casimir values are distinct, proving
the last assertion.
\end{proof}

Since distinct $\mathfrak{sl}_2(E)$-summands commute, imposing
$h_j=\lambda_j$ for $j\in J$ preserves the actions indexed by $i\notin J$.
Lemma~\ref{lem4-2} applies to the resulting quotient modules.

\begin{theorem}\label{thm4-3}
Let $n\geq n_*$ and let $x\in\operatorname{Sp}D_{T,n}$.
Every $D_{H,n}$-subquotient $Q$ of $M_n$ satisfies
\[
 \mu_x(Q)\leq C_{\mathrm{fib}}.
\]
\end{theorem}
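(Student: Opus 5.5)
The plan is to reduce the fiber at $x$ to a computation over the weights of the commuting elements $h_1,\ldots,h_d$ and then to control subquotients with the $\mathfrak{sl}_2$-structure from Lemma~\ref{lem4-2}. Three steps: locate $x$ over a joint weight $\lambda$; pass to the quotient $Q/(h_i-\lambda_i)Q$; bound the dimension of that quotient by $2^dm$.

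\emph{Step 1 (locating $x$).} The point $x$ gives values of $z_\ell$, hence of $t_\ell$ and of $y_{\ell,n}=p^n\log t_\ell$. Via \eqref{eq4-1}, these determine scalars $\lambda_i=h_i(x)\in\kappa(x)$. Thus $\mathfrak m_x\supseteq(h_i-\lambda_i:1\le i\le d)D_{T,n}$, after extending coefficients to $\kappa(x)$, which is harmless since fibers are computed after coefficient extension anyway. By Lemma~\ref{lem4-1} with $I=\{1,\ldots,d\}$, the quotient $\widehat U_{T,n}/(h_i-\lambda_i)$ is $\kappa(x)$. By \eqref{eq4-2}, $D_{T,n}/(h_i-\lambda_i)$ is then finite \'etale over $\kappa(x)$, hence a finite product of fields, and $\kappa(x)$ is one factor. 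Consequently $Q/\mathfrak m_xQ$ is a direct factor of $\overline Q_\lambda:=Q/(h_1-\lambda_1,\ldots,h_d-\lambda_d)Q$. Since $M_n$ has at most $m$ generators over $D_{T,n}$ by Proposition~\ref{prop3-3}, we get $\mu_x(M_n)\le m$ for $M_n$ itself.

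\emph{Step 2 (reduction to a weight count).} A subquotient $Q=A/B$ satisfies $\mu_x(Q)\le\mu_x(A)$, so it suffices to treat submodules $A\subseteq M_n$. Here fiber dimension is not monotone, and the defect is $\operatorname{Tor}_1^{D_{T,n}}(M_n/A,\kappa(x))$. I would compute fibers one coordinate at a time, imposing $h_1=\lambda_1$, then $h_2=\lambda_2$, and so on. At the $i$-th step, the exact sequence
\[
 0\to N[h_i-\lambda_i]\to N\xrightarrow{h_i-\lambda_i}N\to N/(h_i-\lambda_i)N\to0
\]
is applied to the current quotient $N$. Since distinct $\mathfrak{sl}_2$-summands commute, each such $N$ still carries the $\mathfrak{sl}_2(E)$-action indexed by $i$, and its Casimir $\Delta_{\sigma_i}$ acts by the scalar coming from $\chi$. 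Lemma~\ref{lem4-2} applies to $N$: the kernel $N[h_i-\lambda_i]$ lies in the torsion part $N_{\rm tor}$, which is locally finite. It is nonzero only if $\chi_i$ is algebraic, and then only for $\lambda_i\in\{k,k-2,\ldots,-k\}$. Moreover $N[(h_i-\lambda_i)^r]=N[h_i-\lambda_i]$, so the Tor-type error terms are semisimple weight spaces. On the generic side (no torsion), the relations $ef=\tfrac12(c-\tfrac12\lambda^2+\lambda)$ and $ef-fe=h$ make $e_i$ and $f_i$ identify the weight-$\lambda_i$ piece with the weight-$(\lambda_i\pm2)$ piece. So the cokernel for $A$ is controlled by the corresponding cokernel for $M_n$ at $\lambda_i$ and at one neighbouring weight. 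Each coordinate therefore costs at most a factor $2$. Iterating over $i=1,\ldots,d$ and starting from $\mu\le m$ for $M_n$ gives $\mu_x(Q)\le2^dm=C_{\rm fib}$.

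\emph{Main obstacle.} The hardest point is making the ``factor $2$ per coordinate'' rigorous. The operators $e_i,f_i$ are not $D_{T,n}$-linear: they twist the $T$-action by a character. So they move the fiber at $x$ to the fiber at a shifted point, and they do not relate fibers at the same point. I would handle this by working throughout with $\overline Q_\lambda$, the full quotient over the finite \'etale algebra $D_{T,n}/(h-\lambda)$, rather than with the single point $x$. This is a module on which only $\lambda$ matters and on which $e_i$ maps $\overline Q_{\lambda-2\mathbf 1_i}$ to $\overline Q_\lambda$. I would then try to prove by induction on $d$ that this quotient, restricted to the \'etale factor of $x$, has dimension at most $2^d m$, using the two weights $\lambda_i$ and $\lambda_i-2$ at each stage. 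The torsion case of Lemma~\ref{lem4-2}, where $ef$ can vanish, is where I would first check that the bound is not lost.
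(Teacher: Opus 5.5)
There is a genuine gap, and it sits at the point you flag as the main obstacle. The proposal never bounds the $\operatorname{Tor}$ defect. The mechanism you suggest for the factor $2$, comparing weight $\lambda_i$ with $\lambda_i\pm2$ via $e_i,f_i$, cannot give a bound at $x$. As you note yourself, $e_i$ and $f_i$ are semilinear for a translation of $D_{T,n}$, so they carry the fiber at $x$ to the fiber at a different point.

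No neighbouring weight is needed: both terms in the correct estimate live at the same weight $\lambda_i$. The missing observation is that the identity $N[t^2]=N[t]$ from Lemma~\ref{lem4-2}, which you already quote, gives $N[t]\cap tN=0$. Hence the natural map $N[t]\to N/tN$ is injective.

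Apply this with $t=h_i-h_i(x)$ to $B_0=M^{(J)}/A_0$. Here $M^{(J)}=M_n/(h_j-h_j(x):j\in J)M_n$, and $A_0\subseteq M^{(J)}$ is invariant under the $\mathfrak{sl}_2(E)$-actions with indices outside $J$. The snake lemma for multiplication by $t$ on $0\to A_0\to M^{(J)}\to B_0\to0$ identifies $C=\ker(A_0/tA_0\to M^{(J)}/tM^{(J)})$ with a quotient of $B_0[t]$. By the injection above, $C$ is then a subquotient of $B_0/tB_0$, and so of $M^{(J)}/tM^{(J)}=M^{(J\cup\{i\})}$.

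Write $A_1$ for the image of $A_0/tA_0$ in $M^{(J\cup\{i\})}$. Right exactness of $-\otimes\kappa(x)$ gives $\mu_x(A_0)=\mu_x(A_0/tA_0)\le\mu_x(C)+\mu_x(A_1)$. Both $C$ and $A_1$ are invariant under the actions with indices outside $J\cup\{i\}$.

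For this to iterate, set up the induction for \emph{all} invariant subquotients of $M^{(J)}$, not only for submodules of $M_n$ or a single ``current quotient''. That is, prove $\mu_x(Q)\le2^{d-|J|}m$ for every such $Q$ by induction on $r=d-|J|$, using $\mu_x(Q)\le\mu_x(A_0)$ for an invariant $A_0$ surjecting onto $Q$.

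The base case $|J|=d$ is your Step~1, but it is needed for every subquotient, not just for $M_n$. Since $R^{(J)}$ is then a finite product of fields, projection to the factor $\kappa(x)$ is exact, so $\mu_x(Q)\le\mu_x(M^{(J)})\le m$.

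With this formulation the factor $2$ per coordinate is just $\mu_x(C)+\mu_x(A_1)\le2^{r-1}m+2^{r-1}m$. The torsion case you were worried about is exactly what the embedding $B_0[t]\hookrightarrow B_0/tB_0$ handles.
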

\begin{proof}
A finite coefficient extension preserves fiber dimensions and the bound
$m$ on the number of generators over $D_{T,n}$, and is faithfully flat
\cite[Lem. 1.1.5]{bib15}.
For this proof, we may therefore assume that $x$ is $E$-rational.
For a subset $J\subseteq\{1,\ldots,d\}$, put
\[
\begin{gathered}
 R^{(J)}=D_{T,n}/(h_j-h_j(x):j\in J),\qquad r=d-|J|,\\
 M^{(J)}=M_n\otimes_{D_{T,n}}R^{(J)}.
\end{gathered}
\]
Then $M^{(J)}$ is generated by at most $m$ elements and carries the
commuting $\mathfrak{sl}_2(E)$-actions indexed by $i\notin J$, with
$h_i$ acting by multiplication.
In this induction, invariant subquotients are those stable under these
$\mathfrak{sl}_2(E)$-actions. We prove by induction on $r$ that
\[
 \mu_x(Q)\leq2^rm
 \quad\text{for every invariant subquotient }Q\text{ of }M^{(J)}.
\]
When $r=0$, Lemma~\ref{lem4-1} and the finite \'{e}tale presentation
\eqref{eq4-2} show that $R^{(J)}$ is finite \'{e}tale over $E$. Hence it
is a finite product of fields, with
$\kappa(x)=E$ as the factor corresponding to $x$. Projection onto this
factor is exact, so $\mu_x(Q)\leq\mu_x(M^{(J)})\leq m$.

Suppose $r>0$, and let $Q$ be an invariant subquotient of $M^{(J)}$.
Choose an invariant submodule $A_0\subseteq M^{(J)}$ with an equivariant
surjection $A_0\twoheadrightarrow Q$, and put $B_0=M^{(J)}/A_0$.
Choose $i\notin J$ and let
$t\in R^{(J)}$ be the image of the element $h_i-h_i(x)$.
Applying the snake lemma to multiplication by
$t$ on $0\to A_0\to M^{(J)}\to B_0\to0$ gives the exact sequence
\[
 M^{(J)}[t]\longrightarrow B_0[t]\longrightarrow A_0/tA_0
 \longrightarrow M^{(J)}/tM^{(J)}\longrightarrow B_0/tB_0
 \longrightarrow0.
\]
Lemma~\ref{lem4-2} gives $B_0[t^2]=B_0[t]$.
Thus $B_0[t]\cap tB_0=0$, so the natural map $B_0[t]\to B_0/tB_0$
is injective. Let $C$ be the kernel of
$A_0/tA_0\to M^{(J)}/tM^{(J)}$. Exactness gives
\[
 C\simeq B_0[t]\big/\operatorname{im}\bigl(M^{(J)}[t]\to B_0[t]\bigr).
\]
The injection $B_0[t]\hookrightarrow B_0/tB_0$ therefore makes $C$ a
subquotient of $B_0/tB_0$, and hence of $M^{(J)}/tM^{(J)}$.
Let $A_1$ be the image of $A_0/tA_0\to M^{(J)}/tM^{(J)}$.
After imposing $h_i=h_i(x)$, we work over $R^{(J\cup\{i\})}=R^{(J)}/tR^{(J)}$
with the module $M^{(J\cup\{i\})}=M^{(J)}/tM^{(J)}$.
The modules $C$ and $A_1$ are invariant under the $r-1$
$\mathfrak{sl}_2(E)$-actions with indices outside $J\cup\{i\}$. The induction hypothesis therefore applies
to both of them. Tensoring $0\to C\to A_0/tA_0\to A_1\to0$ with the
residue field $E$ gives
\[
 \mu_x(A_0)=\mu_x(A_0/tA_0)
 \leq\mu_x(C)+\mu_x(A_1)
 \leq2^{r-1}m+2^{r-1}m=2^rm.
\]
Right exactness of tensor product gives $\mu_x(Q)\leq\mu_x(A_0)$,
completing the induction. Taking $J=\varnothing$ gives the assertion,
since $M^{(\varnothing)}=M_n$.
\end{proof}

For subquotients of canonical dimension zero, the bound for fibers over
torus characters gives a dimension estimate in terms of the conductor.
\begin{corollary}\label{cor4-4}
Let $n\geq n_*$ and let $Q$ be a $D_{H,n}$-subquotient of $M_n$
of canonical dimension zero. Then the infinitesimal character $\chi$
is algebraic, with algebraic type $V$ as in Subsection~\ref{subsec:conductors-radii}.
There is a finite-dimensional smooth $H$-representation $W$ such that
$Q=V^\vee\otimes_EW^\vee$ and
\[
 \dim_EW\leq C_{\mathrm{fib}}p^{dc_H(W)}.
\]
\end{corollary}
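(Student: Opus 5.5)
Since $Q$ has canonical dimension zero, Lemma~\ref{lem2-2} makes it finite-dimensional over $E$. The infinitesimal character passes to $Q$, because $Q$ is a $D_{H,n}$-subquotient of $M_n$ and $Z(U(\mathfrak g_E))$ acts on $M_n$ through $\chi$ by base change. By the semisimplicity result quoted in Subsection~\ref{subsec:conductors-radii} (\cite[Prop. 3.2]{bib21} and \cite[Lem. 6.5]{bib2}), the nonzero finite-dimensional locally analytic $H$-representation $Q$ forces $\chi$ to be algebraic. It also gives $Q\simeq V^\vee\otimes_EW^\vee$ with $W$ smooth and finite-dimensional. Since the diagonal $H$-action on $Q$ extends to $D_{H,n}$, Lemma~\ref{lem2-4} gives $c:=c_H(W)\leq n$. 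It remains to bound $\dim_E W$.

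For the dimension bound, restrict to the torus $T$ and count fibers. The algebra $D_{T,n}$ is commutative, and $Q$ is a finite-dimensional $D_{T,n}$-module. Hence $Q$ is supported at finitely many points $x\in\operatorname{Sp}D_{T,n}$ and decomposes as the direct sum of its localizations $Q_x$. After a finite coefficient extension we may assume all these points are $E$-rational. The group $T$ acts on $V^\vee\otimes W^\vee$ diagonally. The subgroup $T_c\subseteq H_c$ acts trivially on $W^\vee$, so on $Q$ it acts through $V^\vee$, which is algebraic. Consequently $W^\vee|_T$ factors through $T/T_c$, a finite abelian group of order $p^{dc}$.

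Decompose $W^\vee|_T=\bigoplus_\psi W_\psi$ into isotypic components for characters $\psi$ of $T/T_c$. The number of distinct $\psi$ is at most $p^{dc}$. Each component $V^\vee\otimes W_\psi$ is a $D_{T,n}$-submodule. The Lie algebra of $T$ acts on it through $V^\vee$, so $V^\vee\otimes W_\psi$ is a direct sum of $\dim W_\psi$ copies of $V^\vee\otimes\psi$ as a $D_{T,n}$-module. Choose a weight $\nu$ of $T$ on $V^\vee$, for instance a highest weight, and let $x_\psi$ be the point corresponding to the character $\nu\psi$ of $T$. The fiber of a direct sum of copies of $V^\vee\otimes\psi$ at $x_\psi$ has dimension at least the number of copies, since $\nu\psi$ occurs in $V^\vee\otimes\psi$ with multiplicity at least one. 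This gives $\mu_{x_\psi}(Q)\geq\dim_E W_\psi$.

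Theorem~\ref{thm4-3} bounds $\mu_{x_\psi}(Q)\leq C_{\mathrm{fib}}$, hence $\dim W_\psi\leq C_{\mathrm{fib}}$ for every $\psi$. Summing over the at most $p^{dc}$ characters gives $\dim_EW\leq C_{\mathrm{fib}}p^{dc_H(W)}$.

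The main obstacle is the fiber lower bound. One has to check that the fiber dimension $\mu_x$ of $(V^\vee\otimes\psi)^{\oplus k}$, computed as $Q/\mathfrak m_xQ$, really is at least $k$, since the action of $T$ need not be semisimple. I would handle this by passing to the quotient onto the $\nu\psi$-generalized eigenspace, where the top weight gives a surjection $V^\vee\otimes\psi\twoheadrightarrow\kappa(x_\psi)$. Applying this to each copy yields a surjection $Q\twoheadrightarrow\kappa(x_\psi)^k$ of $D_{T,n}$-modules. Right exactness then gives $\mu_{x_\psi}(Q)\geq k$.
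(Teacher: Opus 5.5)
Your proof is correct and follows essentially the same route as the paper: adjoin enough $p$-power roots of unity, use that $T_{c_H(W)}\subseteq H_{c_H(W)}$ acts trivially on $W^\vee$, bound the fibers of $Q$ at torus characters by Theorem~\ref{thm4-3}, and count the at most $p^{dc_H(W)}$ characters of $T/T_{c_H(W)}$. The differences are minor. The paper bounds every $T$-eigenspace $Q[\theta]$, identified with the fiber at $\theta$ via semisimplicity of the $T$-action, over at most $d_Vp^{dc_H(W)}$ characters $\theta$, and then divides by $d_V$. You instead use a single weight of $V^\vee$ for each smooth character $\psi$, together with a direct-summand argument. That argument is valid, but your concern about non-semisimplicity is unnecessary: the $h_i$ act semisimply on the algebraic factor, and $T/T_{c_H(W)}$ is a finite abelian group in characteristic zero.
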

\begin{proof}
By Lemma~\ref{lem2-2}, $Q$ is finite-dimensional over $E$.
Subsection~\ref{subsec:conductors-radii} shows that the existence of $Q\ne0$ forces $\chi$ to be
algebraic and gives the displayed decomposition. Put $\nu=c_H(W)$.
Finite coefficient extension preserves the dimensions and the bound $m$
on the number of generators over $D_{T,n}$, so, for this proof, we may
assume $\mu_{p^\nu}\subset E$.
The torus acts semisimply on $Q$,
through algebraic weights on $V^\vee$ and a finite abelian quotient
on $W^\vee$.
For each character $\theta$ of $T$ occurring in $Q$, denote the
corresponding eigenspace by $Q[\theta]$. Density of $E[T]$ in $D_{T,n}$
shows that $D_{T,n}$ acts by scalars on this eigenspace, giving a character
$D_{T,n}\to E$ and an $E$-rational point of $\operatorname{Sp}D_{T,n}$.
Semisimplicity gives
\[
 Q[\theta]\simeq Q\otimes_{D_{T,n},\theta}E.
\]
Thus Theorem~\ref{thm4-3} gives
$\dim_E Q[\theta]\leq C_{\mathrm{fib}}$.
There are at most $d_V=\dim_EV$ algebraic weights and
$[T:T_\nu]=p^{d\nu}$ characters of $T/T_\nu$, so
\[
 \#\{\theta:Q[\theta]\ne0\}\leq d_Vp^{d\nu}.
\]
Consequently,
\[
 d_V\dim_EW=\dim_E Q
 =\sum_\theta\dim_E Q[\theta]
 \leq C_{\mathrm{fib}}d_Vp^{d\nu}.
\]
\end{proof}

\subsection{Support components}
\label{sec4-2}
For $n\geq0$, $I\subseteq\{1,\ldots,d\}$ and
$\lambda=(\lambda_i)_{i\in I}\in E^I$, define the closed analytic subspace
\begin{equation}\label{eq4-6}
 Z_{I,\lambda,n}=\operatorname{Sp}\bigl(D_{T,n}/(h_i-\lambda_i:i\in I)\bigr).
\end{equation}
For $\lambda$ with entries in a coefficient extension, apply the same
definition over that field. The coordinate algebra of $Z_{I,\lambda,n}$ is
\[
 \mathcal O(Z_{I,\lambda,n})
 =D_{T,n}/(h_i-\lambda_i:i\in I)
 =D_{T,n}\otimes_{\widehat U_{T,n}}
        \widehat U_{T,n}/(h_i-\lambda_i:i\in I).
\]

Lemma~\ref{lem4-1} and the finite \'{e}tale presentation \eqref{eq4-2}
show that every nonempty $Z_{I,\lambda,n}$ is smooth of dimension $d-|I|$.
The same arguments apply after any complete coefficient extension.

Define the $E$-algebra automorphism $\mathcal T_i$ of $E[h_1,\ldots,h_d]$ by
\[
 \mathcal T_i(h_j)=h_j+2\delta_{ij}.
\]
By \eqref{eq4-1},
\[
 \mathcal T_i(y_{\ell,n})
 =y_{\ell,n}+2p^{n+b}\sigma_i(\xi_\ell)\eta_i.
\]
Since $v_p(\eta_i)=1/(2e)$ and $w_\ell=w_*+v_p(\xi_\ell)$, the choice of $b$ gives
\[
 v_p\bigl(2p^{n+b}\sigma_i(\xi_\ell)\eta_i\bigr)
 =n+b+v_p(2)+v_p(\xi_\ell)+\frac1{2e}\geq w_\ell.
\]
Thus $\mathcal T_i$ and its inverse are bounded for the PBW norm.
They therefore extend to mutually inverse continuous automorphisms of
$\widehat U_{T,n}$.
The bracket relations give
\[
 e_i h_j=(h_j-2\delta_{ij})e_i,\qquad
 f_i h_j=(h_j+2\delta_{ij})f_i.
\]
Since polynomials are dense in $\widehat U_{T,n}$, continuity gives the following identities for $r\in\widehat U_{T,n}$ and $q$ in
any $D_{T,n}$-subquotient of $M_n$ stable under the actions of $e_i$
and $f_i$:
\begin{equation}\label{eq4-4}
 e_i(rq)=\mathcal T_i^{-1}(r)e_iq,\qquad
 f_i(rq)=\mathcal T_i(r)f_iq.
\end{equation}
Let $c_i=\chi(\Delta_{\sigma_i})$ and define the polynomial $q_i\in E[t]$ by
\[
 q_i(t)=\frac{c_i}{2}-\frac{t^2}{4}-\frac{t}{2}.
\]
Since $\Delta_{\sigma_i}=\frac12h_i^2+2f_ie_i+h_i$, the following identities hold on every such subquotient:
\begin{equation}\label{eq4-5}
 f_ie_i=q_i(h_i),\qquad e_if_i=q_i(h_i-2).
\end{equation}
Put
\[
 I_{\mathrm{alg}}=\{i\in\{1,\ldots,d\}:
   c_i=k(k+2)/2\text{ for some }k\in\Z_{\geq0}\}.
\]
For $i\in I_{\mathrm{alg}}$, let $k_i$ be the unique such integer, and put
\[
 \Omega_i=\{k_i,k_i-2,\ldots,-k_i\}\quad(i\in I_{\mathrm{alg}}),
 \qquad \Omega_i=\varnothing\quad(i\notin I_{\mathrm{alg}}).
\]

For an irreducible closed analytic subspace
$C\subseteq\operatorname{Sp}D_{T,n}$, put
\[
 I(C)=\{i\in\{1,\ldots,d\}:h_i|_C\in E\},\qquad
 h_i|_C=\lambda_i\quad(i\in I(C)).
\]

\begin{proposition}\label{prop4-5}
Let $Q$ be a finite $D_{T,n}$-module with a $\mathfrak g_E$-action.
Suppose each $h_i$ acts by multiplication, the actions of $e_i$ and $f_i$
satisfy \eqref{eq4-4} for $r\in\widehat U_{T,n}$, and each
$\Delta_{\sigma_i}$ acts by $c_i$.
Let $C$ be an irreducible component of $\operatorname{Supp}_{D_{T,n}}(Q)$.
Then $I(C)\subseteq I_{\mathrm{alg}}$, $\lambda_i\in\Omega_i$ for
$i\in I(C)$, and $C$ is an irreducible component of
$Z_{I(C),\lambda,n}$, of dimension $d-|I(C)|$.
\end{proposition}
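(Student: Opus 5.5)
Set $I=I(C)$ and $\lambda=(\lambda_i)_{i\in I}$. The plan is to show that for $i\in I$ the constant value forces torsion for $h_i$, hence algebraicity by Lemma~\ref{lem4-2}. For $i\notin I$ we then show that no further equation cuts $C$ down, by using the $\mathcal T_i$-translation symmetry coming from $e_i,f_i$.

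\emph{Step 1: $I(C)\subseteq I_{\mathrm{alg}}$ and $\lambda_i\in\Omega_i$.} Let $\mathfrak p$ be the minimal prime of $\operatorname{Ann}(Q)$ defining $C$. For $i\in I$ we have $h_i-\lambda_i\in\mathfrak p$. Since $D_{T,n}$ is Noetherian, we can pick an associated prime of $Q$ equal to $\mathfrak p$ (minimal primes of the support are associated). This gives a nonzero $q\in Q$ with $\operatorname{Ann}(q)=\mathfrak p$, so $(h_i-\lambda_i)q=0$. Hence $Q_{\mathrm{tor}}\neq0$ for the $i$-th $\mathfrak{sl}_2$. Lemma~\ref{lem4-2}, applied with $R=D_{T,n}$ and the $i$-th triple, and using that $\Delta_{\sigma_i}$ acts by $c_i$, gives $c_i=k_i(k_i+2)/2$ and $\lambda_i\in\Omega_i$. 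The lemma needs $\lambda_i\in E$, which holds by the definition of $I(C)$.

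\emph{Step 2: $C\subseteq Z_{I,\lambda,n}$ is a component.} The inclusion is clear. Since $Z_{I,\lambda,n}$ is smooth of pure dimension $d-|I|$ (Lemma~\ref{lem4-1} and \eqref{eq4-2}), it suffices to show $\dim C\geq d-|I|$. Suppose instead that $\dim C<d-|I|$. Then the image of $C$ in $\operatorname{Sp}\widehat U_{T,n}/(h_i-\lambda_i:i\in I)\cong\operatorname{Sp}E\langle V\rangle$ is a proper closed subset; this uses the finite map \eqref{eq4-2}. The goal is to produce some $i\notin I$ for which $h_i$ is algebraic on $C$ over $E$, and from this to extract that $h_i$ is constant on $C$, contradicting $i\notin I$.

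The mechanism for this uses the translations. By \eqref{eq4-4}, $e_i$ maps the $\mathfrak p$-torsion into the $\mathcal T_i(\mathfrak p)$-torsion up to shift, and similarly for $f_i$. By \eqref{eq4-5}, $f_ie_i=q_i(h_i)$. So unless $q_i(h_i)$ or $q_i(h_i-2)$ lies in $\mathfrak p$, the translates $\mathcal T_i^{\pm k}(C)$ all lie in $\operatorname{Supp}Q$. Such a membership would make $h_i$ constant, and by Step~1 this cannot happen for $i\notin I$ unless $i\in I(C)$.

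Since $\operatorname{Supp}Q$ has finitely many components, some power $\mathcal T_i^N$ stabilizes $C$. Now consider the function $h_i$ on the irreducible affinoid $C$. It is invariant under composition with translation by $2N$ through $\mathcal T_i^N$; concretely, $\mathcal T_i^N$ acts on $\mathcal O(C)$ with $h_i\mapsto h_i+2N$. The sup-norm $\|h_i\|_C$ is finite, and $\mathcal T_i^N$ preserves it because $\mathcal T_i^N$ is an isometric automorphism for the PBW norm; the estimate on $v_p(2p^{n+b}\sigma_i(\xi_\ell)\eta_i)$ gives this. Iterating then yields $\|h_i+2Nk\|_C\leq\|h_i\|_C$ for all $k$. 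This is not immediately contradictory $p$-adically, so the tool I would actually use is a different argument: the image of $h_i$ on $C$ is a bounded affinoid image stable under $x\mapsto x+2N$. A rigid-analytic function on a connected affinoid that is invariant in this way has image that is a finite union of discs, translation-stable. The plan is to show that the residual structure forces the dimension count: along the $h_i$-direction there is no equation, so $C$ is a union of fibers of $\mathcal T_i$-orbits.

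\emph{Main obstacle.} Step~2 is where I expect the real difficulty. The cleanest route I would try first is to pick a generic point $x\in C$ and work in the fiber module $Q/\mathfrak m_xQ$. Using Theorem~\ref{thm4-3}-style induction on the coordinates $h_j-h_j(x)$, $j\notin I$, I would aim to show that $\operatorname{Supp}Q$ near $x$ contains the full fiber of $Z_{I,\lambda,n}$. The argument would run by contradiction: if some $g\in\mathfrak p\setminus(h_i-\lambda_i:i\in I)$ existed, then applying $e_j,f_j$ and \eqref{eq4-4} repeatedly produces $\mathcal T$-translates of $g$ in $\operatorname{Ann}$ of generic torsion. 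Noetherianity then forces $g$ to lie in the ideal generated by the $h_j-\lambda_j$.
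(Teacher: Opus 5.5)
Your Step~1 is correct and matches the paper: take $\mathfrak p$ associated to $Q$, pick $q$ with $\operatorname{Ann}(q)=\mathfrak p$, and apply Lemma~\ref{lem4-2}. The idea that some power $\mathcal T_i^{N}$ stabilizes $C$ for $i\notin I(C)$ is also the paper's first step, but it must be set up more carefully.

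\begin{itemize}
\item $\mathcal T_i$ and \eqref{eq4-4} are defined only on $\widehat U_{T,n}$, so you should work with $\mathfrak q=\mathfrak p\cap\widehat U_{T,n}$ rather than with translates of $C$ inside $\operatorname{Sp}D_{T,n}$.
\item Show that no nonzero polynomial $P(h_i)$ lies in $\mathfrak p$. Otherwise Lemma~\ref{lem4-2} and primality would make $h_i$ constant on $C$.
\item Localize at $\mathcal S_i=\{P(h_i):P\ne0\}$. There \eqref{eq4-5} makes $e_i$ bijective, with inverse $q_i(h_i)^{-1}f_i$. Hence $\mathcal T_i$ permutes the finitely many associated primes of the localized module.
\end{itemize}

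Your one-step condition on $q_i(h_i)$ and $q_i(h_i-2)$ does not by itself control all the iterates; the localization does.

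The genuine gap is the passage from this discrete symmetry to $\dim C\geq d-|I(C)|$. As you observe, the sup-norm argument cannot work, because $\mathcal T_i$ is an isometry and $|2Nk|\leq1$. The alternatives you offer are not arguments: ``finite union of discs'', ``the residual structure forces the dimension count'', and ``Noetherianity forces $g$ into the ideal''.

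The missing idea is that $Np^a\to0$ $p$-adically. Since $\mathcal T_i^{Np^a}(\mathfrak q)=\mathfrak q$ for every $a\geq0$ and $\mathcal T_i^{m}(y_\ell)=y_\ell+m\,\partial_i(y_\ell)$, the difference quotients $(\mathcal T_i^{Np^a}(f)-f)/(Np^a)$ lie in $\mathfrak q$. They converge in the PBW norm to $\partial_i f$, where $\partial_i=2\,\partial/\partial h_i$. Ideals in affinoid algebras are closed, so $\partial_i(\mathfrak q)\subseteq\mathfrak q$ for all $i\notin I(C)$.

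Now pass to $\widehat U_{T,n}/(h_i-\lambda_i:i\in I(C))\simeq E\langle V_1,\ldots,V_r\rangle$. The $h_i$ with $i\notin I(C)$ are affine coordinates with invertible linear part, so the image of $\mathfrak q$ is a proper ideal stable under every $\partial/\partial V_j$. Such an ideal is zero. For $0\ne f=\sum_\mu a_\mu V^\mu$ in it, choose $\nu$ of maximal total degree among the indices with $|a_\nu|$ maximal. Then $\frac1{\nu!}\partial^{\nu}f=a_\nu+(\text{terms of smaller Gauss norm})$ is a unit, a contradiction.

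Hence $\mathfrak q=(h_i-\lambda_i:i\in I(C))$. Since $\widehat U_{T,n}/\mathfrak q\hookrightarrow D_{T,n}/\mathfrak p$ is finite, $\dim C=d-|I(C)|$. An identity-principle argument along the translates by $2N\mathbf Z$, which accumulate $p$-adically, could replace the derivation step. You would still have to carry it out in all directions $i\notin I(C)$ simultaneously, and your proposal does neither.
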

\begin{proof}
Put $\widehat U=\widehat U_{T,n}$.
Let $\mathfrak p_C\subset D_{T,n}$ be the prime defining $C$. It is minimal
in $\operatorname{Supp}_{D_{T,n}}(Q)$, hence associated to $Q$.
Choose $0\ne q\in Q$ with $\operatorname{Ann}_{D_{T,n}}(q)=\mathfrak p_C$.
For $i\in I(C)$ we have $(h_i-\lambda_i)q=0$, so
Lemma~\ref{lem4-2} gives $i\in I_{\mathrm{alg}}$ and
$\lambda_i\in\Omega_i$. Thus $C\subseteq Z_{I(C),\lambda,n}$ and
$\dim C\leq d-|I(C)|$.

Set
\[
 \mathfrak q=\mathfrak p_C\cap\widehat U,\qquad
 \mathfrak a=(h_i-\lambda_i:i\in I(C))\widehat U.
\]

Since $D_{T,n}$ is finite over $\widehat U$, the module $Q$ is finite over
$\widehat U$. By the definition of $\mathfrak q$,
$\operatorname{Ann}_{\widehat U}(q)=\mathfrak q$.
We prove that $\mathfrak a=\mathfrak q$ in three steps.

\emph{Step 1.} We prove that, for every $i\notin I(C)$, some
$N_i\geq1$ satisfies $\mathcal T_i^{N_i}(\mathfrak q)=\mathfrak q$.
Fix $i\notin I(C)$ and set
$\mathcal S_i=\{P(h_i):0\ne P\in E[t]\}$.
If $P(h_i)\in\mathfrak p_C$ for a nonzero $P\in E[t]$, then $P(h_i)q=0$.
Lemma~\ref{lem4-2} gives $i\in I_{\mathrm{alg}}$ and
\[
 \prod_{\lambda\in\Omega_i}(h_i-\lambda)q=0.
\]
Since $\operatorname{Ann}_{D_{T,n}}(q)=\mathfrak p_C$ is prime, some
$h_i-\lambda$ belongs to $\mathfrak p_C$, contradicting $i\notin I(C)$.
Thus $\mathcal S_i\cap\mathfrak p_C=\varnothing$, and hence
$\mathcal S_i\cap\mathfrak q=\varnothing$.
The identity $\mathcal T_i(h_i)=h_i+2$ shows that $\mathcal T_i$ preserves
$\mathcal S_i$. Put $\widehat U_i=\mathcal S_i^{-1}\widehat U$ and
$Q_i=\mathcal S_i^{-1}Q$. On $Q_i$, $e_i$ is $\mathcal T_i^{-1}$-semilinear over
$\widehat U_i$ by \eqref{eq4-4}.
Both $q_i(h_i)$ and $q_i(h_i-2)$ are units in $\widehat U_i$.
The identities in \eqref{eq4-5} show that $q_i(h_i)^{-1}f_i$ is the inverse of $e_i$:
\[
 \begin{aligned}
 (q_i(h_i)^{-1}f_i)e_i&=q_i(h_i)^{-1}q_i(h_i)=1,\\
 e_i(q_i(h_i)^{-1}f_i)&=q_i(h_i-2)^{-1}e_if_i=1.
 \end{aligned}
\]
In particular,
\[
 a\,e_i(v)=e_i(\mathcal T_i(a)v)\qquad(a\in\widehat U_i,\ v\in Q_i).
\]
Since $e_i$ is bijective,
\[
 \operatorname{Ann}_{\widehat U_i}(e_i(v))
 =\mathcal T_i^{-1}\bigl(\operatorname{Ann}_{\widehat U_i}(v)\bigr).
\]
Thus $\mathcal T_i$ permutes $\operatorname{Ass}_{\widehat U_i}(Q_i)$,
which is finite by \cite[Lem. 10.63.5]{bib14}.
The nonzero element $q/1\in Q_i$ has annihilator
$\mathfrak q\widehat U_i$, so this prime belongs to that set.
Hence some $N_i\geq1$ satisfies
\[
 \mathcal T_i^{N_i}(\mathfrak q\widehat U_i)=\mathfrak q\widehat U_i.
\]
Since $\mathcal T_i(\mathcal S_i)=\mathcal S_i$ and primes disjoint from
$\mathcal S_i$ are recovered by contraction to $\widehat U$, we obtain
\[
 \begin{aligned}
 \mathcal T_i^{N_i}(\mathfrak q)
 &=\mathcal T_i^{N_i}(\mathfrak q\widehat U_i)\cap\widehat U\\
 &=(\mathfrak q\widehat U_i)\cap\widehat U=\mathfrak q.
 \end{aligned}
\]

\emph{Step 2.} Write $y_\ell=y_{\ell,n}$ and use the weighted PBW norm.
Define the continuous derivation
\[
 \partial_i=2\frac{\partial}{\partial h_i}
 =\sum_{\ell=1}^{d}2p^{n+b}\sigma_i(\xi_\ell)\eta_i
       \frac{\partial}{\partial y_\ell}\qquad\text{on }\widehat U.
\]
We prove that $\partial_i(\mathfrak q)\subseteq\mathfrak q$ for
$i\notin I(C)$. The displayed expression for $\partial_i$ follows from \eqref{eq4-1}, and the derivatives $\partial/\partial y_\ell$ act termwise. In particular,
$\partial_i(h_j)=2\delta_{ij}$.
Put $K_i=\max_\ell p^{w_\ell}|\partial_i(y_\ell)|$.
For $m_a=N_ip^a$ with $a\geq0$, the translation formula is
\[
 \mathcal T_i^{m_a}(y_\ell)=y_\ell+m_a\partial_i(y_\ell).
\]
If $|m_a|K_i\leq1$, the binomial theorem gives
\[
 \left\|\frac{\mathcal T_i^{m_a}(f)-f}{m_a}-\partial_i(f)\right\|
 \leq |m_a|K_i^2\|f\|\xrightarrow[a\to\infty]{}0
 \qquad(f\in\widehat U).
\]
For a monomial $y^\nu$, a term containing $k\geq2$ factors
$m_a\partial_i(y_\ell)$ has, after division by $m_a$, norm at most
$|m_a|K_i^2(|m_a|K_i)^{k-2}\|y^\nu\|$.
The estimate extends from polynomials to $\widehat U$ by density.
For $f\in\mathfrak q$, every difference quotient belongs to $\mathfrak q$.
This ideal is closed by \eqref{eq4-3} and \cite[Cor. 5.2.7/2]{bib16},
so $\partial_i(\mathfrak q)\subseteq\mathfrak q$ for every $i\notin I(C)$.

\emph{Step 3.} We prove that the image of $\mathfrak q$ in
$\widehat U/\mathfrak a$ is zero. Put $r=d-|I(C)|$.
Since $\mathfrak a\subseteq\mathfrak q$ and
$\mathfrak q$ is proper, Lemma~\ref{lem4-1} gives
\[
 B=\widehat U/\mathfrak a\simeq E\langle V_1,\ldots,V_r\rangle.
\]
The proper ideal $\overline{\mathfrak q}=\mathfrak q/\mathfrak a$ is zero
if $r=0$, since then $B=E$. Assume $r>0$.
For $i\notin I(C)$, the derivation $\partial_i$ annihilates the
generators of $\mathfrak a$ and therefore descends to $B$.
By the successive substitutions in the proof of Lemma~\ref{lem4-1}, the images
of the $h_i$, $i\notin I(C)$, are affine linear functions of $V_1,\ldots,V_r$
with invertible coefficient matrix. The induced derivations $\partial_i$ therefore span the coordinate
derivations $\partial/\partial V_j$ over $E$. By Step 2,
$\overline{\mathfrak q}$ is stable under all these coordinate derivations.
Suppose $0\ne f=\sum_\mu a_\mu V^\mu\in\overline{\mathfrak q}$.
Among the indices for which $|a_\nu|$ is maximal, choose $\nu$ with
maximal total degree; there are only finitely many such indices because
$|a_\mu|\to0$. Then
\[
 \frac{1}{\nu!}
 \frac{\partial^{|\nu|}f}
      {\partial V_1^{\nu_1}\cdots\partial V_r^{\nu_r}}
 =a_\nu+\sum_{\beta\ne0}
       \binom{\nu+\beta}{\nu}a_{\nu+\beta}V^\beta
 \in\overline{\mathfrak q},
 \qquad \nu!=\prod_{j=1}^r\nu_j!.
\]
The binomial coefficients have absolute value at most one. By the
choice of $\nu$ and the decay of the coefficients, the nonconstant part has Gauss
norm strictly less than $|a_\nu|$. Thus this element is a unit,
contradicting the properness of $\overline{\mathfrak q}$.
Consequently $\overline{\mathfrak q}=0$ in all cases, so
$\mathfrak q=\mathfrak a$.

The inclusion $\widehat U/\mathfrak q\hookrightarrow D_{T,n}/\mathfrak p_C$
is finite, so \cite[Lem. 10.112.4]{bib14} gives
\[
 \dim C=\dim(\widehat U/\mathfrak q)=r
       =\dim Z_{I(C),\lambda,n}.
\]
By \cite[Cor. 2.2.7]{bib15}, $C$ is an irreducible component of
$Z_{I(C),\lambda,n}$.
\end{proof}

Consequently, every nonzero $D_{H,n}$-subquotient $Q$ of $M_n$ satisfies
\[
 \dim\operatorname{Supp}_{D_{T,n}}Q\geq d-|I_{\mathrm{alg}}|.
\]

\subsection{Geometric irreducibility of \texorpdfstring{$Z_{I,\lambda,n}$}{Z(I,lambda,n)}}
\label{sec4-3}
We use the following elementary Kummer criterion.

\begin{lemma}\label{lem4-7}
Let $A$ be a domain of characteristic zero, let $K=\operatorname{Frac}(A)$,
and let $N=p^n$ with $n\geq1$.
Suppose that $K$ contains all $N$th roots of unity and that
$u_1,\ldots,u_d\in A^\times$ satisfy
\[
 \prod_{\ell=1}^du_\ell^{a_\ell}\notin K^{\times p}
 \quad\text{for every }(a_1,\ldots,a_d)
 \in\{0,\ldots,p-1\}^d\setminus\{(0,\ldots,0)\},
\]
where $K^{\times p}=\{f^p:f\in K^\times\}$. Then
\[
 B=A[U_1,\ldots,U_d]/(U_\ell^N-u_\ell:1\leq\ell\leq d)
\]
is a domain, free of rank $N^d$ over $A$.
\end{lemma}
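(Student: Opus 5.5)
Freeness is immediate: $B$ is obtained from $A$ by adjoining, one variable at a time, a root of the monic polynomial $U_\ell^N-u_\ell$. So $B$ is free over $A$ with basis the monomials $U^{a}=U_1^{a_1}\cdots U_d^{a_d}$, $0\leq a_\ell<N$, and has rank $N^d$. Because $B$ is free, hence torsion-free, over $A$, the map $B\to B\otimes_AK=K[U_1,\ldots,U_d]/(U_\ell^N-u_\ell)$ is injective. It therefore suffices to show that $B_K:=B\otimes_AK$ is a field, i.e.\ a domain.

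To do this I would apply Kummer theory to the subgroup $\Gamma\subseteq K^\times/K^{\times N}$ generated by the classes of $u_1,\ldots,u_d$. Each $u_\ell$ is a unit in $A$, so each polynomial $U^N-u_\ell$ is separable in characteristic zero, and $K$ contains $\mu_N$. Let $L$ be a splitting field containing chosen roots $\alpha_\ell$ of $U^N=u_\ell$. Kummer theory gives $[K(\alpha_1,\ldots,\alpha_d):K]=|\Gamma|$. Next I show that $\Gamma\simeq(\Z/N)^d$. Suppose $\prod_\ell u_\ell^{c_\ell}\in K^{\times N}$ with $(c_\ell)\not\equiv0\bmod N$. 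Let $p^s$ be the largest power of $p$ dividing every $c_\ell$, so $s<n$. Write $c_\ell=p^s c'_\ell$ with some $c'_\ell$ prime to $p$. Then $\bigl(\prod u_\ell^{c'_\ell}\bigr)^{p^s}=g^{N}$ for some $g\in K^\times$. Since $\mu_{p^s}\subseteq K$, this gives $\prod u_\ell^{c'_\ell}=\zeta g^{N/p^s}$ with $\zeta\in\mu_{p^s}$.

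The main obstacle is this root-of-unity factor $\zeta$. My first attempt is to note that $\mu_{p^s}\subseteq\mu_{p^{n-1}}\cdot 1$ and that $\zeta=\omega^{p}$ for some $\omega\in\mu_{p^{s+1}}\subseteq\mu_N\subseteq K$, because $s+1\leq n$. Then $\zeta g^{N/p^s}$ is a $p$th power in $K$, since $N/p^s$ is divisible by $p$. Reducing the exponents $c'_\ell$ modulo $p$ (the difference contributes a $p$th power), we get $\prod u_\ell^{a_\ell}\in K^{\times p}$ with $a_\ell\equiv c'_\ell\bmod p$ in $\{0,\ldots,p-1\}$, not all zero. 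This contradicts the hypothesis. Hence $|\Gamma|=N^d$, and so $[K(\alpha_1,\ldots,\alpha_d):K]=N^d$.

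Finally, the $K$-algebra map $B_K\to K(\alpha_1,\ldots,\alpha_d)$ sending $U_\ell\mapsto\alpha_\ell$ is surjective. Its source has dimension $N^d$ by the freeness step, and its target also has dimension $N^d$ by the previous paragraph. So the map is an isomorphism, $B_K$ is a field, and $B\subseteq B_K$ is a domain.
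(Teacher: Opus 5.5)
Your proof is correct and follows the same strategy as the paper: freeness from the monic relations, injectivity of $B\to B\otimes_AK$, a Kummer-theoretic proof that $[K(\alpha_1,\ldots,\alpha_d):K]=N^d$, and a dimension count. The only difference is the side of the Kummer duality you use. You show that $(\Z/N)^d\to K^\times/K^{\times N}$ is injective, absorbing the root of unity $\zeta$ via $\mu_{p^{s+1}}\subseteq K$. The paper instead takes an order-$p$ character of the cokernel of $\operatorname{Gal}(K'/K)\hookrightarrow\mu_N^d$ and obtains the forbidden $p$th power directly, which avoids citing the full Kummer degree formula.
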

\begin{proof}
The monic relations give the $A$-basis $U_1^{a_1}\cdots U_d^{a_d}$,
$0\leq a_\ell<N$.
Choose $v_\ell$ in an algebraic closure of $K$ with $v_\ell^N=u_\ell$,
and put $K'=K(v_1,\ldots,v_d)$.
Since $K$ contains all $N$th roots of unity, $K'/K$ is Galois and
\[
 \operatorname{Gal}(K'/K)\hookrightarrow(\mu_N)^d,\qquad
 g\longmapsto\bigl(g(v_\ell)/v_\ell\bigr)_\ell,
\]
where $\mu_N\subset K^\times$ is the group of $N$th roots of unity.
If this map is not surjective, its cokernel is a nonzero finite abelian
$p$-group and therefore admits a nontrivial character to $\mu_p$.
Its pullback to $(\mu_N)^d$ has the form
\[
 (\zeta_1,\ldots,\zeta_d)\longmapsto
 \prod_{\ell=1}^d\zeta_\ell^{Na_\ell/p}
\]
for a nonzero tuple $(a_\ell)\in\{0,\ldots,p-1\}^d$.
Thus $f=\prod_\ell v_\ell^{Na_\ell/p}$ is fixed by
$\operatorname{Gal}(K'/K)$ and belongs to $K$, but
$f^p=\prod_\ell u_\ell^{a_\ell}$, contrary to the hypothesis.
Consequently $[K':K]=N^d$.
The surjection $B\otimes_AK\to K'$ defined by $U_\ell\mapsto v_\ell$
is an isomorphism, since both sides have $K$-dimension $N^d$.
Freeness over $A$ makes $B\to B\otimes_AK$ injective, so $B$ is a domain.
\end{proof}

\begin{proposition}\label{prop4-8}
Let $I\subseteq\{1,\ldots,d\}$ and $\lambda\in E^I$.
For every $n\geq0$, each nonempty $Z_{I,\lambda,n}$ is smooth of
dimension $d-|I|$ and is geometrically irreducible if $|I|<d$.
\end{proposition}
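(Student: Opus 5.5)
Smoothness and the dimension count follow from what is already established. By Lemma~\ref{lem4-1}, the nonzero quotient $B:=\widehat U_{T,n}/(h_i-\lambda_i:i\in I)$ is isomorphic to a Tate algebra $E\langle V_1,\ldots,V_r\rangle$ with $r=d-|I|$. By \eqref{eq4-2}, the coordinate algebra is
\[
 \mathcal O(Z_{I,\lambda,n})\simeq B[X_1,\ldots,X_d]/(X_\ell^{p^n}-\exp(\bar y_\ell):1\leq\ell\leq d),
\]
where $\bar y_\ell$ is the image of $y_{\ell,n}$ in $B$. This algebra is finite étale over $B$. Hence $Z_{I,\lambda,n}$ is smooth of pure dimension $r$, and the same holds after any complete coefficient extension. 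For geometric irreducibility we may pass to the completed algebraic closure $\C_p$, or to a large finite extension $L\supseteq\mu_{p^n}$, since irreducibility over every finite extension suffices. It then remains to show that the above algebra is a domain over $L$. For $n=0$ the algebra equals $B$ itself, so we may assume $n\geq1$.

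We apply Lemma~\ref{lem4-7} with $A=B_L=L\langle V_1,\ldots,V_r\rangle$, which is a domain, $N=p^n$, and $u_\ell=\exp(\bar y_\ell)$. The $u_\ell$ are units because the exponentials converge. It suffices to check that for every nonzero $a\in\{0,\ldots,p-1\}^d$, the element $u^a=\exp(\sum_\ell a_\ell\bar y_\ell)$ is not a $p$th power in $K=\operatorname{Frac}(A)$. Each $\bar y_\ell$ is affine linear in the $V_j$, so $g_a:=\sum_\ell a_\ell\bar y_\ell$ is an affine function $c_a+\sum_j\alpha_{a,j}V_j$.

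First we check that its linear part is nonzero. By \eqref{eq4-1}, $\sum_\ell a_\ell y_{\ell,n}=p^{n+b}\sum_i\sigma_i(\sum_\ell a_\ell\xi_\ell)\eta_ih_i$. Since the $\xi_\ell$ form a $\Z_p$-basis of $\mathcal O_F$ and $a\not\equiv0\bmod p$, the element $\xi_a:=\sum_\ell a_\ell\xi_\ell$ is nonzero. All its conjugates $\sigma_i(\xi_a)$ are therefore nonzero. Because $|I|<d$, some coordinate $h_i$ with $i\notin I$ survives in $B$ as a nonconstant affine function. So the linear part of $g_a$ is nonzero.

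Now suppose $\exp(g_a)=f^p$ with $f\in K$. Since $A$ is a UFD (Tate algebras are factorial) and $\exp(g_a)$ is a unit, $f$ lies in $A^\times$. Differentiating gives $\partial_j\exp(g_a)=\alpha_{a,j}\exp(g_a)=pf^{p-1}\partial_jf$, so $\partial_jf/f=\alpha_{a,j}/p$ for all $j$. Consequently $f\exp(-g_a/p)$ has vanishing derivatives wherever $\exp(-g_a/p)$ converges. The obstacle is that $\exp(-g_a/p)$ need not converge on the whole polydisc, and this is the main step. I would handle it with a Gauss-norm and radius argument. A unit of $L\langle V\rangle$ has the form $\gamma(1+\text{small})$ with $\gamma\in L^\times$. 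The series $\exp(\alpha V_j/p)$ converges on the unit disc only when $|\alpha/p|<p^{-1/(p-1)}$. Taking $b$ large makes $|\alpha_{a,j}|$ small, which already forces convergence of $\exp(g_a/p)$ on the closed unit polydisc. In that regime, however, $\exp(g_a)$ might genuinely be a $p$th power, namely of $\exp(g_a/p)$. So this naive route would instead prove reducibility.

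I therefore suspect the correct normalization is the one built into $B$. In the coordinates of Lemma~\ref{lem4-1}, after dividing by $\gamma^T_\ell$, the variables $V_j$ are of size comparable to $Y_\ell=(\gamma_\ell^T)^{-1}y_{\ell,n}$. Hence $\bar y_\ell$ has coefficients of absolute value exactly $p^{-w_\ell}$ with $w_\ell<p/(p-1)$. In the direction where $|\alpha_{a,j}|$ is maximal, one gets $|\alpha_{a,j}/p|\geq p^{1-w_\ell}>p^{-1/(p-1)}$, using $w_\ell<p/(p-1)$ (the strict saturation bound \eqref{eq2-1}) and the integrality of the $a_\ell$. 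Then $\exp(\alpha V_j/p)$ diverges at some point of the closed unit disc over $\C_p$. Restrict $f$ to the line in that coordinate direction through an $L$-point. This gives a unit $f_1(V)\in L\langle V\rangle$ with $f_1'=(\alpha/p)f_1$. Comparing coefficients yields $f_1=f_1(0)\exp(\alpha V/p)$ as formal series, and this contradicts convergence on the unit disc. With this verified for all nonzero $a$ (with $p>2$, or in general from the bound $w_\ell<p/(p-1)$), Lemma~\ref{lem4-7} shows that $\mathcal O(Z_{I,\lambda,n})\widehat\otimes L$ is a domain for all large $L$. This gives geometric irreducibility.
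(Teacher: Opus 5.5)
Your strategy is the paper's. You pass to finite $L\supseteq\mu_{p^n}$ and apply the Kummer criterion of Lemma~\ref{lem4-7} over the Tate algebra $A$ of Lemma~\ref{lem4-1}. Normality (your factoriality) makes a putative $p$th root $f$ a unit of $A$. You then restrict to a one-dimensional disc and note that the formal solution $\exp(\alpha V/p)$ of $g^p=\exp(\alpha V)$, $g(0)=1$, has coefficients not tending to zero once $v_p(\alpha)<p/(p-1)$. The gap is this last inequality, which is where the particular choice of weights enters decisively, and you assert it rather than prove it.

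Your justification does not work as written. After the eliminations in Lemma~\ref{lem4-1}, an eliminated $\bar Y_\ell$ is an affine combination of the $V_j$ with coefficients of absolute value at most one. So the coefficients of $\bar y_\ell$ are only bounded \emph{above} by $p^{-w_\ell}$, not equal to it. Moreover, $\alpha_{a,j}$ is a sum over $\ell$ of $a_\ell$ times such coefficients. Neither the integrality of the $a_\ell$ nor $w_\ell<p/(p-1)$ rules out cancellation. What you need is a \emph{lower} bound for $\max_j|\alpha_{a,j}|$, that is, for the norm of the linear part of $\sum_\ell a_\ell y_{\ell,n}$ modulo the linear parts of the $h_i$ with $i\in I$. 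For this, $\sum_\ell a_\ell\xi_\ell\neq0$ is not enough; you need $u=\sum_\ell a_\ell\xi_\ell\notin p\mathcal O_F$. Your worry about large $b$ is a red herring: the weights in \eqref{eq3-4} do not depend on $b$, and in the normalized coordinates nothing becomes small.

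The paper's choice of disc supplies the missing step.
\begin{itemize}
\item Fix $i\notin I$, an $L$-point $x$, and $\gamma$ with $v_p(\gamma)=w_*$, and send $y_\ell\mapsto y_\ell(x)+\gamma\sigma_i(\xi_\ell)X$.
\item By \eqref{eq4-1} this moves only $h_i$, so it respects $h_j=\lambda_j$ for $j\in I$. It is bounded because $|\gamma\sigma_i(\xi_\ell)|=p^{-w_\ell}$.
\item Along this disc, $\sum_\ell a_\ell y_\ell$ has linear coefficient $c=\gamma\sigma_i(u)$.
\item Since $u\in\mathcal O_F\setminus p\mathcal O_F$, one has $v_p(u)\leq(e-1)/e$. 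The value $w_*=1/(p-1)+1/(4e)$ then gives $v_p(c)\leq p/(p-1)-3/(4e)<p/(p-1)$.
\end{itemize}
Evaluating linear forms on this tangent vector also gives what your own route needs. The vector has $|Y_\ell|=1$ for every $\ell$ and kills the $h_j$ with $j\in I$, so $\max_j|\alpha_{a,j}|\geq|c|$. Hence your restriction to a coordinate direction $V_j$ of maximal $|\alpha_{a,j}|$ also works. With this step inserted, the rest of your argument goes through.
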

\begin{proof}
The finite \'{e}tale presentation reduces irreducibility to the Kummer
criterion. To rule out a $p$th root in the relevant fraction field, we
restrict to a one-dimensional disc and show that the coefficients of
such a root cannot tend to zero.
Fix $n\geq0$ with $Z_{I,\lambda,n}\ne\varnothing$, and put
$s=d-|I|$ and $N=p^n$.
Let $L/E$ be any finite extension containing $\mu_N$.
By \cite[Thm. 3.4.2]{bib15}, proving irreducibility after every such
extension suffices for geometric irreducibility.
Use the torus basis $t_\ell=\exp(p^b\xi_\ell\vartheta)$ and the weights
$w_\ell=w_*+v_p(\xi_\ell)$ fixed in Subsection~\ref{sec4-1}. Write
\[
 \begin{gathered}
 y_\ell=y_{\ell,n},\qquad
 \widehat U=L\otimes_E\widehat U_{T,n},\\
 \widehat U_{I,\lambda}=\widehat U/(h_i-\lambda_i:i\in I),\qquad
 B=L\otimes_E\mathcal O(Z_{I,\lambda,n}).
 \end{gathered}
\]
Since $B\ne0$, the algebra $\widehat U_{I,\lambda}$ is nonzero.
By Lemma~\ref{lem4-1} over $L$, it is a Tate algebra in $s$ variables, hence a
normal domain \cite[Thm. 5.2.6/2]{bib16}.
Quotienting \eqref{eq4-2} gives
\begin{equation}\label{eq4-8}
 B\simeq\frac{\widehat U_{I,\lambda}[X_1,\ldots,X_d]}
 {(X_\ell^N-\exp(y_\ell):1\leq\ell\leq d)}.
\end{equation}
Thus $B$ is finite \'{e}tale and free of rank $N^d$ over
$\widehat U_{I,\lambda}$. The smoothness and dimension assertions were
established after \eqref{eq4-6}.
For irreducibility, assume $s>0$. When $n=0$, the presentation gives
$B=\widehat U_{I,\lambda}$. It remains to consider $n\geq1$.

Put $K=\operatorname{Frac}(\widehat U_{I,\lambda})$.
In the quotient norm on $\widehat U_{I,\lambda}$ induced by the
weighted PBW norm on $\widehat U$, each $y_\ell$ has norm at most
$p^{-w_\ell}<p^{-1/(p-1)}$, so
$\prod_\ell\exp(y_\ell)^{a_\ell}=\exp(\sum_\ell a_\ell y_\ell)$.
By Lemma~\ref{lem4-7}, it is enough to prove
\begin{equation}\label{eq4-9}
 \exp\!\left(\sum_{\ell=1}^da_\ell y_\ell\right)\notin K^{\times p}
 \quad\text{for every }(a_1,\ldots,a_d)
 \in\{0,\ldots,p-1\}^d\setminus\{(0,\ldots,0)\}.
\end{equation}
Suppose, contrary to \eqref{eq4-9}, that
\[
 f^p=\exp\!\left(\sum_\ell a_\ell y_\ell\right)
 \qquad\text{for some }f\in K^\times.
\]
The displayed monic equation and normality give $f\in\widehat U_{I,\lambda}$.
Moreover, $f$ is a unit because $f^p$ is a unit.

Choose an $L$-rational point $x$ of $\operatorname{Sp}(\widehat U_{I,\lambda})$,
which exists by Lemma~\ref{lem4-1} over $L$.
Choose $i\notin I$ and $\gamma\in E^\times$ with $v_p(\gamma)=w_*$.
This valuation is chosen so that the exponential on the disc below
converges while its formal $p$th root has coefficients that do not
tend to zero.
The assignments
\begin{equation}\label{eq4-10}
 \varphi:\widehat U_{I,\lambda}\longrightarrow L\langle X\rangle,
 \qquad y_\ell\longmapsto y_\ell(x)+\gamma\sigma_i(\xi_\ell)X
\end{equation}
define a continuous $L$-algebra homomorphism.
Indeed, \eqref{eq4-1} gives
\[
 h_j\longmapsto h_j(x)+\delta_{ij}\frac{\gamma X}{p^{n+b}\eta_i},
\]
so the equations $h_j=\lambda_j$ for $j\in I$ are preserved.
The bounds
\[
 |y_\ell(x)|\leq p^{-w_\ell},\qquad
 |\gamma\sigma_i(\xi_\ell)|=p^{-w_\ell}
\]
make substitution bounded for the weighted PBW norm.
Put $u=\sum_\ell a_\ell \xi_\ell$.
Since $(\xi_\ell)$ is a $\mathbf Z_p$-basis of $\mathcal O_F$ and the
tuple $(a_\ell)$ is nonzero modulo $p$,
\[
 u\in\mathcal O_F\setminus p\mathcal O_F,\qquad
 0\leq v_p(u)\leq\frac{e-1}{e}.
\]
Since $f(x)\ne0$, the element $g=\varphi(f)/f(x)\in L\langle X\rangle$
satisfies
\[
 g(X)^p=
 \frac{\exp\bigl(\sum_\ell a_\ell y_\ell(x)+\gamma\sigma_i(u)X\bigr)}
 {\exp\bigl(\sum_\ell a_\ell y_\ell(x)\bigr)}
 =\exp\bigl(\gamma\sigma_i(u)X\bigr).
\]
Put $c=\gamma\sigma_i(u)$ and $\alpha=v_p(c)$.
The choice of $\gamma$ and the valuation bound on $u$ give
\[
 \frac1{p-1}<\alpha=w_*+v_p(u)
 \leq\frac p{p-1}-\frac3{4e}<\frac p{p-1}.
\]

In $L[[X]]$, differentiating $g^p=\exp(cX)$ gives $g'=(c/p)g$.
Since $g(0)=1$, comparison of coefficients gives
\[
 g(X)=\sum_{k\geq0}\frac{(c/p)^k}{k!}X^k
 \qquad\text{in }L[[X]].
\]
For $k=p^r$, the identity $v_p((p^r)!)=(p^r-1)/(p-1)$ gives
\begin{equation}\label{eq4-11}
 v_p\!\left(\frac{(c/p)^{p^r}}{(p^r)!}\right)
 =p^r\left(\alpha-\frac p{p-1}\right)+\frac1{p-1}
 \longrightarrow-\infty.
\end{equation}
This contradicts the requirement that the coefficients of an element of
$L\langle X\rangle$ tend to zero.
This proves \eqref{eq4-9}, so Lemma~\ref{lem4-7} shows that $B$ is a domain.
Since $L$ was any finite extension containing $\mu_N$,
$Z_{I,\lambda,n}$ is geometrically irreducible when $|I|<d$.
\end{proof}

Combining Propositions~\ref{prop4-5} and~\ref{prop4-8} determines the
possible supports at a fixed radius. For a $D_{H,n}$-subquotient $Q$ of
$M_n$, each irreducible component of its support over $D_{T,n}$ is an
irreducible component of some $Z_{I,\lambda,n}$, with
$I\subseteq I_{\mathrm{alg}}$ and $\lambda_i\in\Omega_i$. If $|I|<d$, it
is the whole $Z_{I,\lambda,n}$; if $|I|=d$, it is a point of
$Z_{I,\lambda,n}$. The number of possible subspaces $Z_{I,\lambda,n}$
with $|I|<d$ is bounded independently of $n$. The number of possible
points for $|I|=d$ may grow with $n$.

\subsection{Counting successive quotients using generic ranks}
\label{sec4-4}
For $1\leq s\leq d$ and $n\geq n_*$, let
\begin{equation}
 \begin{split}
 \mathcal Z_{s,n}
  &=\left\{Z_{I,\lambda,n}:
      \begin{array}{l}
      I\subseteq I_{\mathrm{alg}},\ |I|=d-s,\\
      \lambda\in\prod_{i\in I}\Omega_i,\quad
      Z_{I,\lambda,n}\ne\varnothing
      \end{array}\right\},\\
 N_s&=\sum_{\substack{I\subseteq I_{\mathrm{alg}}\\|I|=d-s}}
           \prod_{i\in I}|\Omega_i|.
 \end{split}
 \label{eq4-12}
\end{equation}
Thus $|\mathcal Z_{s,n}|\leq N_s$. The constants $N_s$ depend only on
$H$ and $M$. In particular, $N_{d}=1$ and
$\mathcal Z_{d,n}=\{\operatorname{Sp}D_{T,n}\}$.
Every member of $\mathcal Z_{s,n}$ is geometrically irreducible of
dimension $s$ by Proposition~\ref{prop4-8}.
For $Z=Z_{I,\lambda,n}\in\mathcal Z_{s,n}$, put
\[
 \mathfrak p_{Z,n}=(h_i-\lambda_i:i\in I)D_{T,n},\qquad
 K_{Z,n}=\operatorname{Frac}(D_{T,n}/\mathfrak p_{Z,n}).
\]
For a finite $D_{T,n}$-module $Q$ with
$\mathfrak p_{Z,n}Q_{\mathfrak p_{Z,n}}=0$, define
\begin{equation}
 \operatorname{rank}_{Z,n}(Q)
   =\dim_{K_{Z,n}}Q_{\mathfrak p_{Z,n}}.
 \label{eq4-13}
\end{equation}

\begin{lemma}\label{lem4-9}
Let $1\leq s<d$, $n\geq n_*$, and let $Q$ be a
$D_{H,n}$-subquotient of $M_n$ with $d_{D_{H,n}}(Q)\leq s$.
For every $Z\in\mathcal Z_{s,n}$, the rank in \eqref{eq4-13} is defined
and is at most $C_{\mathrm{fib}}$. These ranks are additive in short exact
sequences of such subquotients. If $d_{D_{H,n}}(Q)=s$, then
$\operatorname{rank}_{Z,n}(Q)>0$ for some $Z\in\mathcal Z_{s,n}$.
\end{lemma}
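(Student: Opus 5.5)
We argue at the fixed radius $r_n$, using Propositions~\ref{prop3-6}, \ref{prop4-5} and~\ref{prop4-8} together with Theorem~\ref{thm4-3}.

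\emph{Step 1: the rank is defined.} Let $Q$ be a $D_{H,n}$-subquotient of $M_n$ with $d_{D_{H,n}}(Q)\leq s$, and fix $Z=Z_{I,\lambda,n}\in\mathcal Z_{s,n}$, so $|I|=d-s$. By Proposition~\ref{prop4-8}, $Z$ is geometrically irreducible of dimension $s$. Since $s<d$, we have $I\ne\varnothing$. The ideal $\mathfrak p=\mathfrak p_{Z,n}$ is prime, because $\mathcal O(Z)$ is a domain, and it has dimension $s$.

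\emph{Step 2: $Q_{\mathfrak p}$ has finite length and is killed by $\mathfrak p$.} By Proposition~\ref{prop3-6}, $\dim\operatorname{Supp}_{D_{T,n}}Q=d_{D_{H,n}}(Q)\leq s$. Hence either $\mathfrak p\notin\operatorname{Supp}Q$, in which case $Q_{\mathfrak p}=0$ and the rank is zero, or $\mathfrak p$ is a minimal prime of $\operatorname{Supp}Q$. In the second case $Q_{\mathfrak p}$ has finite length over the local ring $(D_{T,n})_{\mathfrak p}$. The localization $Q_{\mathfrak p}$ inherits the commuting $\mathfrak{sl}_2$-actions with indices $i\in I$: the operators $h_i$ act by multiplication, and $e_i,f_i$ are semilinear via \eqref{eq4-4}. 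Every element of $Q_{\mathfrak p}$ is killed by a power of $h_i-\lambda_i$. We then apply Lemma~\ref{lem4-2} over the field $L=\kappa(\mathfrak p)$, or directly to a finite $\mathfrak p$-primary piece. This gives $(h_i-\lambda_i)^r=0\Rightarrow h_i-\lambda_i=0$ on $Q_{\mathfrak p}$. The same conclusion follows from the Lemma~\ref{lem4-2} argument applied to $Q_{\mathfrak p}$, viewed as a finite module over $R=(D_{T,n})_{\mathfrak p}$ with $h_i\in R$. Localization is compatible with $e_i,f_i$, since these act semilinearly through $\mathcal T_i^{\pm1}$, which fix $h_j-\lambda_j$ for $j\ne i$. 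I expect this compatibility to be the main obstacle: $\mathcal T_i$ does not preserve $\mathfrak p$ when $i\in I$. To handle it, I would apply Lemma~\ref{lem4-2} to $Q$ itself, as a finite module over $R=D_{T,n}$. This yields semisimplicity of $h_i$ on $Q_{\mathrm{tor}}$, which contains the elements killed by powers of $h_i-\lambda_i$. Localizing the sub $\{q:(h_i-\lambda_i)^rq=0\}$ at $\mathfrak p$ exhausts $Q_{\mathfrak p}$, so $\mathfrak pQ_{\mathfrak p}=0$. The rank is therefore a dimension over $K_{Z,n}$.

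\emph{Step 3: the bound, additivity and positivity.} Since $Q_{\mathfrak p}$ is a finite-dimensional $K_{Z,n}$-vector space, the rank equals the fiber dimension at the generic point of $Z$. By upper semicontinuity of fiber dimension for the coherent sheaf $Q|_Z$ on the irreducible space $Z$, it is at most $\mu_x(Q)$ for any point $x\in Z$. Theorem~\ref{thm4-3} bounds $\mu_x(Q)$ by $C_{\mathrm{fib}}$. Concretely, choose $0\ne a\in\mathcal O(Z)$ making $Q/\mathfrak pQ$ free over $\mathcal O(Z)_a$, and pick $x$ with $a(x)\ne0$. Additivity holds because localization is exact and, by Step 2, all three terms are $K_{Z,n}$-vector spaces. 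For positivity, suppose $d_{D_{H,n}}(Q)=s$ and take an $s$-dimensional component $C$ of the support. Proposition~\ref{prop4-5} gives $|I(C)|\leq d-s$, $I(C)\subseteq I_{\mathrm{alg}}$ and $\lambda_i\in\Omega_i$, with $\dim C=d-|I(C)|=s$. Hence $|I(C)|=d-s$. Since $s\geq1$, $C$ equals $Z_{I(C),\lambda,n}\in\mathcal Z_{s,n}$ by Proposition~\ref{prop4-8}. The prime of $C$ lies in the support, so $\operatorname{rank}_{Z,n}(Q)>0$ for this $Z$.
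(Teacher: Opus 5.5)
Your proposal is correct and follows the paper's argument. You discard the detour through an $\mathfrak{sl}_2$-action on $Q_{\mathfrak p}$, which, as you note, does not exist for $i\in I$. You then apply Lemma~\ref{lem4-2} to $Q$ itself and localize $Q[(h_i-\lambda_i)^N]=Q[h_i-\lambda_i]$, get additivity from exactness of localization, and get positivity from Propositions~\ref{prop4-5} and~\ref{prop4-8}, exactly as the paper does.

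The only difference is cosmetic, in the rank bound. The paper skips generic freeness: for every rigid point $x\in Z$, the $\mu_x(Q)$ generators of $Q_{\mathfrak m_x}$ given by Nakayama already generate $Q_{\mathfrak p}$.
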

\begin{proof}
Propositions~\ref{prop4-5} and~\ref{prop4-8} identify the $s$-dimensional
support components with members of $\mathcal Z_{s,n}$.
Fix $Z=Z_{I,\lambda,n}$ and write $A=D_{T,n}$ and
$\mathfrak p=\mathfrak p_{Z,n}$.
If $Q_{\mathfrak p}\ne0$, then $\mathfrak p$ is minimal in
$\operatorname{Supp}_A(Q)$: a prime in $\operatorname{Supp}_A(Q)$ strictly contained in
$\mathfrak p$ would have dimension at least $s+1$.
Thus $Q_{\mathfrak p}$ has finite length over $A_{\mathfrak p}$ and
$\mathfrak p^N Q_{\mathfrak p}=0$ for some $N\geq1$.
For $i\in I$, Lemma~\ref{lem4-2} gives
$Q[(h_i-\lambda_i)^N]=Q[h_i-\lambda_i]$.
These are kernels of $A$-linear multiplication maps, so exactness of
localization gives
\[
 Q_{\mathfrak p}[(h_i-\lambda_i)^N]
 =Q_{\mathfrak p}[h_i-\lambda_i].
\]
Hence every $h_i-\lambda_i$ annihilates $Q_{\mathfrak p}$.
The same conclusion is immediate if $Q_{\mathfrak p}=0$, so in all cases
$\mathfrak pQ_{\mathfrak p}=0$ and $Q_{\mathfrak p}$ is a
$K_{Z,n}$-vector space.
Choose a rigid point $x\in Z$, with maximal ideal $\mathfrak m_x$ and
residue field $\kappa(x)$. By Nakayama's lemma, $Q_{\mathfrak m_x}$
can be generated by $\mu_x(Q)$ elements. These elements also generate
$Q_{\mathfrak p}$ after localization. Theorem~\ref{thm4-3} therefore gives
\[
 \operatorname{rank}_{Z,n}(Q)\leq\mu_x(Q)\leq C_{\mathrm{fib}}.
\]
Exactness of localization proves additivity. If $Q$ has canonical
dimension $s$, localization at the prime of any $s$-dimensional support
component is nonzero,
which proves positivity.
\end{proof}

Put $b_s=C_{\mathrm{fib}}N_s$ for $1\leq s<d$ and $b_d=m$.
Set $B_0=0$ and, for $1\leq s\leq d$,
\[
 B_s=b_s+(1+b_s)B_{s-1}=\prod_{j=1}^{s}(1+b_j)-1.
\]
These constants depend only on $H$ and $M$.

\begin{theorem}\label{thm4-10}
Fix $1\leq s\leq d$ and $n\geq n_*$. Let $Q$ be a
$D_{H,n}$-subquotient of $M_n$ with $d_{D_{H,n}}(Q)\leq s$.
In any finite chain of $D_{H,n}$-submodules of $Q$, at most $b_s$
successive quotients have canonical dimension $s$, and at most $B_s$
have positive canonical dimension. These bounds are uniform in $n$.
\end{theorem}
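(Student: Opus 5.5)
The plan is induction on $s$, with the case $s<d$ handled by the additive ranks of Lemma~\ref{lem4-9} and the top case $s=d$ handled separately.

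Fix a finite chain $0=Q^{(0)}\subseteq Q^{(1)}\subseteq\cdots\subseteq Q^{(t)}=Q$ of $D_{H,n}$-submodules, and write $\bar Q^{(j)}=Q^{(j)}/Q^{(j-1)}$ for its successive quotients. Each $\bar Q^{(j)}$ is again a $D_{H,n}$-subquotient of $M_n$. Since canonical dimension is exact, $d_{D_{H,n}}(\bar Q^{(j)})\leq s$ for every $j$.

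\emph{The first bound for $1\leq s<d$.} Apply Lemma~\ref{lem4-9} to $Q$ and to every $\bar Q^{(j)}$. For each $Z\in\mathcal Z_{s,n}$, additivity gives
\[
 \sum_j\operatorname{rank}_{Z,n}(\bar Q^{(j)})=\operatorname{rank}_{Z,n}(Q)\leq C_{\mathrm{fib}}.
\]
Every quotient of canonical dimension exactly $s$ has positive rank along some $Z$. Summing over $\mathcal Z_{s,n}$, the number of such quotients is therefore at most
\[
 \sum_{Z}\operatorname{rank}_{Z,n}(Q)\leq C_{\mathrm{fib}}|\mathcal Z_{s,n}|\leq C_{\mathrm{fib}}N_s=b_s.
\]

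\emph{The first bound for $s=d$.} Here $\mathcal Z_{d,n}=\{\operatorname{Sp}D_{T,n}\}$, and I would use the generic rank over the domain $D_{T,n}$. This domain is a finite étale extension of a Tate algebra, and Proposition~\ref{prop4-8} with $I=\varnothing$ shows it is irreducible. Localization at the zero prime is exact, so generic rank is additive. Quotients of dimension $d$ have positive generic rank, because their support contains a $d$-dimensional component, and the only such component is all of $\operatorname{Sp}D_{T,n}$. Finally, $M_n$ has generic rank at most $m$ by Proposition~\ref{prop3-3}, and the rank of a subquotient is at most that of $M_n$. Hence at most $m=b_d$ quotients have dimension $d$. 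The only check needed is that no torsion-freeness is required here, since localization simply ignores torsion.

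\emph{The second bound, by induction on $s$.} The case $B_0=0$ is vacuous, since there is no positive dimension below $1$. Suppose the bound holds for $s-1$. Let $j_1<\cdots<j_k$ be the indices with $d_{D_{H,n}}(\bar Q^{(j)})=s$; by the first bound, $k\leq b_s$. These indices cut the chain into at most $k+1\leq 1+b_s$ segments, each lying strictly between consecutive $j$'s.

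Each segment $Q^{(j_a)}\subseteq\cdots\subseteq Q^{(j_{a+1}-1)}$ is a chain in the subquotient $Q^{(j_{a+1}-1)}/Q^{(j_a)}$. All of its successive quotients have dimension at most $s-1$, so by exactness this subquotient has dimension $\leq s-1$. It is a subquotient of $M_n$, so the inductive hypothesis applies and each segment contributes at most $B_{s-1}$ quotients of positive dimension. (When $s=1$, those dimensions are $\leq 0$ and contribute nothing.)

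Adding up, the number of quotients of positive dimension is at most $b_s+(1+b_s)B_{s-1}=B_s$. Uniformity in $n$ holds because $m$, $C_{\mathrm{fib}}$ and the $N_s$ depend only on $H$ and $M$.

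The main obstacle is the case $s=d$, where Lemma~\ref{lem4-9} does not apply and one must check that generic rank over $D_{T,n}$ is well-defined and bounded by $m$. I would handle this directly via Proposition~\ref{prop4-8} (irreducibility for $I=\varnothing$) together with Proposition~\ref{prop3-3} (at most $m$ generators).
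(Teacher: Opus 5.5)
Your proposal is correct and follows the paper's proof essentially step by step. For $s<d$ you use the additive ranks along members of $\mathcal Z_{s,n}$ from Lemma~\ref{lem4-9}; for $s=d$ you bound the generic rank by $m$; and you use the same segmentation induction giving $B_s=b_s+(1+b_s)B_{s-1}$. Your explicit check that $D_{T,n}$ is a domain, via Proposition~\ref{prop4-8} with $I=\varnothing$, only spells out what the paper leaves implicit in the phrase ``additivity of generic rank''.
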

\begin{proof}
Write the chain in decreasing order, adjoining $Q$ and $0$ as endpoints
if necessary, and denote its successive quotients by $Q_j$.
For $s<d$, Lemma~\ref{lem4-9} gives
\[
 \begin{split}
 \#\{j:d_{D_{H,n}}(Q_j)=s\}
 &\leq\sum_j\sum_{Z\in\mathcal Z_{s,n}}\operatorname{rank}_{Z,n}(Q_j)\\
 &=\sum_{Z\in\mathcal Z_{s,n}}\operatorname{rank}_{Z,n}(Q)
 \leq C_{\mathrm{fib}}N_s.
 \end{split}
\]
For $s=d$, additivity of generic rank bounds the number of quotients
with canonical dimension $d$ by the generic rank of $Q$.
This is at most the rank of $M_n$, and hence at most $m$ by
Proposition~\ref{prop3-3}.

We prove the bound $B_s$ by induction on $s$; the case $s=0$ is immediate.
The successive quotients of canonical dimension $s$ divide the chain
into at most $b_s+1$ consecutive subchains. By exactness, the quotient of
the first module by the last in each subchain has canonical dimension
at most $s-1$. The induction hypothesis bounds the number of successive
quotients with positive canonical dimension in each subchain by $B_{s-1}$.
Thus the whole chain has at most $b_s+(b_s+1)B_{s-1}=B_s$ such quotients.
\end{proof}

\begin{corollary}\label{cor:fixed-radius-length}
For every fixed $n\geq n_*$, the module $M_n$ has finite length over $D_{H,n}$.
\end{corollary}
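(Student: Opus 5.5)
Fix $n\geq n_*$. Since $D_{H,n}$ is Noetherian and $M_n$ is finite over it, $M_n$ is Noetherian. Finite length therefore reduces to a uniform bound on the lengths of all finite chains of $D_{H,n}$-submodules of $M_n$. Such a bound then rules out infinite descending chains, because any strictly descending chain of length $N$ gives a finite chain with $N$ nonzero successive quotients. I will bound separately the number of nonzero successive quotients of positive canonical dimension and the number of canonical dimension zero.

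For the positive part, I apply Theorem~\ref{thm4-10} with $s=d$ and $Q=M_n$. This is allowed because $d_{D_{H,n}}(M_n)\leq d_{D_{T,n}}(M_n)\leq d$ by Proposition~\ref{prop3-6}. It bounds the number of successive quotients of positive canonical dimension in any finite chain by $B_d$.

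For the quotients of canonical dimension zero, I argue as follows. Each such quotient $Q_j$ is finite-dimensional over $E$ by Lemma~\ref{lem2-2}. By Corollary~\ref{cor4-4}, it has the form $V^\vee\otimes_E W_j^\vee$, and $\dim_E Q_j\leq d_V C_{\mathrm{fib}}p^{d c_H(W_j)}$. Lemma~\ref{lem2-4} shows that $c_H(W_j)\leq n$, since $Q_j$ is a $D_{H,n}$-module. Hence every such quotient has dimension at most $d_V C_{\mathrm{fib}}p^{dn}$. This alone does not bound their number, so I group consecutive zero-dimensional quotients. Between two quotients of positive dimension, a maximal consecutive run of zero-dimensional quotients gives a subquotient $Q'$ of $M_n$ (the first module of the run modulo the last). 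By exactness of canonical dimension, $Q'$ has canonical dimension zero. Applying Corollary~\ref{cor4-4} to $Q'$ itself gives $\dim_E Q'\leq d_V C_{\mathrm{fib}}p^{dn}$. The number of nonzero quotients in the run is then at most $\dim_E Q'\leq C_{\mathrm{fib}}d_Vp^{dn}$. There are at most $B_d+1$ such runs, so the total chain length is at most $B_d+(B_d+1)C_{\mathrm{fib}}d_Vp^{dn}$. This bound depends on $n$, which is acceptable for a fixed radius. If $\chi$ is nonalgebraic, Corollary~\ref{cor4-4} shows there are no zero-dimensional quotients at all, so the bound is simply $B_d$.

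The main subtlety is justifying that each run-subquotient $Q'$ is itself a $D_{H,n}$-subquotient of $M_n$ of canonical dimension zero, so that Corollary~\ref{cor4-4} and Theorem~\ref{thm4-3} apply. This follows from exactness of $d_{D_{H,n}}$ on short exact sequences, giving $d(Q')=\max_j d(Q_j)=0$. Alternatively, one can avoid Corollary~\ref{cor4-4} entirely: the run bound needs only $\dim_E Q'<\infty$ together with a chain-independent bound on $\dim_E Q'$, and the fiber estimate provides exactly this.
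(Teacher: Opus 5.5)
Your proof is correct and matches the paper's argument essentially step for step. Theorem~\ref{thm4-10} with $s=d$ bounds the quotients of positive canonical dimension by $B_d$, and Lemma~\ref{lem2-4} together with Corollary~\ref{cor4-4}, applied to the subquotient cut out by each of the at most $B_d+1$ runs of dimension-zero quotients, bounds each run by $d_VC_{\mathrm{fib}}p^{dn}$. You should drop your closing remark about avoiding Corollary~\ref{cor4-4}. The fiber bound of Theorem~\ref{thm4-3} only controls each torus eigenspace, and it is the semisimplicity and the count of torus characters in the proof of Corollary~\ref{cor4-4} that turn it into a bound on $\dim_E Q'$.
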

\begin{proof}
Use the canonical dimension function $d_{D_{H,n}}(-)$.
Theorem~\ref{thm4-10} bounds the number of successive quotients with
positive canonical dimension by $B_d$.

By Lemma~\ref{lem2-2}, a subquotient $Q$ of canonical dimension zero is
finite-dimensional over $E$. If the infinitesimal character is nonalgebraic,
$Q=0$. Otherwise write $Q=V^\vee\otimes_EW^\vee$. Lemma~\ref{lem2-4} and Corollary~\ref{cor4-4} give
\[
 c_H(W)\leq n,\qquad
 \dim_E Q=d_V\dim_EW\leq d_VC_{\mathrm{fib}}p^{dc_H(W)}
 \leq d_VC_{\mathrm{fib}}p^{dn}.
\]
The successive quotients with positive canonical dimension divide any
finite strictly decreasing chain into at most $B_d+1$ consecutive
subchains whose successive quotients have canonical dimension zero.
By exactness, the quotient of the first module by the last in each
nonempty subchain also has canonical dimension zero. If $\chi$ is algebraic, each subchain has length at most
$d_VC_{\mathrm{fib}}p^{dn}$, so every finite strictly decreasing chain
has length at most $B_d+(B_d+1)d_VC_{\mathrm{fib}}p^{dn}$.
If $\chi$ is nonalgebraic, the bound $B_d$ suffices.
Thus $M_n$ has finite length over $D_{H,n}$.
\end{proof}

\section{Canonical dimension zero and finite length}\label{sec5}
To complete the proof of Theorem~\ref{thm:finite-length}, we first bound the lengths of finite $G$-stable coadmissible chains in subquotients of canonical dimension zero. For $G=D_F^\times$, the character formulas of
Boyarchenko--Sabitova~\cite{bib1} relate conductors to coadjoint orbits
and yield lower bounds on representation dimensions over an algebraic
closure. Counting Galois conjugates strengthens these bounds over the
fixed coefficient field $E$. For $G=\GL_2(F)$, conjugation instead gives lower
bounds for the dimensions of invariant spaces in smooth representations.

\subsection{Coadjoint orbits and dimension bounds}\label{sec5-1}
Fix an algebraic closure $\overline E$ of $E$.
We first establish lower bounds for the dimensions of smooth irreducible
representations in terms of their conductors. Let $H$ be a
positive-dimensional uniform pro-$p$ group, write $\mathfrak h=\log H$,
and assume
\begin{equation}\label{eq5-1}
 [\mathfrak h,\mathfrak h]\subseteq p\mathfrak h\quad(p>2),\qquad
 [\mathfrak h,\mathfrak h]\subseteq8\mathfrak h\quad(p=2).
\end{equation}
Use $H_n$ and $c_H(W)$ as in Subsections~\ref{subsec:groups-valuations}
and~\ref{subsec:conductors-radii}, respectively.
Put $\epsilon_p=0$ for odd $p$ and $\epsilon_p=1$
for $p=2$, and set $L_p=p^{\epsilon_p}\mathfrak h$. We use
\[
 \mathfrak h^*=\operatorname{Hom}_{\Z_p}(\mathfrak h,\Z_p),\qquad
 L_p^\dagger=\operatorname{Hom}_{\mathrm{cont}}(L_p,\mu_{p^\infty}(\overline E)).
\]
Fix an isomorphism of abelian groups
$\psi:\Q_p/\Z_p\longrightarrow\mu_{p^\infty}(\overline E)$.
For $\lambda\in\operatorname{Hom}_{\Z_p}(L_p,\Q_p)$, define
\[
 \omega_\lambda(x)=\psi(\lambda(x)+\Z_p)\qquad(x\in L_p).
\]
The coadjoint and scalar actions are
\[
 \begin{aligned}
  (g\omega)(x)&=\omega(\operatorname{Ad}(g^{-1})x)&& (g\in H),\\
  (u\omega)(x)&=\omega(ux)=\omega(x)^u&&(u\in\Z_p^\times).
 \end{aligned}
\]
For $\omega\in L_p^\dagger$ define
\[
 \ell_p(\omega)=\min\{n\geq\epsilon_p:\omega(p^n\mathfrak h)=1\},\qquad
 \operatorname{Rad}_p(\omega)=\{x\in\mathfrak h:\omega([x,L_p])=1\}.
\]
Write $\chi_\tau(h)=\operatorname{tr}(\tau(h))$ for the character of a
finite-dimensional representation $\tau$ of $H$, and
$\chi_{\mathrm{cyc}}:\operatorname{Gal}(\overline E/E)\longrightarrow\Z_p^\times$
for the $p$-adic cyclotomic character.

\begin{lemma}\label{lem5-1}
Every smooth absolutely irreducible $\overline E$-representation $\tau$ of $H$
determines a finite coadjoint orbit $\Omega\subset L_p^\dagger$. For $\omega\in\Omega$,
\[
 \frac{\chi_\tau(\exp x)}{\dim\tau}
   =\frac1{|\Omega|}\sum_{\eta\in\Omega}\eta(x)\qquad(x\in L_p),
\]
\[
 \max\{\epsilon_p,c_H(\tau)\}=\ell_p(\omega),\qquad
 |\Omega|=[\mathfrak h:\operatorname{Rad}_p(\omega)],\qquad
 (\dim\tau)^2\geq|\Omega|.
\]
For odd $p$, $(\dim\tau)^2=|\Omega|$, and the orbit determines $\tau$.
Conjugation by $\sigma\in\operatorname{Gal}(\overline E/E)$ sends the orbit of
$\tau$ to $\chi_{\mathrm{cyc}}(\sigma)\Omega$.
\end{lemma}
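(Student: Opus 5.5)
We would deduce Lemma~\ref{lem5-1} from the orbit method of Boyarchenko--Sabitova \cite{bib1} (the Kirillov character formula for uniform pro-$p$ groups under \eqref{eq5-1}), and then check the auxiliary formulas by hand.

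\emph{Orbit and character formula.} First, $\tau$ factors through a finite quotient $H/H_c$, where $c=c_H(\tau)$. Hence $\tau$ is defined over a finite extension of $\Q_p(\mu_{p^\infty})$. By \eqref{eq5-1}, the exponential $\exp:L_p\to H$ is defined. The hypothesis $[\mathfrak h,\mathfrak h]\subseteq p\mathfrak h$ (resp.\ $8\mathfrak h$) is exactly the condition under which \cite{bib1} proves that Kirillov's orbit method holds for the finite $p$-groups $H/H_n$ and for $H$. This gives a bijection between smooth irreducible representations and finite $\operatorname{Ad}^*$-orbits in $L_p^\dagger$ for odd $p$, and an injection-type statement with $(\dim\tau)^2\geq|\Omega|$ for $p=2$. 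It also gives the displayed character formula $\chi_\tau(\exp x)/\dim\tau=|\Omega|^{-1}\sum_{\eta\in\Omega}\eta(x)$. We would quote this, transporting from $\C$ to $\overline E$ by the fixed $\psi$ and a field isomorphism. Since the formula is algebraic in roots of unity, the choice of isomorphism does not matter.

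\emph{Conductor and orbit size.} For $\ell_p$, note that $H_n=\exp(p^n\mathfrak h)$ and that $H_n$ acts trivially exactly when $\chi_\tau(\exp x)=\dim\tau$ for $x\in p^n\mathfrak h$. By the character formula, this holds iff every $\eta\in\Omega$ is trivial on $p^n\mathfrak h$, for $n\geq\epsilon_p$; this uses the fact that an average of roots of unity equals $1$ only if all of them equal $1$. Since $p^n\mathfrak h$ is $\operatorname{Ad}$-stable, triviality for one $\omega$ is equivalent to triviality for all of $\Omega$. This gives $\max\{\epsilon_p,c_H(\tau)\}=\ell_p(\omega)$. For $|\Omega|$, compute the stabilizer of $\omega$. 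The orbit of $\omega$ under $H$ equals its orbit under $\exp(\mathfrak h)$, and $\operatorname{Ad}(\exp y)^*\omega=\omega$ iff $\omega(e^{-\operatorname{ad}y}x-x)=1$ for all $x\in L_p$. Using \eqref{eq5-1}, higher terms of $e^{-\operatorname{ad}y}-1$ lie in deeper lattices, so the stabilizer is $\exp(\operatorname{Rad}_p(\omega))$. This is the standard computation in \cite{bib1}, which we expect to cite. Then $|\Omega|=[\mathfrak h:\operatorname{Rad}_p(\omega)]$.

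\emph{Dimension and Galois action.} For the inequality $(\dim\tau)^2\geq|\Omega|$, we use $\chi_\tau$ and orthogonality on $H/H_n$ for $n$ large. Evaluating $\|\chi_\tau\|^2=1$ through the character formula gives $(\dim\tau)^2/|\Omega|=[H:\exp L_p]$-type factors $\geq1$, with equality for odd $p$, where $L_p=\mathfrak h$. For the Galois action, take $\sigma\in\operatorname{Gal}(\overline E/E)$. Then $\sigma$ acts on $\mu_{p^\infty}$ by $\zeta\mapsto\zeta^{\chi_{\mathrm{cyc}}(\sigma)}$, so applying $\sigma$ to the character formula gives $\chi_{\tau^\sigma}(\exp x)/\dim\tau=|\Omega|^{-1}\sum_{\eta\in\Omega}\eta(x)^{\chi_{\mathrm{cyc}}(\sigma)}$. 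The right side is the formula for the orbit $\chi_{\mathrm{cyc}}(\sigma)\Omega$, and characters distinguish orbits because the functions $\eta$ are linearly independent. Hence $\tau^\sigma$ corresponds to $\chi_{\mathrm{cyc}}(\sigma)\Omega$.

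The main obstacle is the $p=2$ case. There, the character formula from \cite{bib1} only holds on $L_p=2\mathfrak h$, and the bijectivity fails. We would first try to extract from \cite{bib1} the formula restricted to $\exp(L_p)$. We would then obtain the inequality by restricting $\tau$ to the subgroup $\exp(L_p)$ and applying the odd-type formula there.
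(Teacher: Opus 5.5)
Your proposal follows the paper's proof essentially step for step: quote Boyarchenko--Sabitova for the orbit and the character formula, read off $\ell_p(\omega)$ from the fact that an average of roots of unity equals $1$ only when every term does, identify $\operatorname{Stab}_H(\omega)$ with $\exp\operatorname{Rad}_p(\omega)$, get the dimension inequality from orthogonality on $\exp(L_p)$, and deduce the Galois statement from $\sigma(\eta(x))=\eta(x)^{\chi_{\mathrm{cyc}}(\sigma)}$ together with uniqueness of the orbit. Two steps need the precise form the paper gives them: the stabilizer step rests on the factorization $\exp(-\operatorname{ad}y)-1=-\operatorname{ad}y\circ u_y$ with $u_y\in1+p\operatorname{End}_{\Z_p}(L_p)$ invertible, so that $(\exp(-\operatorname{ad}y)-1)L_p=[y,L_p]$; and for $p=2$ the orthogonality computation on $H_1=\exp(L_p)$ gives $(\dim\tau)^2/|\Omega|=\dim_{\overline E}\operatorname{End}_{H_1}(\tau)\geq1$, not an index-type factor coming from $\|\chi_\tau\|^2=1$ on $H$.
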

\begin{proof}
The orbit and character formula are given by
\cite[Thm. 2.6 and (2.7), Thm. 2.8(a)--(b)]{bib1}.
Since the representations factor through finite groups, these formulas apply over
$\overline E$ as well.

Embed a cyclotomic field containing the values in the character formula into $\C$. Realize $\chi_\tau$ by a unitary representation over $\C$.
An average of roots of unity is one only when every term is one. The character
formula and the $H$-stability of $p^n\mathfrak h$ therefore give, for $n\geq\epsilon_p$,
\[
 H_n\subseteq\ker\tau\quad\Longleftrightarrow\quad n\geq\ell_p(\omega),
 \qquad\max\{\epsilon_p,c_H(\tau)\}=\ell_p(\omega).
\]
For $x\in\mathfrak h$, the bracket condition~\eqref{eq5-1} makes
\[
 u_x=\sum_{j\geq0}\frac{(-\operatorname{ad}x)^j}{(j+1)!}
     \in1+p\operatorname{End}_{\Z_p}(L_p)
\]
an automorphism, and $\exp(-\operatorname{ad}x)-1=-\operatorname{ad}x\,u_x$.
Hence
\[
 \operatorname{Stab}_H(\omega)=\exp\operatorname{Rad}_p(\omega).
\]
For sufficiently large $r$, the logarithm induces bijections of sets
\[
 \begin{aligned}
  H/H_r&\simeq\mathfrak h/p^r\mathfrak h,\\
  \operatorname{Stab}_H(\omega)/H_r
  &\simeq\operatorname{Rad}_p(\omega)/p^r\mathfrak h.
 \end{aligned}
\]
Comparing their cardinalities gives
$|\Omega|=[\mathfrak h:\operatorname{Rad}_p(\omega)]$.

For odd $p$, the equality $(\dim\tau)^2=|\Omega|$ is \cite[(2.7)]{bib1}.
For $p=2$, the logarithm identifies normalized Haar measure on $H_1$ with
additive Haar measure on $L_p=2\mathfrak h$, normalized to have total
mass one. Orthogonality of the distinct additive characters in $\Omega$
gives
\[
 \int_{L_p}\left|\frac1{|\Omega|}\sum_{\eta\in\Omega}\eta(x)\right|_{\C}^{2}\,dx
 =\frac1{|\Omega|}.
\]
Here $|\cdot|_{\C}$ is the complex absolute value under the chosen
embedding. The character formula and character orthogonality on $H_1$
therefore give
\[
 \dim_{\overline E}\operatorname{End}_{H_1}(\tau)
   =\frac{(\dim\tau)^2}{|\Omega|}\geq1.
\]
Finally, $\sigma(\eta(x))=\eta(x)^{\chi_{\mathrm{cyc}}(\sigma)}$;
uniqueness of the orbit in the character formula gives the Galois assertion.
\end{proof}

To obtain a lower bound on representation dimension in terms of the
conductor, put
\[
 r_H=\frac12\min_{0\ne\lambda\in\mathfrak h_{\Q_p}^*}
       \operatorname{rank}_{\Q_p}\bigl((x,y)\mapsto\lambda([x,y])\bigr),
 \qquad\mathfrak h_{\Q_p}=\Q_p\otimes_{\Z_p}\mathfrak h.
\]
Here $(x,y)\mapsto\lambda([x,y])$ is a skew-symmetric $\Q_p$-bilinear
form on $\mathfrak h_{\Q_p}$. Its rank is the rank of the associated linear
map $x\mapsto(y\mapsto\lambda([x,y]))$ from $\mathfrak h_{\Q_p}$ to
$\mathfrak h_{\Q_p}^*$.
Thus $2r_H$ is the minimum $\Q_p$-analytic dimension of the coadjoint orbit
$H\cdot\lambda\subseteq\mathfrak h_{\Q_p}^*$ as $\lambda$ ranges over the
nonzero $\Q_p$-linear functionals; this minimum may be zero.

\begin{proposition}\label{prop5-2}
There is $c_1=c_1(H)>0$ such that every smooth absolutely irreducible
$\overline E$-representation $\tau$ of $H$ satisfies
\[
 \dim\tau\geq c_1p^{r_Hc_H(\tau)}.
\]
The same estimate holds for smooth irreducible representations over $E$.
If $F/\Q_p$ is finite and $H$ is a sufficiently small compact open uniform
subgroup of $\SL_1(D_F)$ or $\SL_2(F)$, then $r_H=d$, where $d=[F:\Q_p]$.
\end{proposition}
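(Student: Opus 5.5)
We use Lemma~\ref{lem5-1}, which gives $(\dim\tau)^2\geq|\Omega|=[\mathfrak h:\operatorname{Rad}_p(\omega)]$ and $\ell_p(\omega)=\max\{\epsilon_p,c_H(\tau)\}$. It therefore suffices to bound the index of the radical from below by $c\,p^{2r_H\ell_p(\omega)}$. We may assume $c_H(\tau)$ is large, since small conductors are absorbed into the constant $c_1$.

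Put $\ell=\ell_p(\omega)$. Because $\omega$ is trivial on $p^\ell\mathfrak h$ and nontrivial on $p^{\ell-1}\mathfrak h$, we can write $\omega=\omega_\lambda$ with $\lambda\in p^{-\ell}\mathfrak h^*$ (restricted to $L_p$), and $p^{\ell}\lambda$ is primitive up to the bounded factor $p^{\epsilon_p}$. Set $\mu=p^\ell\lambda\in\mathfrak h^*$, which is nonzero modulo $p^{1+\epsilon_p}$. Then
\[
\operatorname{Rad}_p(\omega)=\{x\in\mathfrak h:\mu([x,L_p])\subseteq p^\ell\Z_p\}=\ker\bigl(\mathfrak h\xrightarrow{B_\mu}\mathfrak h^*\otimes\Z/p^{\ell-\epsilon_p}\bigr),
\]
where $B_\mu(x)=\mu([x,\cdot])$. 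The index of this kernel equals the size of the image of $B_\mu$ modulo $p^{\ell-\epsilon_p}$. By Smith normal form of the matrix of $B_\mu$ over $\Z_p$, the image has order $\prod_j p^{\min(\ell-\epsilon_p,\,v(\delta_j))}$, where the $\delta_j$ are the elementary divisors. The key step is a uniform bound: there is a constant $\kappa=\kappa(H)$ such that every primitive $\mu\in\mathfrak h^*$ (or $\mu\in\mathfrak h^*$ primitive up to the factor $p^{\epsilon_p}$) has at least $2r_H$ elementary divisors of valuation at most $\kappa$. We prove this by compactness. The function sending $\mu$ to the minimal valuation of a $2r_H\times 2r_H$ minor of $B_\mu$ is continuous on the compact set of primitive $\mu$, and it is finite everywhere because each nonzero $\mu\otimes\Q_p$ has rank at least $2r_H$. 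Hence it is bounded by some $\kappa$. Granting this, the index is at least $p^{2r_H(\ell-\epsilon_p-\kappa)}$, so $\dim\tau\geq p^{r_H(\ell-\epsilon_p-\kappa)}\geq c_1p^{r_Hc_H(\tau)}$. The compactness argument is the main technical point, though it is routine.

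For $E$-irreducible $\rho$, we reduce to the absolutely irreducible case. Since $\rho$ is smooth and irreducible, it is finite-dimensional, and $\overline E\otimes_E\rho$ is a direct sum of Galois-conjugate absolutely irreducible constituents. These conjugates share one kernel, so each has conductor $c_H(\rho)$. Thus $\dim_E\rho$ is at least the dimension of one constituent, which is at least $c_1p^{r_Hc_H(\rho)}$.

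To show $r_H=d$ for $\SL_1(D_F)$ and $\SL_2(F)$, we compute over $\overline{\Q}_p$, where ranks do not change. There $\mathfrak h_{\Q_p}\otimes\overline{\Q}_p\simeq\bigoplus_{\sigma}\mathfrak{sl}_2$ with $d$ summands. A functional $\lambda=(\lambda_\sigma)_\sigma$ has form rank $\sum_\sigma\operatorname{rank}(\lambda_\sigma)$, and each term is $0$ if $\lambda_\sigma=0$ and $2$ otherwise, since nonzero coadjoint orbits of $\mathfrak{sl}_2$ have dimension $2$. For $\lambda$ rational over $\Q_p$, the components $\lambda_\sigma$ are permuted transitively by $\operatorname{Gal}(\overline{\Q}_p/\Q_p)$, up to the identification of summands. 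So if $\lambda\neq0$ then every $\lambda_\sigma\neq0$, the rank is $2d$, and $r_H=d$.
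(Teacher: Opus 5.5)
Your proof of the dimension bound and of the passage from $\overline E$ to $E$ follows the paper's argument. You write the orbit character as $\omega_{p^{-\ell}\mu}$ with $\mu$ primitive and identify the radical with the kernel of $B_\mu$ modulo $p^{\ell-\epsilon_p}$. You then bound the $2r_H$ smallest elementary divisors uniformly by compactness; your minimal minor valuation is $-\log_p$ of the paper's $\max|\det\mathcal M|$. Finally, you use that Galois conjugates share a kernel.

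There is one slip to fix. The image of $p^{s}\Z_p$ in $\Z/p^{N}$ has order $p^{\max\{N-s,0\}}$, not $p^{\min\{N,s\}}$. Your displayed product counts the cokernel rather than the image. Only the corrected count $\prod_j p^{\max\{\ell-\epsilon_p-v(\delta_j),0\}}$ yields $[\mathfrak h:\operatorname{Rad}_p(\omega)]\geq p^{2r_H(\ell-\epsilon_p-\kappa)}$. A smaller point: once $\ell>\epsilon_p$, nontriviality on $p^{\ell-1}\mathfrak h$ already gives $\mu\notin p\mathfrak h^*$. So your hedge ``primitive up to $p^{\epsilon_p}$'' is unnecessary, though harmless.

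For $r_H=d$ you take a genuinely different route, and it is correct.

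\emph{The paper's route.} It stays over $\Q_p$ and identifies $\mathfrak h_{\Q_p}$ with its dual via $\operatorname{Tr}_{F/\Q_p}\operatorname{Trd}(xy)$. This requires checking nondegeneracy of the reduced trace on the trace-zero part. A nonzero functional then corresponds to a nonzero trace-zero $z$, and the kernel of $x\mapsto(y\mapsto\lambda([x,y]))$ is the centralizer of $z$. The paper computes this centralizer to be $Fz$, directly for $\mathfrak{sl}_2(F)$ and after splitting for $D_F$, which gives rank $3d-d=2d$.

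\emph{Your route.} You base change to $\overline\Q_p$ and decompose into $d$ commuting copies of $\mathfrak{sl}_2$. You use that every nonzero functional on $\mathfrak{sl}_2$ has form rank $2$. The Galois equivariance $\lambda_{\gamma\sigma}\circ\gamma=\gamma\circ\lambda_\sigma$, together with transitivity on embeddings, then shows that a nonzero $\Q_p$-rational functional has all $d$ components nonzero.

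\emph{Comparison.} Your argument treats both groups uniformly and avoids the trace-pairing nondegeneracy check. The paper's version produces the explicit element $z_\ell$ attached to a functional. That element is reused in Proposition~\ref{prop5-3}, where $\operatorname{Nrd}(z_\ell)$ controls the Galois-conjugate count.
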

\begin{proof}
If $r_H=0$, take $c_1=1$. Otherwise put $n=c_H(\tau)$ and first
assume $n>\epsilon_p$. Let $\Omega$ be the coadjoint orbit attached to $\tau$
by Lemma~\ref{lem5-1}. A character in $\Omega$ has the form $\omega_{p^{-n}\ell}$,
with $\ell\in\mathfrak h^*\setminus p\mathfrak h^*$. Define
\[
 \beta_\ell:\mathfrak h\longrightarrow\mathfrak h^*,\qquad
 \beta_\ell(x)(y)=\ell([x,y]).
\]
By Smith normal form, there are $\Z_p$-bases of $\mathfrak h$ and
$\mathfrak h^*$ in which $\beta_\ell$ has matrix
\[
 \operatorname{diag}\bigl(p^{s_1(\ell)},\ldots,p^{s_t(\ell)},0,\ldots,0\bigr),
 \qquad 0\leq s_1(\ell)\leq\cdots\leq s_t(\ell).
\]
The nonzero diagonal entries are its elementary divisors. Thus
$s_j(\ell)$ are their $p$-adic valuations and
$t=\operatorname{rank}_{\Q_p}\beta_\ell$.
Since $\ell\notin p\mathfrak h^*$, it extends to a nonzero $\Q_p$-linear
functional on $\mathfrak h_{\Q_p}$. By the definition of $r_H$, the form
$(x,y)\mapsto\ell([x,y])$ has rank at least $2r_H$. Hence
$t\geq2r_H$, so $\beta_\ell$ has at least $2r_H$ nonzero elementary divisors.
In fixed $\Z_p$-bases of $\mathfrak h$ and $\mathfrak h^*$, the Smith
normal form gives
\[
 \max_{\mathcal M}|\det\mathcal M|
   =p^{-\sum_{j=1}^{2r_H}s_j(\ell)},
\]
where $\mathcal M$ ranges over the $2r_H\times2r_H$ submatrices of
the matrix of $\beta_\ell$.
On the compact set $\mathfrak h^*\setminus p\mathfrak h^*$, this maximum
is a positive continuous function. It has a positive minimum, so
$\sum_{j=1}^{2r_H}s_j(\ell)\leq B$ for some $B=B(H)\geq0$.

The condition that $\exp x$ stabilize $\omega_{p^{-n}\ell}$ is
$\beta_\ell(x)\in p^{n-\epsilon_p}\mathfrak h^*$. Writing $x_j\in\Z_p$
for the coordinates of $x$ in the basis of $\mathfrak h$ used in the Smith normal form above, this requires
\[
 p^{s_j(\ell)}x_j\in p^{n-\epsilon_p}\Z_p
 \quad\Longleftrightarrow\quad
 x_j\in p^{\max\{n-\epsilon_p-s_j(\ell),0\}}\Z_p
 \qquad(1\leq j\leq t).
\]
The remaining coordinates are unrestricted. Multiplying these indices and applying Lemma~\ref{lem5-1} gives
\[
 \begin{aligned}
 \dim\tau&\geq|\Omega|^{1/2}
  =p^{\frac12\sum_{j=1}^t\max\{n-\epsilon_p-s_j(\ell),0\}}\\
 &\geq p^{r_H(n-\epsilon_p)-B/2}.
 \end{aligned}
\]
Set $c_1=p^{-r_H\epsilon_p-B/2}$. For $n\leq\epsilon_p$,
$c_1p^{r_Hn}\leq1\leq\dim\tau$, so the same bound holds.
After extending scalars to $\overline E$, the irreducible constituents of an irreducible $E$-representation
are Galois conjugate and have the same conductor. Applying the estimate to one
of them proves the bound over $E$.

For the two groups in the statement, $\mathfrak h_{\Q_p}$ is the trace-zero
subspace of $D_F$ or $M_2(F)$, respectively, and has $F$-dimension three.
The $F$-valued trace pairing is nondegenerate on the ambient algebra:
for $D_F$, every nonzero $x$ satisfies
$\operatorname{Trd}(xx^{-1})=2\ne0$; for $M_2(F)$, this is the usual
matrix trace pairing. The scalar line is orthogonal to the trace-zero subspace, and the trace
pairing takes the value $2\ne0$ on $(1,1)$. Hence its restriction to the
trace-zero subspace is nondegenerate. Together with the nondegeneracy of the
field trace pairing, this shows that
\[
 \langle x,y\rangle
   =\operatorname{Tr}_{F/\Q_p}\operatorname{Trd}(xy)
\]
identifies $\mathfrak h_{\Q_p}$ with its $\Q_p$-linear dual; in the $M_2(F)$ case, $\operatorname{Trd}$ denotes the usual matrix trace. Thus every nonzero
$\ell\in\mathfrak h_{\Q_p}^*$ has the form $\ell(x)=\langle z,x\rangle$
for a unique nonzero $z\in\mathfrak h_{\Q_p}$. Using the same formula for
$\beta_\ell$ on $\mathfrak h_{\Q_p}$, cyclicity of trace gives
\[
 \beta_\ell(x)(y)=\ell([x,y])=\langle[z,x],y\rangle.
\]
By nondegeneracy, $\ker\beta_\ell$ is therefore the centralizer of $z$ in
$\mathfrak h_{\Q_p}$.

To compute this centralizer in $\mathfrak{sl}_2(F)$, note that a nonzero
trace-zero $z$ is nonscalar. Choose $v$ such that $v,zv$ form a basis of
$F^2$. A matrix commuting with $z$ is determined by its value on $v$,
so its centralizer in $M_2(F)$ is $F\cdot1+Fz$. Since
$\operatorname{tr}(a\,1+bz)=2a$, the trace-zero centralizer is exactly $Fz$.
This argument includes nonzero nilpotent $z$. For $D_F$, the same
calculation after splitting shows that the trace-zero centralizer also has
$F$-dimension one, since dimensions of kernels are preserved by field
extension. Hence in both cases
\[
 \operatorname{rank}_{\Q_p}\beta_\ell
   =\dim_{\Q_p}\mathfrak h_{\Q_p}-\dim_{\Q_p}\ker\beta_\ell
   =3d-d=2d.
\]
Taking the minimum over nonzero $\ell$ gives $r_H=d$.
\end{proof}

\subsection{Dimension bounds for compact subgroups of \texorpdfstring{$\SL_1(D_F)$}{SL1(D_F)} over \texorpdfstring{$E$}{E}}\label{sec5-2}
\begin{proposition}\label{prop5-3}
Let $H\subset\SL_1(D_F)$ be compact open and uniform,
satisfying~\eqref{eq5-1}. There is $c=c(H)>0$ such that every
smooth irreducible $E$-representation $W$ of $H$ satisfies
\[
 \dim_EW\geq cp^{(d+1)c_H(W)}.
\]
\end{proposition}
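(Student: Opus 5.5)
The plan is to combine Proposition~\ref{prop5-2}, which gives the factor $p^{dc_H(W)}$, with a count of Galois conjugates, which supplies the extra factor $p^{c_H(W)}$. Put $n=c_H(W)$; we may assume $n$ exceeds a constant depending only on $H$ and $E$, and absorb small $n$ into $c$. Over $\overline E$, the module $W\otimes_E\overline E$ is a sum of Galois conjugates of one absolutely irreducible $\tau$, and all of them have conductor $n$. Since $W\otimes_E\overline E$ is stable under $\operatorname{Gal}(\overline E/E)$, every conjugate ${}^\sigma\tau$ occurs in it. Hence
\[
 \dim_EW\geq\#\{{}^\sigma\tau:\sigma\in\operatorname{Gal}(\overline E/E)\}\cdot\dim\tau ,
\]
and Proposition~\ref{prop5-2} gives $\dim\tau\geq c_1p^{dn}$. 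It therefore suffices to show that the Galois orbit of $\tau$ has at least $c'p^{n}$ members.

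By Lemma~\ref{lem5-1}, ${}^\sigma\tau$ has orbit $\chi_{\mathrm{cyc}}(\sigma)\Omega$. Distinct orbits give distinct characters, through the character formula, hence nonisomorphic representations. Since $E/\Qp$ is finite, $\chi_{\mathrm{cyc}}(\operatorname{Gal}(\overline E/E))$ contains $1+p^a\Z_p$ for some $a=a(E)$. So it suffices to bound the stabilizer $S=\{u\in1+p^a\Z_p:u\Omega=\Omega\}$ by
\[
 [1+p^a\Z_p:S]\geq c''p^{n}.
\]

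To bound $S$, use the trace pairing of the proof of Proposition~\ref{prop5-2} to write $\omega=\omega_{p^{-n}\ell}$ with $\ell=\langle z_0,-\rangle$. Here $z_0$ lies in a fixed lattice dual to $\mathfrak h$ and is primitive up to a bounded factor. Put $z=p^{-n}z_0$. The character $\omega$ determines $z$ only modulo a fixed lattice $\mathfrak h^\perp$, and the coadjoint action is conjugation $z\mapsto gzg^{-1}$. The invariant to use is $q(z)=z^2=-\operatorname{Nrd}(z)\in F$.

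The key input is anisotropy of $\operatorname{Nrd}$ on the trace-zero part of $D_F$. Because of it, $|q(z)|$ is comparable to $|z|^2\asymp p^{2n}$ up to constants depending on $H$. Moreover, changing $z$ by an element of the fixed lattice changes $q(z)$ by at most a constant times $p^{n}$.

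Now suppose $u\Omega=\Omega$. Then $uz\equiv gzg^{-1}$ modulo the lattice for some $g\in H$. Taking $q$, we get
\[
 |(u^2-1)\,q(z)|\leq Cp^{n},\qquad\text{hence}\qquad |u^2-1|\leq C'p^{-n}.
\]
Since $p>2$ (or by absorbing a factor $2$ into the constant), this gives $S\subseteq1+p^{n-b'}\Z_p$ for a constant $b'$. Therefore $[1+p^a\Z_p:S]\geq p^{n-b'-a}$, and combining the three displays proves $\dim_EW\geq cp^{(d+1)n}$.

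The main obstacle is making the anisotropy estimates uniform: that the error in $q$ from the lattice ambiguity is $O(p^n)$, and that $|q(z)|\gg p^{2n}$. Both rest on the compactness of the primitive set $\mathfrak h^*\setminus p\mathfrak h^*$ together with $\operatorname{Nrd}(z_0)\neq0$ there, arguing as in the proof of Proposition~\ref{prop5-2}. This step is exactly where $D_F$ is used: for $\SL_2(F)$, nilpotent $z$ break it.
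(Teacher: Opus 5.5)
Your proposal is correct and follows essentially the same route as the paper. Proposition~\ref{prop5-2} supplies the factor $p^{dn}$, and the conjugation-invariant anisotropic form $\operatorname{Nrd}$ on trace-zero elements, through the same quadratic expansion and compactness bound, forces $v_p(u^2-1)\geq n-O(1)$ for every cyclotomic scalar $u$ stabilizing the orbit, which gives $\gg p^{n}$ Galois conjugates. The only cosmetic difference is that you count cosets in a subgroup $1+p^a\Z_p$ of the cyclotomic image, where $u+1$ is a unit for $p>2$ and has valuation one for $p=2$ once $a\geq2$; the paper instead bounds the image of the stabilizer modulo $p^{b_n}$ by the at most four square roots of unity.
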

\begin{proof}
Choose an irreducible constituent $\tau$ of $\overline E\otimes_EW$. Its
conjugates form one Galois orbit and have common conductor $n=c_H(W)$.
It suffices to treat $n>\epsilon_p$; the remaining cases are covered by decreasing the constant $c$. Proposition~\ref{prop5-2} gives
\[
 \dim_{\overline E}\tau\geq c_1p^{dn}.
\]
To bound the number of Galois conjugates, choose a character $\omega_{p^{-n}\ell}$
in the orbit attached to $\tau$ by Lemma~\ref{lem5-1}, with
$\ell\in\mathfrak h^*\setminus p\mathfrak h^*$.
Let $z_\ell\in\mathfrak{sl}_1(D_F)$ correspond to $\ell$ under the trace
pairing used in the proof of Proposition~\ref{prop5-2}.
Choose an integer $a=a(H)\geq0$ such that the forms
\[
 Q(\ell)=p^a\operatorname{Nrd}(z_\ell),\qquad
 B_Q(\ell,\ell')=Q(\ell+\ell')-Q(\ell)-Q(\ell')
\]
take values in $\mathcal O_F$ on $\mathfrak h^*$ and
$\mathfrak h^*\times\mathfrak h^*$, respectively. Since $D_F$ is a division algebra, $Q(\ell)\ne0$ on the compact set
$\mathfrak h^*\setminus p\mathfrak h^*$. Consequently, there is a nonnegative integer
$B_{\mathrm{Nrd}}=B_{\mathrm{Nrd}}(H)$ such that
$v_p(Q(\ell))\leq B_{\mathrm{Nrd}}$ for all
$\ell\in\mathfrak h^*\setminus p\mathfrak h^*$.

If $\sigma\in\operatorname{Gal}(\overline E/E)$ fixes the isomorphism class of
$\tau$, it preserves its associated coadjoint orbit. Putting
$u_\sigma=\chi_{\mathrm{cyc}}(\sigma)$, we obtain
\[
 u_\sigma\ell-\operatorname{Ad}^*(g)\ell\in p^{n-\epsilon_p}\mathfrak h^*
 \quad\text{for some }g\in H.
\]
Here $(\operatorname{Ad}^*(g)\ell)(x)=\ell(\operatorname{Ad}(g^{-1})x)$.
The trace pairing identifies this action with conjugation on $z_\ell$, so
$Q(\operatorname{Ad}^*(g)\ell)=Q(\ell)$.
Put $\delta=u_\sigma\ell-\operatorname{Ad}^*(g)\ell$. The quadratic
expansion gives
\[
 Q(u_\sigma\ell)-Q(\operatorname{Ad}^*(g)\ell)
 =B_Q(\operatorname{Ad}^*(g)\ell,\delta)+Q(\delta).
\]
Since $B_Q$ and $Q$ are integral and
$\delta\in p^{n-\epsilon_p}\mathfrak h^*$, the two terms on the right belong to
$p^{n-\epsilon_p}\mathcal O_F$ and
$p^{2(n-\epsilon_p)}\mathcal O_F$, respectively.
Using $Q(u_\sigma\ell)=u_\sigma^2Q(\ell)$, we obtain
\[
 (u_\sigma^2-1)Q(\ell)\in p^{n-\epsilon_p}\mathcal O_F,\qquad
 v_p(u_\sigma^2-1)\geq n-\epsilon_p-B_{\mathrm{Nrd}}.
\]
Put $\Gamma_E=\operatorname{Gal}(\overline E/E)$ and
$b_n=n-\epsilon_p-B_{\mathrm{Nrd}}$, and first suppose $b_n\geq1$.
Let $\Gamma_{b_n}$ be the image of $\Gamma_E$ under the cyclotomic character
modulo $p^{b_n}$, and let $S_{b_n}$ be the image of
$\operatorname{Stab}_{\Gamma_E}(\tau)$. Since the cyclotomic image of
$\Gamma_E$ is open in $\Z_p^\times$, there is a constant $c_E=c_E(E)>0$
such that $|\Gamma_{b_n}|\geq c_Ep^{b_n}$. Every element of $S_{b_n}$ satisfies
$u^2=1$ modulo $p^{b_n}$, so $|S_{b_n}|\leq4$. The induced map of coset spaces
\[
 \Gamma_E/\operatorname{Stab}_{\Gamma_E}(\tau)
 \longrightarrow\Gamma_{b_n}/S_{b_n}
\]
is surjective. Hence
\[
 \begin{aligned}
 {}[\Gamma_E:\operatorname{Stab}_{\Gamma_E}(\tau)]
 &\geq[\Gamma_{b_n}:S_{b_n}]\geq\frac{|\Gamma_{b_n}|}{4}\geq\frac{c_Ep^{b_n}}{4}\\
 &=\frac{c_Ep^{-\epsilon_p-B_{\mathrm{Nrd}}}}4\,p^n.
 \end{aligned}
\]
Take $c_2=c_Ep^{-\epsilon_p-B_{\mathrm{Nrd}}}/4$ and decrease it to
cover the remaining finitely many values of $n$. The distinct Galois
conjugates of $\tau$ all occur in $\overline E\otimes_EW$ and have the same
dimension. Therefore,
\[
 \dim_EW\geq[\Gamma_E:\operatorname{Stab}_{\Gamma_E}(\tau)]
       \dim_{\overline E}\tau\geq c_1c_2p^{(d+1)n}.
\]
\end{proof}
The extra factor $p^{c_H(W)}$ comes from the lower bound on the number
of Galois conjugates.

\subsection{Growth of invariant spaces for \texorpdfstring{$\SL_2(F)$}{SL2(F)}}\label{sec5-3}
\begin{proposition}\label{prop5-4}
Fix a sufficiently small compact open uniform $H\subset\SL_2(F)$.
There is $\kappa_H=\kappa_H(H)>0$ such that every infinite-dimensional smooth
irreducible admissible $E$-representation $\tau$ of $\SL_2(F)$ admits an integer
$n_0=n_0(H,\tau)$ for which
\[
 \dim_E\tau^{H_n}\geq\kappa_Hp^{dn}\qquad(n\geq n_0).
\]
\end{proposition}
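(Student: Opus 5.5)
The plan is to use conjugation by the full group $\SL_2(F)$ to move many distinct vectors into $H_n$-invariants. Since $\tau$ is smooth and nonzero, fix $0\ne v\in\tau$ and an integer $n_1$ with $v\in\tau^{H_{n_1}}$.

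\emph{Diagonal conjugation.} Put $a=\operatorname{diag}(\pi_F,\pi_F^{-1})$. Conjugation by $a^k$ contracts one unipotent direction and expands the other. For $H$ sufficiently small (an Iwahori-type uniform lattice as in \eqref{eq3-2}), the group $H_n$ has an Iwahori factorization $H_n=N^-_nA_nN^+_n$. Then, for $k$ in a suitable range, $a^{-k}H_na^k$ lies in a group of the form $N^-_{n+2k}A_nN^+_{n-2k}$, measured in $\pi_F$-adic levels. To make this uniform I would rather work with the whole Iwahori-decomposed family.

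\emph{Counting via Jacquet modules and the Kirillov model.} This is the intended core of the argument. Since $\tau$ is infinite-dimensional and irreducible, it is a constituent of the restriction to $\SL_2(F)$ of an infinite-dimensional irreducible representation $\pi$ of $\GL_2(F)$. Restriction has finite multiplicity, so it suffices to treat $\pi|_{\SL_2(F)}$ and pass to a constituent. The constituents are conjugate under $\GL_2(F)$, and conjugation changes $\dim\tau^{H_n}$ only by a bounded shift in $n$, because $H_n$ and its conjugate by an element of $\GL_2(\mathcal O_F)\operatorname{diag}(\pi_F,1)$ are commensurable within $H_{n\pm c}$. In the Kirillov model, $\pi$ contains $C_c^\infty(F^\times)$ with $\operatorname{diag}(x,1)$ acting by translation and $\left(\begin{smallmatrix}1&y\\0&1\end{smallmatrix}\right)$ acting by multiplication by $\psi(xy)$.

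Take $\phi=\mathbf 1_{u(1+\pi_F^{m}\mathcal O_F)}$ with $u$ in an annulus $\pi_F^{-j}\mathcal O_F^\times$. Such $\phi$ is invariant under the upper unipotent part of $H_n$ and under the diagonal part as soon as $j$ and $m$ are at most roughly $en$. These functions are linearly independent, and there are about $[\mathcal O_F^\times:1+\pi_F^{en}\mathcal O_F]\cdot en\approx p^{dn}$ of them. Invariance under the lower unipotent part $N^-_n$ is not automatic. To get it, I would average over $H_n$: the projector $e_{H_n}=\int_{H_n}$ applied to these $N^+_nA_n$-invariant vectors lands in $\tau^{H_n}$. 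Alternatively, I would choose $\phi$ supported where $|x|$ is large, since the Weyl element acts through an $\varepsilon$-factor and stays controlled in that region.

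\emph{Main obstacle.} The hardest step is showing that the averaging map $e_{H_n}$ does not collapse too much: the image of the span of the $\phi$'s must still have dimension $\geq\kappa_Hp^{dn}$. I would try to prove this by showing that $e_{N^-_n}$ acts on $\phi$ supported in $|x|\ge q^{n_0}$ as a small perturbation of the identity. The lower unipotents act in the Kirillov model as $w\,n(y)\,w^{-1}$. For large $|x|$ and $|y|$ small relative to the conductor of $\pi$, the theory of the newform shows that these functions are fixed. Concretely, $\pi$ has a vector fixed by $K_1(\mathfrak p^{c(\pi)})$, which gives an explicit region where $N^-_n$ acts trivially once $n\ge n_0$. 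The constant $\kappa_H$ should come out uniform, since the count is a fixed fraction of $p^{dn}$; the dependence on $\tau$ goes entirely into the threshold $n_0$.

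If this analysis gets too technical, the fallback is the abstract route. Use Harish-Chandra's local character expansion: $\dim\tau^{H_n}$ equals $\Theta_\tau(e_{H_n})$, which is asymptotically $\sum_{\mathcal O}c_{\mathcal O}\hat\mu_{\mathcal O}(\mathbf 1_{\log H_n})$. Every infinite-dimensional $\tau$ has a nonzero coefficient on a regular nilpotent orbit, and that orbit's contribution scales like $p^{dn}$ because the orbit has $\Qp$-dimension $2d$. Homogeneity then gives the bound for $n\ge n_0$ with $\kappa_H$ depending only on the minimal positive leading coefficient. The subtle point there is that the coefficients $c_{\mathcal O}$ are bounded below uniformly in $\tau$; for $\SL_2$ they are rational numbers of the form $1/2$ or $1/4$ times fixed measures, so $\kappa_H$ does not depend on $\tau$.
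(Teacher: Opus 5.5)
Your Kirillov-model argument has a genuine gap. The step you flag as the main obstacle, that averaging over $H_n$ does not collapse the span of your functions, is left open, and the mechanism you propose for it cannot work.

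\begin{itemize}
\item \emph{The count is off by a factor $en$.} Your own count $[\mathcal O_F^\times:1+\pi_F^{en}\mathcal O_F]\cdot en$ is of order $en\cdot p^{dn}$, not $p^{dn}$. For fixed $\tau$, however, $\dim\pi^{H_n}$ is $O(p^{dn})$; the character expansion you invoke later shows this.
\item \emph{So averaging must collapse heavily.} On these $N^+_nA_n$-invariant functions, $e_{H_n}$ acts as $e_{N^-_n}$. It must therefore kill all but a fraction $O(1/n)$ of their span. It cannot be a small perturbation of the identity on all functions supported in $|x|\ge q^{n_0}$ with $n_0$ fixed.
\item \emph{Newforms do not help.} A $K_1(\mathfrak p^{c(\pi)})$-fixed vector and its translates are specific vectors. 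They say nothing about how $N^-_n$ acts on the functions $\mathbf 1_{u(1+\pi_F^m\mathcal O_F)}$.
\item \emph{What a working version would need.} Restrict to the outermost annulus $v_F(x)\approx-en$ permitted by $N^+_n$-invariance. Decompose into $\mathcal O_F^\times$-types $\mu$ with $c(\mu)\le en$. Then use the local functional equation for the twists $\pi\otimes\mu^{-1}$, which gives the Weyl element's action, together with a bound on $c(\pi\otimes\mu^{-1})$ uniform in such $\mu$. This shows that $w$ keeps these functions inside the $N^+_n$-invariant region. None of this is in your sketch.
\item \emph{Coefficients.} The Kirillov model needs an additive character with values in $\mu_{p^\infty}$. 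You must therefore leave $E$ and handle $\tau$ that are not absolutely irreducible.
\end{itemize}

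Your fallback via the local character expansion is viable in outline, but as stated it omits an essential input. $\SL_2(F)$ has several regular nilpotent orbits, all contributing at order $p^{dn}$, so one positive coefficient is not enough. You need every regular coefficient to be nonnegative, and the nonzero ones bounded below independently of $\tau$.
\begin{itemize}
\item This is the M\oe glin--Waldspurger (Rodier) identification of $c_{\mathcal O}$ with dimensions of spaces of degenerate Whittaker functionals. Up to a fixed normalization this gives $c_{\mathcal O}\in\{0,1\}$, not ``$1/2$ or $1/4$''.
\item You also need genericity of infinite-dimensional $\tau$, and you must again pass from $E$ to $\mathbf C$.
\end{itemize}

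The paper needs none of this. Your opening idea of conjugating by $a=\operatorname{diag}(\pi_F,\pi_F^{-1})$ is the right tool, but it should force strict growth of invariants rather than transport vectors.
\begin{itemize}
\item Take $K_r=\exp(p^r\mathfrak{sl}_2(\mathcal O_F))$, $H=K_s$, and $\mathcal V_n=\tau^{K_{s+n}}$. Each $\mathcal V_n$ is $\SL_2(\mathcal O_F)$-stable, and $\tau(a^{\pm1})\mathcal V_n\subseteq\mathcal V_{n+2}$.
\item $\SL_2(\mathcal O_F)$ and $a$ generate $\SL_2(F)$. So an equality $\mathcal V_n=\mathcal V_{n+2}\neq0$ would give a nonzero finite-dimensional subrepresentation. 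Hence $\mathcal V_{n-2}\subsetneq\mathcal V_n$ for $n\gg0$.
\item Therefore $\overline E\otimes_E\mathcal V_n$ contains a $K_s/K_{s+n}$-constituent of conductor $n-1$ or $n$. The orbit-method bound of Proposition~\ref{prop5-2}, with $r_H=d$, gives $\dim_E\mathcal V_n\geq c_1p^{d(n-1)}$, uniformly in $\tau$.
\item Commensurability handles general $H$.
\end{itemize}
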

\begin{proof}
Put $K_r=\exp(p^r\mathfrak{sl}_2(\mathcal O_F))$ for sufficiently large integers
$r$, and first take $H=K_s$ with $s$ sufficiently large. The finite-dimensional
invariant spaces form an increasing sequence
\[
 \mathcal V_0\subseteq\mathcal V_1\subseteq\cdots,\qquad
 \mathcal V_n=\tau^{K_{s+n}}.
\]
Let $K=\SL_2(\mathcal O_F)$ and $a=\operatorname{diag}(\pi_F,\pi_F^{-1})$,
with $\pi_F$ a uniformizer of $F$. Matrix conjugation gives
\[
 a^{\pm1}K_{s+n+2}a^{\mp1}\subseteq K_{s+n},\qquad
 \tau(a^{\pm1})\mathcal V_n\subseteq\mathcal V_{n+2}.
\]
Here the subscript measures powers of $p$. Conjugation by $a$ multiplies
the upper and lower off-diagonal entries by $\pi_F^2$ and $\pi_F^{-2}$,
respectively. Since $v_F(p^2/\pi_F^2)=2e-2\geq0$, the displayed subgroup inclusions hold for both $a$ and $a^{-1}$.
The group generated by $K$ and $a$ is $\SL_2(F)$ by the
elementary divisor decomposition. Since $K_{s+n}$ is normal in $K$, the space
$\mathcal V_n$ is $K$-stable. If $\mathcal V_n=\mathcal V_{n+2}\ne0$, then
\[
 \tau(a^{\pm1})\mathcal V_n=\mathcal V_n,\qquad
 \SL_2(F)\cdot\mathcal V_n=\mathcal V_n.
\]
Thus $\mathcal V_n$ is a nonzero finite-dimensional $\SL_2(F)$-subrepresentation
of $\tau$, contradicting irreducibility and infinite-dimensionality. Since $\tau$
is smooth and nonzero, $\mathcal V_n\ne0$ for large $n$. Consequently
\[
 (\mathcal V_n)^{K_{s+n-2}}=\mathcal V_{n-2}\subsetneq\mathcal V_n
 \qquad(n\gg0).
\]
After extending scalars to $\overline E$, the semisimple
$K_s/K_{s+n}$-representation $\overline E\otimes_E\mathcal V_n$ has
an irreducible constituent whose conductor relative to $K_s$ is
$n-1$ or $n$. Otherwise $K_{s+n-2}$ would act trivially on
$\mathcal V_n$.
Let $c_1=c_1(K_s)>0$ be the constant in Proposition~\ref{prop5-2}.
It gives $\dim_E\mathcal V_n\geq c_1p^{d(n-1)}$.
For a general $H$, keep $K_s$ fixed. Since $\log H$ and $p^s\mathfrak{sl}_2(\mathcal O_F)$ are commensurable, there is $a_0=a_0(H)$ such that $H_n\subseteq K_{s+n-a_0}$ for all sufficiently large $n$. Thus
$\tau^{K_{s+n-a_0}}\subseteq\tau^{H_n}$, and
\[
 \dim_E\tau^{H_n}\geq c_1p^{d(n-a_0-1)}\qquad(n\gg0).
\]
Take $\kappa_H=c_1p^{-d(a_0+1)}$.
\end{proof}

\subsection{Subquotients of canonical dimension zero}\label{sec5-4}
We use the notation of Subsection~\ref{subsec:conductors-radii},
with $H$ satisfying~\eqref{eq5-1}.
Lemma~\ref{lem2-5} shows that if $\chi$ is nonalgebraic, every coadmissible
subquotient of $M_{\an}$ of canonical dimension zero is zero.
In the algebraic case, choose $n_*=n_*(H,M)\geq n_{\mathrm{res}}$ and
$C_{\mathrm{fd}}=C_{\mathrm{fd}}(H,M)>0$ such that, for every $n\geq n_*$,
every subquotient $Q=V^\vee\otimes_EW^\vee$ of $M_n$ of canonical
dimension zero satisfies
\begin{equation}\label{eq5-2}
 \dim_EW\leq C_{\mathrm{fd}}p^{dc_H(W)}.
\end{equation}
Propositions~\ref{prop5-5} and~\ref{prop5-6} below use~\eqref{eq5-2}
as a hypothesis in the algebraic case.
For $p>2$, enlarge $n_*$ from Proposition~\ref{prop3-3} if necessary;
Corollary~\ref{cor4-4} gives~\eqref{eq5-2} with
$C_{\mathrm{fd}}=C_{\mathrm{fib}}=2^{d}m$.

\paragraph*{The case $G=D_F^\times$.}
\begin{proposition}\label{prop5-5}
Let $G=D_F^\times$. If $\chi$ is algebraic, there are integers
$B=B(H,M)$ and $n_\dagger=n_\dagger(H,M)$ such that every coadmissible
subquotient $Q$ of $M_{\an}$ of canonical dimension zero satisfies
\[
 \dim_EQ\leq B,\qquad Q\xrightarrow{\sim}Q_n
 \quad\text{for every }n\geq n_\dagger,
\]
where the map is canonical. If $\chi$ is nonalgebraic, every such $Q$ is zero.
\end{proposition}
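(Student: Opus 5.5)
The nonalgebraic case is immediate from Lemma~\ref{lem2-5}, so assume $\chi$ is algebraic with algebraic type $V$. Let $Q\ne0$ be a coadmissible subquotient of $M_{\an}$ of canonical dimension zero. Applying Lemma~\ref{lem2-5} with $\Gamma=H$ gives
\[
 Q'_b\simeq V\otimes_E\rho,\qquad Q_n\simeq V^\vee\otimes_E(\rho^{H_n})^\vee\qquad(n\geq1),
\]
with $\rho$ smooth admissible. The plan is to show that every smooth irreducible constituent of $\rho$ has conductor bounded independently of $Q$. We then deduce that $\rho$ is finite-dimensional with $\dim\rho$ bounded, and that $\rho^{H_n}=\rho$ for large $n$.

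First, $Q_n$ is a $D_{H,n}$-subquotient of $M_n$, because $Q\mapsto Q_n$ is exact. It has canonical dimension zero, or is zero, so \eqref{eq5-2} applies to it for $n\geq n_*$. The same holds for every $D_{H,n}$-subquotient of $Q_n$. Such a subquotient has the form $V^\vee\otimes_E W^\vee$, where $W$ is any $H$-subquotient of the finite-dimensional smooth representation $\rho^{H_n}$; semisimplicity makes these the same as direct summands. Take $W$ to be a single irreducible $H$-summand of $\rho^{H_n}$, with conductor $\nu=c_H(W)\leq n$. Then \eqref{eq5-2} gives $\dim_E W\leq C_{\mathrm{fd}}p^{d\nu}$. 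Proposition~\ref{prop5-3} gives $\dim_EW\geq c\,p^{(d+1)\nu}$, which requires $H$ to satisfy \eqref{eq5-1}; we shrink $H$ at the outset if necessary. Comparing the two bounds yields $p^{\nu}\leq C_{\mathrm{fd}}/c$. Hence $\nu\leq\nu_0=\nu_0(H,M)$, and $\dim W$ is bounded by $C_{\mathrm{fd}}p^{d\nu_0}$.

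Since every irreducible constituent of each $\rho^{H_n}$ has conductor at most $\nu_0$, every vector of $\rho=\bigcup_n\rho^{H_n}$ is fixed by $H_{\nu_0}$. Therefore $\rho=\rho^{H_{\nu_0}}$, which is finite-dimensional by admissibility. To bound $\dim\rho$ uniformly, apply \eqref{eq5-2} to $Q_n$ itself for $n\geq\max(n_*,\nu_0)$. Here $W=\rho$ has $c_H(\rho)\leq\nu_0$, so
\[
 \dim_E\rho\leq C_{\mathrm{fd}}p^{d\nu_0},\qquad \dim_EQ\leq d_VC_{\mathrm{fd}}p^{d\nu_0}=:B.
\]
We take $n_\dagger=\max(n_*,\nu_0,1)$. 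For $n\geq n_\dagger$, Lemma~\ref{lem2-5} gives $Q_n\simeq V^\vee\otimes(\rho^{H_n})^\vee=V^\vee\otimes\rho^\vee$. Dually $Q\simeq V^\vee\otimes\rho^\vee$, which is finite-dimensional, so the canonical map $Q\to Q_n$ is a surjection with dense image between spaces of equal finite dimension. It is therefore an isomorphism.

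The main obstacle is the comparison step. The two inequalities must be applied to the same irreducible piece, and the upper bound \eqref{eq5-2} is stated for subquotients of $M_n$, not of $Q$; this is justified by exactness of $Q\mapsto Q_n$ together with the semisimplicity of $H$ acting on $\rho^{H_n}$. A second point is that \eqref{eq5-2} is available only for $n\geq n_*$, whereas the conductor of a constituent may be as large as $n$. This is harmless, because a constituent of conductor $\nu$ already appears in $\rho^{H_n}$ for every $n\geq\max(\nu,n_*)$.
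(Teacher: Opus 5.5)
Your proof is correct and follows the paper's argument: you apply Lemma~\ref{lem2-5}, compare Proposition~\ref{prop5-3} with \eqref{eq5-2} to bound conductors uniformly, and conclude that $\rho=\rho^{H_{\nu_0}}$ is finite-dimensional, so the radius modules stabilize. The only difference is cosmetic: you apply both inequalities to a single irreducible summand, which is legitimate because $V^\vee\otimes_EW^\vee$ is a $D_{H,n}$-direct summand of $Q_n$; the paper instead applies \eqref{eq5-2} to all of $W_n=\rho^{H_n}$ and Proposition~\ref{prop5-3} to a summand of maximal conductor $c_H(W_n)$.
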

\begin{proof}
The case $Q=0$ is immediate. For $Q\ne0$, Lemma~\ref{lem2-5} gives
\[
 Q'_b\simeq V\otimes_E\rho,\qquad
 Q_n\simeq V^\vee\otimes_EW_n^\vee,\qquad W_n=\rho^{H_n},
\]
with $\rho$ smooth admissible. For $n\geq n_*$ with $W_n\ne0$, the action
through the finite group $H/H_n$ makes $W_n$ semisimple.
Let $c=c(H)>0$ be the constant in Proposition~\ref{prop5-3}.
Choose an irreducible $E$-summand of maximal conductor
$\nu_n=c_H(W_n)$ and apply Proposition~\ref{prop5-3} over the fixed
field $E$. Together with~\eqref{eq5-2}, this gives
\[
 cp^{(d+1)\nu_n}\leq\dim_EW_n\leq C_{\mathrm{fd}}p^{d\nu_n}.
\]
Thus $p^{\nu_n}\leq C_{\mathrm{fd}}/c$, so
$\nu_n\leq c_{\mathrm{bd}}=c_{\mathrm{bd}}(H,M)$ for an
integer $c_{\mathrm{bd}}\geq1$, uniformly in $n$ and $Q$.

Since $\rho=\bigcup_{n\geq n_*}W_n$, the group $H_{c_{\mathrm{bd}}}$ acts
trivially on $\rho$. Admissibility therefore makes
$\rho=\rho^{H_{c_{\mathrm{bd}}}}$ finite-dimensional. For
$n\geq n_\dagger:=\max\{n_*,c_{\mathrm{bd}}\}$, one has $W_n=\rho$.
The transition maps are dual to the inclusions
$V\otimes_EW_n\subseteq V\otimes_EW_{n+1}$, so the inverse
system is stationary from $n_\dagger$ onward and the canonical map
$Q=\varprojlim_nQ_n\to Q_n$ is an isomorphism for $n\geq n_\dagger$.
Applying~\eqref{eq5-2} at such a radius gives
\[
 \dim_E\rho\leq C_{\mathrm{fd}}p^{dc_{\mathrm{bd}}}.
\]
Since $\dim_EQ=d_V\dim_E\rho$, we may take
\[
 B=\left\lceil d_VC_{\mathrm{fd}}p^{dc_{\mathrm{bd}}}\right\rceil.
\]
\end{proof}

\paragraph*{The case $G=\GL_2(F)$.}
The estimate in Proposition~\ref{prop5-3} uses the anisotropy of the reduced
norm on $D_F$. For $\GL_2(F)$, we instead use the lower bound for
$\dim_E\tau^{H_n}$ in Proposition~\ref{prop5-4}.

\begin{proposition}\label{prop5-6}
Let $G=\GL_2(F)$. There is an integer $b_0=b_0(H,M)\geq0$ such that
every $G$-stable coadmissible subquotient $Q$ of $M_{\an}$ of canonical
dimension zero has finite length at most $b_0$, computed using
$G$-stable coadmissible submodules. If $\chi$ is nonalgebraic, take $b_0=0$.
\end{proposition}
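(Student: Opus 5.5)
The plan is to translate $G$-stable chains in $Q$ into chains of smooth $G$-representations via Lemma~\ref{lem2-5}. I would then count the infinite-dimensional factors by comparing the growth $p^{dn}$ of their $H_n$-invariants from Proposition~\ref{prop5-4} with the upper bound \eqref{eq5-2}, and count the finite-dimensional factors at a single fixed radius.

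If $\chi$ is nonalgebraic, Lemma~\ref{lem2-5} gives $Q=0$, so $b_0=0$ works. Assume $\chi$ is algebraic and $Q\ne0$. Since $Q$ is $G$-stable, it is $G_1$-stable, and Lemma~\ref{lem2-5} with $\Gamma=G_1$ gives $Q'_b\simeq V\otimes_E\rho$ and $Q_n\simeq V^\vee\otimes_E(\rho^{H_n})^\vee$, with $\rho=\operatorname{Hom}_{\mathfrak g_E}(V,Q'_b)$ smooth admissible. The $G$-action on $Q'_b$ commutes with $\mathfrak g_E$ up to $\operatorname{Ad}$. Twisting by the central character of Lemma~\ref{lem2-3} and using that $V$ extends algebraically to $G$ up to a character, $\rho$ inherits a smooth $G$-action compatible with the isomorphism.

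Next I would set up the correspondence between chains. A strictly decreasing chain of $G$-stable coadmissible submodules of $Q$ dualizes to an increasing chain of closed $G$-stable subspaces of $Q'_b$. Each such subspace is $\mathfrak g_E$-isotypic of type $V$, hence of the form $V\otimes\rho_j$ with $\rho_j\subseteq\rho$ a $G$-stable subspace. Strict inclusions are preserved, so it suffices to bound the length $\ell$ of a chain $0=\rho_0\subsetneq\cdots\subsetneq\rho_\ell\subseteq\rho$ of $G$-stable subspaces. Put $\bar\rho_j=\rho_j/\rho_{j-1}$. Taking $H_n$-invariants is exact in characteristic zero, so
\[
 \sum_j\dim_E\bar\rho_j^{H_n}=\dim_E\rho_\ell^{H_n}\leq\dim_E\rho^{H_n}\leq C_{\mathrm{fd}}p^{dn}
 \qquad(n\geq n_*),
\]
where the last inequality comes from \eqref{eq5-2} applied to $Q_n$ together with $c_H(\rho^{H_n})\leq n$.

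I would then split the factors into two types. Any finite-dimensional smooth representation of $\SL_2(F)$ is trivial, since $\SL_2(F)$ is generated by unipotents and each unipotent subgroup has no nontrivial finite-dimensional smooth representations. So a finite-dimensional $\bar\rho_j$ is fixed by $G_1\supseteq H$, and $\bar\rho_j^{H_{n_*}}=\bar\rho_j\ne0$. Taking $n=n_*$ in the display, the number of such $j$ is at most $C_{\mathrm{fd}}p^{dn_*}$.

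For an infinite-dimensional $\bar\rho_j$, which is admissible, I need an infinite-dimensional irreducible admissible $\SL_2(F)$-subquotient $\tau_j$. I expect this to be the main obstacle. My approach is to pick $v\in\bar\rho_j$ not fixed by $G_1$; such a $v$ exists because otherwise $\bar\rho_j$ would be a smooth admissible representation of $G_1$ with trivial action, hence $\bar\rho_j=\bar\rho_j^{H}$ would be finite-dimensional. I would then pass to the $G_1$-subrepresentation generated by $v$. It is finitely generated and admissible, so by Zorn's lemma it has an irreducible quotient. If every such irreducible subquotient were finite-dimensional, hence trivial, I would argue via the finite-length theory of finitely generated admissible smooth $\SL_2(F)$-representations in characteristic zero (Bernstein's theory after extending scalars) that $G_1$ acts unipotently, and then smoothly trivially, which is a contradiction. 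Once $\tau_j$ is in hand, $\dim\bar\rho_j^{H_n}\geq\dim\tau_j^{H_n}$ by exactness, and Proposition~\ref{prop5-4} gives $\dim\bar\rho_j^{H_n}\geq\kappa_Hp^{dn}$ for $n\geq n_0(H,\tau_j)$.

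To conclude, I choose $n\geq n_*$ larger than the finitely many $n_0(H,\tau_j)$ for this chain. The display then shows that the number $k$ of infinite-dimensional factors satisfies $k\kappa_Hp^{dn}\leq C_{\mathrm{fd}}p^{dn}$, so $k\leq C_{\mathrm{fd}}/\kappa_H$. The bound is independent of the chain because $p^{dn}$ cancels. Hence
\[
 \ell\leq b_0:=\lfloor C_{\mathrm{fd}}/\kappa_H\rfloor+\lfloor C_{\mathrm{fd}}p^{dn_*}\rfloor,
\]
which depends only on $H$ and $M$. Since every chain has length at most $b_0$, $Q$ has finite length at most $b_0$.
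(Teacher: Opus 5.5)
Your proof is correct, and its outer structure matches the paper's: Lemma~\ref{lem2-5} with $\Gamma=G_1$, exactness of $H_n$-invariants played against \eqref{eq5-2}, one count at the fixed radius $n_*$, and one count at a chain-dependent large radius via Proposition~\ref{prop5-4}. The difference is in how you split the factors.

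You split according to whether the factor $\bar\rho_j$ itself is finite-dimensional. This obliges you to show that an infinite-dimensional admissible $\bar\rho_j$ has an infinite-dimensional irreducible subquotient. For that you need the finite-length theorem for finitely generated admissible representations (Bernstein, after base change to $\overline E$). That step is valid, but it imports a deep external result.

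The paper avoids this input. It chooses an arbitrary irreducible subquotient $\tau_i$ of each factor (a cyclic subrepresentation modulo a maximal proper one) and splits according to $\dim_E\tau_i$ instead. If $\tau_i$ is finite-dimensional, it is trivial on $G_1$, so exactness gives $\rho_i^{H_{n_*}}\ne0$. These indices are then counted at radius $n_*$, exactly as your finite-dimensional factors are. If $\tau_i$ is infinite-dimensional, Proposition~\ref{prop5-4} applies directly.

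The case you exclude using Bernstein is an infinite-dimensional factor all of whose irreducible subquotients are trivial. In the paper it is simply absorbed into the first count, so the argument stays elementary. Your route gives slightly finer information: the factors that are themselves infinite-dimensional number at most $C_{\mathrm{fd}}/\kappa_H$. The price is the external theorem.

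Separately, the step giving $\rho$ a smooth $G$-action is unnecessary, since everything afterwards uses only $G_1$-stability, as in the paper. Its justification is also imprecise: Lemma~\ref{lem2-3} supplies only a locally analytic character of a compact open $Z_0\subseteq Z(G)$, not a central character of $G$ to twist by. You can drop that step and work with $G_1$ throughout.
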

\begin{proof}
The case $Q=0$ is immediate. Suppose $Q\ne0$. By Lemma~\ref{lem2-2},
every radius module $Q_n$ is finite-dimensional.
Take any finite strictly decreasing chain of $G$-stable coadmissible
submodules of $Q$. Adjoin $Q$ and $0$ as endpoints if necessary, and write
\[
 Q=Q^0\supsetneq Q^1\supsetneq\cdots\supsetneq Q^\ell=0,\qquad
 S_i=Q^{i-1}/Q^i\quad(1\leq i\leq\ell).
\]
Regard this as a $G_1$-stable chain. Lemma~\ref{lem2-5}, with $\Gamma=G_1$,
gives $Q'_b=V\otimes_E\rho$,
where $\rho$ is a smooth admissible representation of $G_1$.
The functor $\operatorname{Hom}_{\mathfrak g_E}(V,-)$ is exact on these
$\mathfrak g_E$-modules, which are direct sums of copies of $V$. Applied to
the dual chain, it gives a filtration of $\rho$ in reverse order, with
nonzero successive quotients
\[
 \rho_i=\operatorname{Hom}_{\mathfrak g_E}(V,(S_i)'_b),\qquad
 (S_i)'_b\simeq V\otimes_E\rho_i.
\]
For every $n\geq n_*$, exactness of taking $H_n$-invariants and Lemma~\ref{lem2-5} give
\begin{equation}\label{eq5-3}
 \sum_{i=1}^\ell\dim_E\rho_i^{H_n}
   =\dim_E\rho^{H_n}\leq C_{\mathrm{fd}}p^{dn},
\end{equation}
where the inequality uses~\eqref{eq5-2} and $c_H(\rho^{H_n})\leq n$.
For each $i$, choose an irreducible subquotient $\tau_i$ of $\rho_i$.
Such a subquotient exists: take a nonzero cyclic subrepresentation of
$\rho_i$ and quotient by a maximal proper subrepresentation.
The representation $\tau_i$ is smooth and admissible, since taking
invariants under a compact open subgroup is exact for smooth
representations over $E$. Set
\[
 I_{\mathrm{fin}}=\{i:\dim_E\tau_i<\infty\},\qquad
 I_\infty=\{i:\dim_E\tau_i=\infty\}.
\]
For $i\in I_{\mathrm{fin}}$, the representation $\tau_i$ is trivial on $G_1$.
Indeed, its open normal kernel contains upper unipotents with entries in a
sufficiently small ideal of $\mathcal O_F$. Conjugation by all powers of
$\operatorname{diag}(\pi_F,\pi_F^{-1})$ gives every upper unipotent; the Weyl
element gives every lower unipotent. These root groups generate $G_1$.
Exactness of taking $H_{n_*}$-invariants gives
$0<\dim_E\tau_i^{H_{n_*}}\leq\dim_E\rho_i^{H_{n_*}}$. By~\eqref{eq5-3},
\[
 |I_{\mathrm{fin}}|\leq\sum_{i\in I_{\mathrm{fin}}}\dim_E\rho_i^{H_{n_*}}
 \leq C_{\mathrm{fd}}p^{dn_*}.
\]
Proposition~\ref{prop5-4} gives $\kappa_H=\kappa_H(H)>0$ and a threshold
$n_0(H,\tau_i)$ for each $i\in I_\infty$. Since $I_\infty$ is finite, choose
\[
 n\geq\max\bigl(\{n_*\}\cup\{n_0(H,\tau_i):i\in I_\infty\}\bigr).
\]
The index $n$ may depend on the chain, but $C_{\mathrm{fd}}/\kappa_H$ does
not. Exactness of taking $H_n$-invariants and~\eqref{eq5-3} give
\[
 \begin{aligned}
 |I_\infty|\kappa_Hp^{dn}
  &\leq\sum_{i\in I_\infty}\dim_E\tau_i^{H_n}\\
  &\leq\sum_{i=1}^\ell\dim_E\rho_i^{H_n}
     \leq C_{\mathrm{fd}}p^{dn}.
 \end{aligned}
\]
Since $\ell=|I_{\mathrm{fin}}|+|I_\infty|$, we may take
\[
 b_0=\left\lceil C_{\mathrm{fd}}p^{dn_*}
       +\frac{C_{\mathrm{fd}}}{\kappa_H}\right\rceil.
\]
Thus $Q$ has finite length at most $b_0$.
\end{proof}

\subsection{Proof of finite length}\label{sec5-5}
We first record the passage from coadmissible chains to finite length of
representations.

\begin{lemma}\label{lem5-8}
In the notation of Subsection~\ref{subsec:conductors-radii}, suppose finite strictly
decreasing chains of $G$-stable coadmissible submodules of $M_{\an}$ have
a common length bound. Then $\Pi^{\an}$ and
$\Pi$ have finite length in the categories of admissible locally analytic
and admissible Banach representations of $G$, respectively.
\end{lemma}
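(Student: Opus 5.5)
The proof passes from chains of coadmissible modules to chains of representations, using the anti-equivalences of Schneider--Teitelbaum. For the locally analytic part, recall that $\Sigma\mapsto\Sigma'_b$ is an anti-equivalence between admissible locally analytic $G$-representations and coadmissible $D(H,E)$-modules with a compatible $G$-action. Here we use Lemma~\ref{lem2-3} to identify $\Pi^{\an}$ with $(\Pi|_{G_1})^{\an}$ and $M_{\an}=D_H\otimes_{\Lambda_H}M$. Under this anti-equivalence, closed $G$-stable subrepresentations $\Sigma\subseteq\Pi^{\an}$ that are admissible correspond to $G$-stable coadmissible quotients of $M_{\an}$. The kernel of such a quotient is a $G$-stable coadmissible submodule, by \cite[Lem. 3.6 and Cor. 3.4]{bib6}. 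Inclusions reverse, and strictness is preserved because the functor is faithful and exact. Hence a strictly increasing chain of admissible subrepresentations of $\Pi^{\an}$ of length $\ell$ gives a strictly decreasing chain of $G$-stable coadmissible submodules of $M_{\an}$ of the same length. By hypothesis $\ell$ is bounded, so $\Pi^{\an}$ has finite length in the abelian category of admissible locally analytic representations. This category is abelian by \cite[\S6]{bib6}, so finite length is equivalent to a uniform bound on such chains.

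For $\Pi$, the plan is to reduce to the first part through the exact, faithful functor $\Sigma\mapsto\Sigma^{\an}$ on admissible Banach representations \cite[Thm. 7.1]{bib6}. Given a strictly increasing chain $0=\Pi_0\subsetneq\Pi_1\subsetneq\cdots\subsetneq\Pi_\ell=\Pi$ of closed $G$-stable subspaces, each $\Pi_i$ and each quotient $\Pi_i/\Pi_{i-1}$ is admissible Banach, since the admissible Banach category is abelian. Exactness gives $\Pi_i^{\an}/\Pi_{i-1}^{\an}\simeq(\Pi_i/\Pi_{i-1})^{\an}$. Density of locally analytic vectors in a nonzero admissible Banach representation \cite[Thm. 7.1]{bib6} makes this quotient nonzero. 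So $(\Pi_i^{\an})_i$ is a strictly increasing chain of admissible locally analytic subrepresentations of $\Pi^{\an}$, and its length is bounded by the first part. Thus $\Pi$ has finite length.

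The only delicate point is checking that the subobjects used are exactly those corresponding to $G$-stable coadmissible submodules of $M_{\an}$, and that this correspondence is compatible with restriction from $G$ to $H$. Concretely, one needs that a closed $G$-stable subspace of $\Pi^{\an}$ is admissible exactly when its strong dual quotient is coadmissible over $D(H,E)$. I would handle this with \cite[Prop. 6.4 and Lem. 6.1]{bib6}: for coadmissible $X$, closed submodules are coadmissible, and admissible subrepresentations of admissible representations are closed. With that, both assertions follow from the common length bound.
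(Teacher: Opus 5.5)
Your proposal is correct and follows essentially the same route as the paper. Strong duality turns admissible subrepresentations of $\Pi^{\an}$ into $G$-stable coadmissible submodules of $M_{\an}$ (their annihilators), reversing inclusions and preserving strictness; and exactness of $(-)^{\an}$, together with density of locally analytic vectors in nonzero admissible Banach quotients, transfers strict chains of closed $G$-stable subspaces of $\Pi$ to strict chains in $\Pi^{\an}$.
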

\begin{proof}
Given a finite strictly decreasing chain of admissible locally analytic
$G$-subrepresentations of $\Pi^{\an}$, take annihilators and reverse the
order. Strong duality \cite[Thm. 6.3 and Prop. 6.4]{bib6} gives a finite
strictly decreasing chain of $G$-stable coadmissible submodules of $M_{\an}$
of the same length.
Thus $\Pi^{\an}$ has finite length. Choose a compact open subgroup
$K\subseteq G$. Locally analytic vectors can be computed on $K$.
For a strict inclusion $A\subsetneq B$ of closed $G$-stable Banach
subspaces, locally analytic vectors are dense in the nonzero representation
$B/A$, so $(B/A)^{\an}\ne0$. Exactness of taking locally analytic vectors
gives $(B/A)^{\an}\simeq B^{\an}/A^{\an}$ \cite[Thm. 7.1]{bib6}.
Thus every finite strictly decreasing chain of closed $G$-stable Banach
subspaces gives a finite strictly decreasing chain of admissible locally
analytic $G$-subrepresentations.
The bound just proved therefore also gives finite length of $\Pi$.
\end{proof}

For a nonzero coadmissible $D_H$-subquotient $Q$ of $M_{\an}$,
$d_{D_H}(Q)=0$ if and only if every $Q_n$ is finite-dimensional over $E$,
by Lemma~\ref{lem2-2}.
For $p>2$, use the subgroup and torus of Section~\ref{sec:analytic-torus};
Proposition~\ref{prop3-6} identifies this condition with $\delta_T(Q)=0$.

The proof of Corollary~\ref{cor:fixed-radius-length} gives a bound
depending on $n$ for the $E$-dimensions of subquotients of $M_n$ of
canonical dimension zero. Propositions~\ref{prop5-5} and~\ref{prop5-6}
give the uniform bounds for $G$-stable coadmissible chains used in
Theorem~\ref{thm:finite-length}.

\begin{proof}[Proof of Theorem \ref{thm:finite-length} for $p>2$]
Use the subgroup and torus of Section~\ref{sec:analytic-torus}, after applying Lemma~\ref{lem2-3} and making
the fixed finite extension of $E$. Work with $G$-stable coadmissible
subquotients of $M_{\an}$ and the exact dimension function $\delta_T$ from
Subsection~\ref{subsec:torus-finiteness}.
Take a finite strictly decreasing chain in such a subquotient $Q$,
adjoining $Q$ and $0$ as endpoints if necessary.
For each nonzero successive quotient $S$, Subsection~\ref{subsec:torus-finiteness} gives
\[
 \dim\operatorname{Supp}_{D_{T,n}}S_n=\delta_T(S)
\]
for all sufficiently large $n$. Since the chain is finite, choose one
$n\geq n_*$ for which this equality holds for every such $S$.
Exact base change gives a finite strictly decreasing chain of
$D_{H,n}$-submodules in the subquotient $Q_n$ of $M_n$.
Theorem~\ref{thm4-10} bounds the number of successive quotients with
positive canonical dimension by $B_d$, independently of the radius and
the chain.

If $\chi$ is algebraic, take $b_0=B$ from Proposition~\ref{prop5-5}
for $G=D_F^\times$, and take the bound $b_0$ in Proposition~\ref{prop5-6}
for $G=\GL_2(F)$. If $\chi$ is nonalgebraic, take $b_0=0$.
This bounds the lengths of finite strictly decreasing chains of
$G$-stable coadmissible submodules in subquotients $S$ with $\delta_T(S)=0$.
The successive quotients of positive canonical dimension divide the chain into at most
$B_d+1$ consecutive subchains. For each nonempty subchain, let $S$ be the
quotient of its first module by its last. Exactness of $\delta_T$ gives
$\delta_T(S)=0$, so the subchain has length at most $b_0$.
Thus the original chain has length at most $B_d+(B_d+1)b_0$.
Lemma~\ref{lem5-8} gives the assertions for $\Pi^{\an}$ and $\Pi$.
\end{proof}

\appendix
\section{A simpler proof for \texorpdfstring{$F=\Q_p$}{F=Qp}}\label{appA}
We give a simpler proof of Theorem~\ref{thm:finite-length} for $F=\Q_p$.
The main simplification concerns the estimate for subquotients of
canonical dimension zero at a fixed radius.
Proposition~\ref{propA-3} directly bounds the dimensions of their smooth
factors in terms of the conductor, using an associated graded module
of Krull dimension at most one.
Ranks over a polynomial ring in one variable bound the number of successive
quotients of canonical dimension one. Combining these estimates with the bounds in canonical dimension
zero from Subsection~\ref{sec5-4} proves finite length for every prime.

\subsection{Estimates for graded modules}\label{app:graded-estimates}
Let $k$ be a field, let $r\geq1$, and give the variables of
$k[X_1,\ldots,X_r]$ positive rational degrees. Let $N$ be a finite graded
$k[X_1,\ldots,X_r]$-module of Krull dimension at most one. Put
\[
 \mathfrak m=(X_1,\ldots,X_r),\qquad
 N_{\mathrm{tor}}=\{x\in N:\mathfrak m^ax=0\text{ for some }a\geq1\}.
\]
The module $N_{\mathrm{tor}}$ has finite length.

Multiply the degrees by a common denominator, writing
$\deg X_i=a_i\in\Z_{>0}$, and put $\ell=\operatorname{lcm}(a_1,\ldots,a_r)$.

\begin{lemma}\label{lemA-1}
There are a finite extension $k'/k$ and a nonzero homogeneous form
$\phi\in k'[X_1,\ldots,X_r]$ of degree $\ell$ such that, writing
\[
 N'=k'\otimes_kN,\qquad N'_{\mathrm{tor}}=k'\otimes_kN_{\mathrm{tor}},
\]
the module $N'$ is finite over $k'[\phi]$ and $N'/N'_{\mathrm{tor}}$ is
finite free over $k'[\phi]$.
\end{lemma}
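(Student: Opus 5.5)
The plan is a graded Noether normalization, carried out after passing from $N$ to the torsion-free quotient. Put $\overline N=N/N_{\mathrm{tor}}$. Since $N_{\mathrm{tor}}$ has finite length, $\overline N$ is a finite graded module. Every associated prime of $\overline N$ is homogeneous, different from $\mathfrak m$, and of dimension one, because $\operatorname{Kdim}N\leq1$. If $\overline N=0$, take $\phi=X_1^{\ell/a_1}$. Then $N'$ has finite length, so it is finite over $k'[\phi]$, and the zero module is free; this case is trivial.

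Otherwise the associated primes $\mathfrak p_1,\ldots,\mathfrak p_t$ of $\overline N$ are finitely many homogeneous primes with $\dim k[X]/\mathfrak p_j=1$. I would pass to a finite extension $k'$ so that each geometric component is defined over $k'$; infinitude of $k'$ is not needed, only enough elements to avoid finitely many bad choices. The first key step is to find a homogeneous $\phi$ of degree $\ell$ lying in no minimal prime of $k'\otimes_k\overline N$. The natural candidates are the monomials $X_i^{\ell/a_i}$, all of degree $\ell$, and I would use linear combinations $\phi=\sum_i c_iX_i^{\ell/a_i}$ with $c_i\in k'$. Each such prime $\mathfrak q$ is homogeneous and different from $\mathfrak m$, so some $X_i\notin\mathfrak q$, and hence $X_i^{\ell/a_i}\notin\mathfrak q$. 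The set of coefficient vectors $c$ with $\phi\in\mathfrak q$ is therefore a proper $k'$-subspace of $k'^r$. There are finitely many primes, and a finite union of proper subspaces is proper once $|k'|$ exceeds their number; enlarge $k'$ finitely if necessary. I expect this avoidance step to be the main obstacle, since $k$ may be finite and the degrees are weighted; that is why the same-degree powers $X_i^{\ell/a_i}$ are used.

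Next, I would prove finiteness over $k'[\phi]$. The module $N'/\phi N'$ has Krull dimension zero. Indeed, $\phi$ avoids every one-dimensional prime in the support, because these are the minimal primes of $\overline N'$; the only one-dimensional supports come from $\overline N$, and $N'_{\mathrm{tor}}$ has dimension zero. A graded module of finite length is finite-dimensional over $k'$. Lifting a homogeneous basis and using graded Nakayama, which applies because all degrees are positive and the grading is bounded below, shows that $N'$ is generated over $k'[\phi]$ by finitely many elements. Hence $N'$ is finite over $k'[\phi]$.

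Finally, I would show freeness. The quotient $N'/N'_{\mathrm{tor}}=k'\otimes_k\overline N$ is finite over the graded PID $k'[\phi]$, so it suffices to show it is $\phi$-torsion-free. A $\phi$-torsion element has an associated prime containing $\phi$. The associated primes of $\overline N'$ are one-dimensional and avoid $\phi$, and $\mathfrak m$ is not associated because $\mathfrak m$-torsion was removed; flat base change preserves the absence of $\mathfrak m$-torsion. So no $\phi$-torsion exists, and a finite graded torsion-free module over $k'[\phi]$ is free. Here I would use that $N'_{\mathrm{tor}}=k'\otimes_kN_{\mathrm{tor}}$ indeed consists of all $\mathfrak m$-power torsion of $N'$, which holds by flatness of $k\to k'$.
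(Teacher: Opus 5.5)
Your proposal is correct and follows essentially the paper's proof. Both arguments choose a form of degree $\ell$ after a finite extension so that it avoids the finitely many associated primes of the torsion-free quotient. Both then deduce $\dim_{k'}N'/\phi N'<\infty$, get finiteness over $k'[\phi]$ by graded Nakayama, and get freeness from torsion-freeness over the principal ideal domain $k'[\phi]$.

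The differences are cosmetic. The paper picks $\phi$ generically among all weighted-degree-$\ell$ forms over $\overline k$ and then notes that its coefficients lie in a finite extension. You instead count proper subspaces inside the span of the $X_i^{\ell/a_i}$ over a sufficiently large finite $k'$, keeping the number of primes fixed by first defining the geometric components over $k'$. Finally, your step from $\phi$-torsion-freeness to torsion-freeness over $k'[\phi]$ implicitly uses the leading-homogeneous-component argument that the paper writes out.
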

\begin{proof}
If $N/N_{\mathrm{tor}}=0$, take $k'=k$ and any nonzero degree-$\ell$ form;
then $N=N_{\mathrm{tor}}$ is finite-dimensional. Suppose that
$N/N_{\mathrm{tor}}\ne0$ and put $V_\ell=k[X_1,\ldots,X_r]_\ell$, the
space of homogeneous forms of weighted degree $\ell$. Fix an algebraic
closure $\overline k$ of $k$ and write
$\overline V_\ell=\overline k\otimes_kV_\ell$ and
$\overline N=\overline k\otimes_k(N/N_{\mathrm{tor}})$.
Flat base change preserves the absence of $\mathfrak m$-torsion.
Every associated prime $\mathfrak p$ of $\overline N$ therefore meets
$\overline V_\ell$ in a proper linear subspace: otherwise $\mathfrak p$
would contain $X_i^{\ell/a_i}$ for every $i$, hence $\mathfrak p=\mathfrak m$,
contradicting the absence of $\mathfrak m$-torsion. Since $\overline k$ is
infinite, finitely many proper linear subspaces cannot cover
$\overline V_\ell$, so choose
\[
 \phi\in\overline V_\ell\setminus
 \bigcup_{\mathfrak p\in\operatorname{Ass}_{\overline k[X_1,\ldots,X_r]}(\overline N)}
  (\mathfrak p\cap\overline V_\ell).
\]
Its coefficients lie in a finite extension $k'/k$. Then $\phi$ is regular on $N'/N'_{\mathrm{tor}}$. In particular, $\phi$ does not vanish identically on any one-dimensional component of
$\operatorname{Supp}_{k'[X_1,\ldots,X_r]}(N'/N'_{\mathrm{tor}})$.
Since this support has Krull dimension at most one, the quotient by $\phi$
has dimension zero. The quotient is graded, so its support satisfies
\[
 \operatorname{Supp}_{k'[X_1,\ldots,X_r]}
 \bigl((N'/N'_{\mathrm{tor}})/\phi(N'/N'_{\mathrm{tor}})\bigr)
 \subseteq\{(X_1,\ldots,X_r)\}.
\]
This quotient and $N'_{\mathrm{tor}}$ are finite-dimensional.
Hence $N'/\phi N'$ is finite-dimensional.
Positive grading shows that $N'$ is finite over $k'[\phi]$: homogeneous
lifts of a basis modulo $\phi$ generate, by induction on degree.
Suppose $P(\phi)x=0$ for a nonzero polynomial $P(t)\in k'[t]$ and a
nonzero $x\in N'/N'_{\mathrm{tor}}$. Let $ct^j$ be the leading term of
$P(t)$ and $x_{\max}$ the highest nonzero homogeneous component of $x$.
Since $\deg\phi>0$, the highest homogeneous component of $P(\phi)x$
is $c\phi^jx_{\max}$, which is nonzero by regularity of $\phi$.
This contradiction shows that $N'/N'_{\mathrm{tor}}$ is torsion-free.
Being finite, it is free over the principal ideal domain $k'[\phi]$.
\end{proof}

Fix $k'$ and $\phi$ as in Lemma~\ref{lemA-1}. For a graded subquotient
$Y$ of $N$, set
\[
 e_{\mathrm{res}}(Y)=\operatorname{rank}_{k'[\phi]}(k'\otimes_kY).
\]
This integer is additive in short exact sequences and is positive exactly
when $Y$ has Krull dimension one. Put
\[
 \begin{aligned}
 m_{\mathrm{res}}=m_{\mathrm{res}}(N;k',\phi)&=e_{\mathrm{res}}(N)\\
 &=\dim_{k'}\bigl((N'/N'_{\mathrm{tor}})/\phi(N'/N'_{\mathrm{tor}})\bigr).
 \end{aligned}
\]
If $N/N_{\mathrm{tor}}=0$, then $m_{\mathrm{res}}=0$.

Put $C_N=\dim_k N_{\mathrm{tor}}+r\,m_{\mathrm{res}}$.

\begin{lemma}\label{lemA-2}
For every graded submodule $C\subseteq N$ and every integer $q\geq1$,
\[
 \dim_k C/(X_1^q,\ldots,X_r^q)C
 \leq\dim_k N_{\mathrm{tor}}+rq\,m_{\mathrm{res}}\leq C_Nq.
\]
\end{lemma}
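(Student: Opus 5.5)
Base change along $k\to k'$ preserves dimensions, so we compute in $N'=k'\otimes_kN$ with $C'=k'\otimes_kC$. Write $J_q=(X_1^q,\ldots,X_r^q)$. Put $C'_{\mathrm{tor}}=C'\cap N'_{\mathrm{tor}}$ and $\overline C=C'/C'_{\mathrm{tor}}$, which embeds in $\overline N=N'/N'_{\mathrm{tor}}$. The exact sequence $0\to C'_{\mathrm{tor}}\to C'\to\overline C\to0$ stays right exact after applying $-\otimes k'[X]/J_q$. This gives
\[
 \dim C'/J_qC'\leq\dim C'_{\mathrm{tor}}+\dim\overline C/J_q\overline C
 \leq\dim_kN_{\mathrm{tor}}+\dim\overline C/J_q\overline C .
\]
It therefore suffices to show $\dim\overline C/J_q\overline C\leq rq\,m_{\mathrm{res}}$. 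The second inequality in the statement is then immediate from $q\geq1$ and the definition of $C_N$.

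By Lemma~\ref{lemA-1}, $\overline N$ is finite free over $k'[\phi]$ of rank $m_{\mathrm{res}}$. The submodule $\overline C$ is then also free over the PID $k'[\phi]$, of some rank $m'\leq m_{\mathrm{res}}$. Since $J_q\overline C\supseteq X_i^q\overline C$ for each $i$, we have
\[
 \dim\overline C/J_q\overline C\leq\dim\overline C/X_i^q\overline C
\]
for any single $i$. So the goal is to find one $i$ with $\dim\overline C/X_i^q\overline C\leq rq\,m'$, or at least $\leq rq\,m_{\mathrm{res}}$.

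For this, pick $i$ such that $X_i$ is regular on $\overline N$. If no single variable is regular, use $J_q\supseteq$ the ideal generated by the $X_i^q$ and argue with a generic linear combination. Multiplication by $X_i^q$ is then injective and commutes with $\phi$, so $\overline C/X_i^q\overline C$ is a finite torsion $k'[\phi]$-module. Its length equals the $\phi$-degree of the determinant of $X_i^q$ acting on the free module $\overline C\otimes k'(\phi)$. The determinant is multiplicative, so its degree is $q$ times the degree of $\det(X_i)$. The $k'$-dimension of this quotient is this degree multiplied by $\deg\phi=\ell$, once the grading is accounted for. That degree is at most $m'\cdot\ell/a_i$.

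The main obstacle is getting the stated constant $rq\,m_{\mathrm{res}}$, independent of the weights $a_i$, and handling the case where no $X_i$ is regular. To handle both, I would instead bound the torsion part directly. Filter $\overline C/J_q\overline C$ through the $r$ steps $\overline C/(X_1^q,\ldots,X_j^q)\overline C$. At each step, the new quotient is a quotient of $\overline C/X_j^q\overline C$, and that quotient is controlled by its generic fibre over $k'[\phi]$, which has dimension at most $q\,m_{\mathrm{res}}$. Summing over the $r$ steps gives the factor $r$. Of all the steps, this fibre estimate is the one I expect to need the most care.
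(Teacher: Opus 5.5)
Your first reduction is correct and matches the paper: right exactness splits off $C'\cap N'_{\mathrm{tor}}$, and $\overline C$ is free over the principal ideal domain $k'[\phi]$ of rank $m'\leq m_{\mathrm{res}}$. The gap is the remaining bound $\dim\overline C/J_q\overline C\leq rq\,m_{\mathrm{res}}$, for which you have no working argument.

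The single-variable route needs some $X_i$ to be a nonzerodivisor on $\overline N$, and this can fail. For example, $N=k[X_1,X_2]/(X_1X_2)$ has no $\mathfrak m$-torsion, yet $N/X_i^qN$ is infinite-dimensional for $i=1,2$. When a regular $X_i$ does exist, your scaling is off. The $k'$-dimension of $\overline C/X_i^q\overline C$ is $\deg_\phi\det(X_i^q)$ itself, with no factor $\ell$. With a homogeneous basis, $\det(X_i)$ is homogeneous of weighted degree $m'a_i$, hence a scalar multiple of $\phi^{m'a_i/\ell}$, and the dimension is $qm'a_i/\ell\leq qm'$.

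The fallback does not repair this. The modules $\overline C/(X_1^q,\ldots,X_j^q)\overline C$ are successive quotients of one another, so the tower bounds $\dim\overline C/J_q\overline C$ at best by its first term. It does not give a sum of $r$ finite contributions. Moreover, if $X_j$ is a zerodivisor on $\overline C$, then $\overline C/X_j^q\overline C$ is not $k'[\phi]$-torsion and is infinite-dimensional. If it is torsion, its generic fibre over $k'[\phi]$ is zero and says nothing about its $k'$-dimension.

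The missing idea is to compare $J_q$ with a power of $\phi$ rather than with powers of single variables. Your remark about a generic combination points this way, and the form $\phi$ from Lemma~\ref{lemA-1} is exactly such a generic form. Since $\phi$ is homogeneous of positive degree, $\phi\in\mathfrak m$. A monomial of ordinary total degree $rq$ in $r$ variables has some exponent at least $q$, so $\phi^{rq}\in\mathfrak m^{rq}\subseteq J_q$. Hence
\[
 \dim_{k'}\overline C/J_q\overline C\leq\dim_{k'}\overline C/\phi^{rq}\overline C
 =rq\,m'\leq rq\,m_{\mathrm{res}},
\]
because $\overline C\simeq k'[\phi]^{m'}$ and $\dim_{k'}k'[\phi]/(\phi^{rq})=rq$. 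This pigeonhole step is the source of the factor $r$. The bound is independent of the weights $a_i$ and needs no regular variable. The paper makes the comparison $J_q\supseteq(\phi^{rq})$ on $C$ before splitting off the torsion, which gives the same estimate.
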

\begin{proof}
For a finite-dimensional $k$-space $U$,
$\dim_{k'}(k'\otimes_kU)=\dim_kU$. We may therefore replace $k$ by $k'$ and
suppress primes. Put $C_0=C/(C\cap N_{\mathrm{tor}})$.
As a $k[\phi]$-submodule of $N/N_{\mathrm{tor}}$, the module $C_0$ is
finite free of rank at most $m_{\mathrm{res}}$.

Since $\phi\in\mathfrak m$,
\[
 \phi^{rq}\in\mathfrak m^{rq}\subseteq(X_1^q,\ldots,X_r^q).
\]
The last inclusion follows by considering ordinary total degree. Consequently
\[
 \begin{aligned}
 \dim C/(X_1^q,\ldots,X_r^q)C
 &\leq\dim C/\phi^{rq}C\\
 &\leq\dim_k N_{\mathrm{tor}}+\dim C_0/\phi^{rq}C_0\\
 &=\dim_k N_{\mathrm{tor}}+rq\operatorname{rank}_{k[\phi]}C_0\\
 &\leq\dim_k N_{\mathrm{tor}}+rq\,m_{\mathrm{res}}.
 \end{aligned}
\]
\end{proof}

\begin{lemma}\label{lemA-4}
Let $R=k[\eps,X_1,\ldots,X_r]$ have a positive rational grading, and let
$P$ be a finite graded $R$-module on which $\eps$ is regular.
Suppose $P/\eps P\simeq N$, with $N$ and $m_{\mathrm{res}}$ as above.
In any finite chain of graded $R[1/\eps]$-submodules of $P[1/\eps]$,
at most $m_{\mathrm{res}}$ successive quotients have Krull dimension two.
\end{lemma}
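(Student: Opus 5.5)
The idea is to transport the chain from $P[1/\eps]$ to an $\eps$-saturated chain in $P$, reduce modulo $\eps$ to get a filtration of $N$, and count the relevant quotients with the additive invariant $e_{\mathrm{res}}$.

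\emph{Step 1: saturate the chain.} Write the chain as $P[1/\eps]=Y_0\supsetneq Y_1\supsetneq\cdots\supsetneq Y_t=0$, after adjoining the endpoints; adjoining them only adds successive quotients. Put $P_j=P\cap Y_j$, intersecting inside $P[1/\eps]$. This makes sense because $\eps$ is regular on $P$, so $P\hookrightarrow P[1/\eps]$. Each $P_j$ is a graded $R$-submodule of $P$ with $P_j[1/\eps]=Y_j$.

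The quotient $P/P_j$ embeds into $P[1/\eps]/Y_j$, on which $\eps$ acts invertibly. Hence $P/P_j$ is $\eps$-torsion-free, and so is its submodule $S_j=P_{j-1}/P_j$. Moreover $S_j[1/\eps]\simeq Y_{j-1}/Y_j$ by exactness of localization.

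\emph{Step 2: reduce modulo $\eps$.} Since each $S_j$ is $\eps$-torsion-free, $\operatorname{Tor}_1^R(R/\eps,S_j)=0$. The sequences $0\to P_j\to P_{j-1}\to S_j\to0$ therefore remain exact after applying $-\otimes_RR/\eps$. Inductively, $P_j/\eps P_j\hookrightarrow P_{j-1}/\eps P_{j-1}$, so $N\simeq P/\eps P$ acquires a finite filtration by graded submodules with successive quotients $S_j/\eps S_j$.

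Because $e_{\mathrm{res}}$ is additive in short exact sequences of graded subquotients of $N$, we obtain
\[
 \sum_j e_{\mathrm{res}}(S_j/\eps S_j)=e_{\mathrm{res}}(N)=m_{\mathrm{res}}.
\]

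\emph{Step 3: compare Krull dimensions.} Here I need, for a nonzero finite graded $R$-module $S$ on which $\eps$ is regular, the equalities
\[
 \operatorname{Kdim}S[1/\eps]=\operatorname{Kdim}S=\operatorname{Kdim}(S/\eps S)+1.
\]
The second equality is standard for a regular homogeneous element of positive degree in a positively graded Noetherian setting.

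For the first, reduce to $S=R/\mathfrak p$ with $\mathfrak p$ a minimal prime. Such a prime is homogeneous and does not contain $\eps$, since $\eps$ is regular. For a positively graded domain $A$ and a homogeneous $\eps$ of positive degree, rescaling degrees gives $\dim A[1/\eps]=\dim A_{(\eps)}+1=\dim A$. Here $A[1/\eps]$ is a Laurent-polynomial-type extension of the degree-zero part $A_{(\eps)}$, up to a finite extension coming from the degree lattice.

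Now apply this to the quotients. By the standing hypothesis, $S_j/\eps S_j$ is a subquotient of $N$, so $\operatorname{Kdim}(S_j/\eps S_j)\leq1$. Hence $Y_{j-1}/Y_j$ has Krull dimension two exactly when $S_j/\eps S_j$ has Krull dimension one, which happens exactly when $e_{\mathrm{res}}(S_j/\eps S_j)\geq1$. Combining this with the sum in Step 2, at most $m_{\mathrm{res}}$ indices $j$ can have successive quotients of Krull dimension two.

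I expect the main obstacle to be Step 3, namely the careful justification of $\dim A[1/\eps]=\dim A$ when the grading is rational and not generated in degree one. My first attempt would be to pass to a Veronese-type subring and use finiteness to preserve dimension. Steps 1 and 2 are routine saturation and Tor arguments.
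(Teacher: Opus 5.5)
Your proposal is correct and follows essentially the same route as the paper: saturate the chain via $P_j=P\cap Y_j$, reduce modulo $\eps$ to obtain a filtration of $N$, and count using additivity and positivity of $e_{\mathrm{res}}$ together with $\operatorname{Kdim}S_j=1+\operatorname{Kdim}(S_j/\eps S_j)$. The step you flag as the main obstacle is unnecessary, because the count needs only the implication from $\operatorname{Kdim}(Y_{j-1}/Y_j)=2$ to $\operatorname{Kdim}(S_j/\eps S_j)=1$; for that implication the trivial inequality $\operatorname{Kdim}S_j[1/\eps]\leq\operatorname{Kdim}S_j$ (localization cannot increase dimension) suffices, and this is exactly what the paper uses.
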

\begin{proof}
Write the chain as $V_0\supseteq\cdots\supseteq V_s$, and put
$P_i=P\cap V_i$. Clearing denominators gives $P_i[1/\eps]=V_i$.
If $x\in P$ and $\eps x\in P_i$, then
$x\in V_i$ because $\eps$ is invertible on $V_i$, so $x\in P_i$.
Thus $P_i$ is $\eps$-saturated in $P$, so $P/P_i$ and $Q_i=P_{i-1}/P_i$ are $\eps$-torsion-free.
Exactness of localization identifies
\[
 Q_i[1/\eps]\simeq V_{i-1}/V_i.
\]
Reduction gives exact sequences
\[
 0\longrightarrow P_i/\eps P_i\longrightarrow P_{i-1}/\eps P_{i-1}
 \longrightarrow Q_i/\eps Q_i\longrightarrow0.
\]
The modules $P_i/\eps P_i$ embed in $N$, so the displayed sequences
describe a chain in $N$ and its successive quotients. For $Q_i[1/\eps]$
of dimension two, regularity of $\eps$ on $Q_i$ and the positive grading give
\[
 2=\operatorname{Kdim}Q_i[1/\eps]\leq\operatorname{Kdim}Q_i
 =1+\operatorname{Kdim}(Q_i/\eps Q_i)\leq2.
\]
Hence $\operatorname{Kdim}(Q_i/\eps Q_i)=1$ and
$e_{\mathrm{res}}(Q_i/\eps Q_i)>0$. Additivity of $e_{\mathrm{res}}$ along the chain $(P_i/\eps P_i)_i$ gives
\[
 \#\{i:\operatorname{Kdim}Q_i[1/\eps]=2\}
 \leq\sum_i e_{\mathrm{res}}(Q_i/\eps Q_i)\leq m_{\mathrm{res}}.
\]
\end{proof}

\subsection{Bounds for subquotients at each radius}\label{app:conductor-and-chain-bounds}
We apply Lemmas~\ref{lemA-2} and~\ref{lemA-4} to subquotients
at each radius.
We use the algebras and filtrations of Section~\ref{sec:distribution-algebras}. Let $M$ be a finite nonzero $\Lambda_H$-module
with $d_{\Lambda_H}(M)\leq1$, and put $M_{\an}=D_H\otimes_{\Lambda_H}M$ and
$M_n=D_{H,n}\otimes_{\Lambda_H}M$. Let $n_{\mathrm{res}}$ be the threshold
in~\eqref{eq2-4}. The fixed associated graded module $\gr\overline M$ in that comparison has
Krull dimension at most one. Apply the preceding estimates with $k=k_E$,
$r=d_H$, and $N=\gr\overline M$.

To count successive quotients of canonical dimension one, put
$P=\gr_{r_n}M^\circ$.
Multiply the degrees at radius $r_n$ by $p^n$, as in
Subsection~\ref{subsec:coadmissible-lattices}; then~\eqref{eq2-4}
identifies $P/\eps P$ with the fixed graded module $N$.
Equip a finite chain of submodules of $M_n$ with the induced good filtrations,
and apply Lemma~\ref{lemA-4} to the resulting chain of associated graded
modules. Together with~\eqref{eq2-5}, this gives
\begin{equation}\label{eqA-3}
 \#\{\text{successive quotients of canonical dimension one}\}
 \leq m_{\mathrm{res}}
\end{equation}
for $n\geq n_{\mathrm{res}}$.
The parameter for the fixed associated graded module $\gr\overline M$ is chosen once, so the
bound is independent of the chain and the radius.

\begin{corollary}\label{corA-5}
Suppose $d_{\Lambda_H}(M)\leq1$. Fix $n\geq n_{\mathrm{res}}$. If the
$E$-dimensions of all subquotients of $M_n$ of canonical dimension zero
have a common bound, then $M_n$ has finite length over $D_{H,n}$.
\end{corollary}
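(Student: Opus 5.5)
The plan is to bound the length of an arbitrary finite strictly decreasing chain of $D_{H,n}$-submodules of $M_n$ by a number depending only on $n$, $M$ and the assumed dimension bound; finite length then follows, since $M_n$ is Noetherian and a module all of whose finite chains have bounded length has finite length. Fix $n\geq n_{\mathrm{res}}$ and let $C_0$ be a common bound for $\dim_E Q$ over all subquotients $Q$ of $M_n$ of canonical dimension zero.

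\emph{Step 1: positive dimension.} Since $d_{\Lambda_H}(M)\leq1$ and $d_{D_{H,n}}(M_n)=d_{\Lambda_H}(M)$ for $n\geq n_{\mathrm{res}}$, every nonzero subquotient of $M_n$ has canonical dimension $0$ or $1$. Given a chain $M_n=Y_0\supsetneq Y_1\supsetneq\cdots\supsetneq Y_s=0$, \eqref{eqA-3} bounds the number of successive quotients of canonical dimension one by $m_{\mathrm{res}}$. This applies because the filtrations on the $Y_i$ are induced from the quotient filtration of $M_n$, so their associated graded modules form a chain in $\gr_{r_n}M_n=P[1/\eps]$. By \eqref{eq2-5}, canonical dimension one of a successive quotient corresponds to Krull dimension two of its graded module, which is exactly the situation of Lemma~\ref{lemA-4}.

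\emph{Step 2: dimension-zero stretches.} The at most $m_{\mathrm{res}}$ quotients of canonical dimension one cut the chain into at most $m_{\mathrm{res}}+1$ consecutive subchains, all of whose successive quotients have canonical dimension zero. For such a subchain $Y_a\supsetneq\cdots\supsetneq Y_b$, exactness of $d_{D_{H,n}}$ (the maximum over an extension) shows that $Y_a/Y_b$ has canonical dimension zero. It is a subquotient of $M_n$, so $\dim_E(Y_a/Y_b)\leq C_0$ by hypothesis. Each step is strict, so the subchain has length at most $C_0$.

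\emph{Conclusion.} Altogether, every finite strictly decreasing chain has length at most $m_{\mathrm{res}}+(m_{\mathrm{res}}+1)C_0$, and $M_n$ has finite length over $D_{H,n}$.

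The main obstacle is the transfer in Step 1: one must check that strictness of the induced filtrations keeps the associated graded sequences exact, so that Krull dimensions of the graded successive quotients match canonical dimensions via \eqref{eq2-5}. This is covered by the good-filtration remarks of Subsection~\ref{subsec:coadmissible-lattices}, so I expect it to be a routine citation rather than a genuine difficulty.
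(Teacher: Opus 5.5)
Your proposal is correct and follows the paper's proof essentially verbatim. You bound the canonical-dimension-one quotients by $m_{\mathrm{res}}$ via \eqref{eqA-3}, cut the chain into at most $m_{\mathrm{res}}+1$ stretches of canonical dimension zero, and bound each stretch using exactness and the assumed dimension bound. Two points you make explicit are left implicit in the paper: the use of $d_{D_{H,n}}(M_n)=d_{\Lambda_H}(M)\leq1$ to exclude quotients of higher canonical dimension, and the resulting length bound $m_{\mathrm{res}}+(m_{\mathrm{res}}+1)C_0$.
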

\begin{proof}
For this $n$, Lemma~\ref{lemA-4}, applied as in~\eqref{eqA-3}, bounds the
number of successive quotients of canonical dimension one.
Lemma~\ref{lem2-2} shows that subquotients of canonical dimension zero
are finite-dimensional. The successive quotients of canonical dimension one
divide any finite strictly decreasing chain into at most
$m_{\mathrm{res}}+1$ consecutive subchains whose successive quotients
have canonical dimension zero. By exactness, each nonempty subchain
lies in a subquotient of canonical dimension zero. The assumed bound
on its $E$-dimension controls the length of the subchain. This gives a common bound for the
length of the whole chain.
\end{proof}

For the following proposition, assume that $M_{\an}$ has algebraic
infinitesimal character $\chi$, and let $V$ be the algebraic type
determined by $\chi$, as in Lemma~\ref{lem2-4}.
Assume also that the $H_1$-action on $V^\vee$ extends continuously
to $\widehat U_1$, as in Subsection~\ref{subsec:conductors-radii}.

\begin{proposition}\label{propA-3}
There are an integer $n_0=n_0(H,M)\geq n_{\mathrm{res}}$ and a constant
$C_1=C_1(H,M)>0$ such that, for every $n\geq n_0$, every
$D_{H,n}$-subquotient $Q$ of $M_n$ of canonical dimension zero,
written $Q\simeq V^\vee\otimes_EW^\vee$ with $W$ a finite-dimensional smooth
$H$-representation, satisfies
\[
 \dim_EW\leq C_1p^{c_H(W)}.
\]
\end{proposition}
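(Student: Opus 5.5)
Fix $n\geq n_{\mathrm{res}}$ and a $D_{H,n}$-subquotient $Q\simeq V^\vee\otimes_EW^\vee$ of $M_n$ of canonical dimension zero, and put $\nu=c_H(W)$; by Lemma~\ref{lem2-4}, $\nu\leq n$. The plan is to compare $\dim_EQ$ with the dimension of a reduction of a lattice in $Q$, and to bound that reduction through the fixed graded module $N=\gr\overline M$ using Lemma~\ref{lemA-2} with $q\approx p^{\nu}$. We give $Q$ the good filtration induced from the fixed filtration on $M_n$, i.e.\ $Q=Q^1/Q^2$ with $Q^2\subseteq Q^1\subseteq M_n$. Since $Q$ is finite-dimensional, $\gr_{r_n}Q$ is a finite graded $\gr E=k_E[\eps,\eps^{-1}]$-module, free of rank $\dim_EQ$. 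Setting $P_i=\gr_{r_n}M^\circ\cap\gr_{r_n}Q^i$, we obtain $\eps$-saturated lattices as in the proof of Lemma~\ref{lemA-4}. Reducing modulo $\eps$ and using \eqref{eq2-4} then gives a graded subquotient $\overline Q=C/C'$ of $N$ with $C'\subseteq C\subseteq N$ graded, and $\dim_{k_E}\overline Q=\dim_EQ$.

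The key step is to show that $\overline Q$ is killed by $(X_1^q,\ldots,X_{d_H}^q)$ with $q=p^{\nu+a}$ for a constant $a=a(H)$. Once this holds, $\overline Q$ is a quotient of $C/(X_1^q,\ldots,X_{d_H}^q)C$, and Lemma~\ref{lemA-2} gives
\[
 d_V\dim_EW=\dim_EQ\leq C_Nq=C_Np^{a}p^{\nu}.
\]
This yields the claim with $C_1=C_Np^a/d_V$. To prove the annihilation, observe that $g_i^{p^{\nu}}\in H_\nu$ acts on $W^\vee$ trivially, so on $Q$ it acts through $V^\vee$ alone. Then $b_i^{p^\nu}=(g_i-1)^{p^\nu}\equiv g_i^{p^\nu}-1$ modulo $p$-multiples of lower binomial terms. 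On $Q$, the element $g_i^{p^\nu}-1$ acts as $\exp(p^\nu x_i)-1$ through the finite-dimensional algebraic representation $V^\vee$. Since $H$ was shrunk so that $\log H$ acts by small matrices on $V^\vee$, this operator has small norm. Hence at radius $r_n$ the element $b_i^{p^\nu}$ acts on $Q$ by an operator whose norm is strictly smaller than $\|b_i^{p^\nu}\|_{r_n}=r_n^{w_ip^\nu}$ relative to the filtration. Its principal symbol $X_i^{p^\nu}$ therefore acts by zero on $\gr_{r_n}Q$, and hence on $\overline Q$, perhaps after replacing $p^\nu$ by $p^{\nu+a}$.

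The main obstacle is making this norm comparison rigorous. The filtration on $Q$ is the quotient of the lattice filtration and is not the operator-norm filtration coming from $V^\vee$, so the smallness of $\exp(p^\nu x_i)-1$ must be related to filtration degree shifts. I would try to write $g_i^{p^{\nu}}-1=p^{\nu}x_i\cdot u$ with $u\in\widehat U_\nu$ a power series of norm at most one. In the regime $n\geq\nu$, I would then show that $(g_i-1)^{p^{\nu+a}}-(g_i^{p^\nu}-1)^{p^a}$ lies in strictly higher filtration at radius $r_n$. This uses that the binomial coefficients $\binom{p^\nu}{j}$ are divisible by $p$ and that $\deg\eps=v_p(\varpi)>0$ exceeds the degree gap $w_i(p^{\nu+a}-j)/p^n$ once the rescaling by $p^n$ is taken into account. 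If this route fails, the fallback is a counting argument analogous to Corollary~\ref{cor4-4}, carried out on $\overline Q$ through the $H/H_\nu$-grading, but the direct symbol estimate is the version I expect to work. Finally, I would choose $n_0$ large enough that \eqref{eq2-4} holds and the constants stabilize.
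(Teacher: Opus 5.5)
Your reduction to a graded subquotient $\overline Q=C/C'$ of $N$ and the final appeal to Lemma~\ref{lemA-2} match the paper's endgame. The paper avoids needing your equality $\dim_{k_E}\overline Q=\dim_EQ$ by bounding the $k_E[\eps]$-rank of $\widetilde C/I_q\widetilde C$ via Nakayama.

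The gap is the key step: the claim that $X_i^{p^\nu}$, or $X_i^{p^{\nu+a}}$ with $a=a(H)$, annihilates $\gr_{r_n}Q$. Smallness of $\exp(p^\nu x_i)-1$ in a norm adapted to $V^\vee$ gives nothing uniform about the filtration on $Q$. That filtration is the quotient of the lattice filtration of $M_n$, an arbitrary norm on the finite-dimensional space $Q$. It is comparable to a norm built from $V^\vee\otimes W^\vee$ only with constants depending on $Q$ and $n$, and absorbing such constants by raising to a power destroys uniformity. Purely spectral information (the eigenvalues of $g_i^{p^\nu}-1$ on $Q$ are small) shows at best that the symbol is nilpotent on the free $\gr E$-module $\gr_{r_n}Q$. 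Its nilpotency index could then be as large as $\dim_EQ$, which makes your final estimate circular.

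Your remedies do not address this. The factorization $g_i^{p^\nu}-1=p^\nu x_iu$ with $u\in\widehat U_\nu$ lives at radius $r_\nu$; for $n>\nu$ the element $p^\nu x_i$ has $r_n$-norm $p^{n-\nu-w_i}$. The comparison of $(g_i-1)^{p^{\nu+a}}$ with $(g_i^{p^\nu}-1)^{p^a}$ concerns symbols in $\gr D_{H,n}$, not their action on $\gr Q$. The fallback via Corollary~\ref{cor4-4} is also unavailable, since it rests on the torus machinery of Sections~\ref{sec:analytic-torus}--\ref{sec4}, which needs $p>2$.

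The missing idea is an element that kills $Q$ \emph{exactly} and whose principal symbol is a pure power of $X_i$. Its symbol then kills $\gr Q$ for every good filtration, so the choice of filtration no longer matters. Such an element comes from Cayley--Hamilton on $V^\vee$: anything acting through $V^\vee$ alone satisfies a monic polynomial of degree $d_V$.

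The paper takes $\nu=\max\{n_0,c_H(W)\}\leq n$ and uses Lemma~\ref{lem2-4} and flatness to view $Q$ as a subquotient of $M_\nu$. It notes that $p^{\nu d_V}P_i(x_i)$ kills $Q$, where $P_i$ is the characteristic polynomial of $x_i$ on $V^\vee$. Since $\sigma_{r_\nu}(p^\nu x_i)=X_i^{q}$ in degree $w_i$, and $n_0$ makes the terms $c_{i,b}p^{\nu b}$ negligible, this annihilator has symbol $X_i^{d_Vq}$. Lemma~\ref{lemA-2} with exponent $d_Vq$ then gives $d_V\dim_EW\leq C_Nd_Vq$.

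Your setup at radius $r_n$ can be repaired the same way without changing radius. Your computation $\sigma_{r_n}(g_i^{p^\nu}-1)=X_i^{p^\nu}$ for $n\geq\nu$ is correct. Cayley--Hamilton for $\exp(p^\nu x_i)$ on $V^\vee$ gives the annihilator $\prod_\lambda\bigl((g_i^{p^\nu}-1)-(e^{p^\nu\lambda}-1)\bigr)$, with $\lambda$ running over the eigenvalues of $x_i$ on $V^\vee$. Its symbol is $X_i^{d_Vp^\nu}$ once $\nu$ exceeds a suitable $n_0$. In either version the exponent is $d_Vp^\nu$, so your $a$ must depend on $d_V$ and not only on $H$. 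Also, $\nu$ must be $\max\{n_0,c_H(W)\}$ rather than $c_H(W)$.
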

\begin{proof}
Put $d_V=\dim_EV$, and let
$P_i(t)=t^{d_V}+\sum_{b=1}^{d_V}c_{i,b}t^{d_V-b}$ be the characteristic
polynomial of $x_i=\log g_i$ on $V^\vee$.
Choose $n_0\geq n_{\mathrm{res}}$ so that
$v_p(c_{i,b})+b(n_0-w_i)>0$ whenever $c_{i,b}\ne0$.
By Lemma~\ref{lem2-2}, $Q$ is finite-dimensional over $E$. For $Q\ne0$ put
\[
 \nu=\max\{n_0,c_H(W)\},\qquad q=p^\nu\leq p^{n_0}p^{c_H(W)}.
\]
Lemma~\ref{lem2-4} gives $\nu\leq n$ and extends the action on $Q$ to $D_{H,\nu}$.
Consider the evaluation map
\[
 D_{H,\nu}\otimes_{D_{H,n}}Q\longrightarrow Q,\qquad a\otimes s\longmapsto as.
\]
The map $s\mapsto1\otimes s$ has closed image because $Q$ is
finite-dimensional and dense image because $D_{H,n}$ is dense in
$D_{H,\nu}$. Hence the two maps are inverse. By flatness of $D_{H,n}\to D_{H,\nu}$, the displayed isomorphism identifies $Q$ with a subquotient of $M_\nu$.

Put $y_{i,\nu}=p^\nu x_i$. Here $X_i$ denotes the symbol of $g_i-1$
at radius $r_\nu$, with the radius index suppressed as in
Subsection~\ref{subsec:coadmissible-lattices}.
By \cite[Prop. 3.1 and Lem. 4.1]{bib9}, the degree and principal
symbol of $y_{i,\nu}$ are given by
\begin{equation}\label{eqA-1}
 \sigma_{r_\nu}(y_{i,\nu})=X_i^q,\qquad\deg y_{i,\nu}=w_i.
\end{equation}
The smooth factor $W^\vee$ has zero Lie algebra action, so $x_i$ acts on
$Q$ as $x_i|_{V^\vee}\otimes1$.
By Cayley--Hamilton, $Q$ is annihilated by
\[
 p^{\nu d_V}P_i(x_i)=y_{i,\nu}^{d_V}
   +\sum_{b=1}^{d_V}c_{i,b}p^{\nu b}y_{i,\nu}^{d_V-b}.
\]
The inequalities used to choose $n_0$ give the principal symbol
\begin{equation}\label{eqA-2}
 \sigma_{r_\nu}\bigl(p^{\nu d_V}P_i(x_i)\bigr)=X_i^{d_Vq}.
\end{equation}
Choose a $D_{H,\nu}$-submodule $L\subseteq M_\nu$ surjecting onto $Q$.
Give $L$ the filtration induced from $M_\nu$ and $Q$ the resulting quotient
filtration. Thus $\gr L$ is a submodule of
$\gr M_\nu=(\gr_{r_\nu}M^\circ)[1/\eps]$ by~\eqref{eq2-4}.
Inside $\gr M_\nu$, set
\[
 \widetilde C=\gr_{r_\nu}M^\circ\cap\gr L,\qquad
 C=\widetilde C/\eps\widetilde C,\qquad
 I_q=(X_1^{d_Vq},\ldots,X_r^{d_Vq}).
\]
As in the proof of Lemma~\ref{lemA-4},
$\gr_{r_\nu}M^\circ/\widetilde C$ has no $\eps$-torsion.
Hence reduction modulo $\eps$ and~\eqref{eq2-4} give an injection
$C\hookrightarrow\gr\overline M$, while
$\widetilde C[1/\eps]=\gr L$.

Equation~\eqref{eqA-2} gives
\[
 (\widetilde C/I_q\widetilde C)[1/\eps]\twoheadrightarrow\gr Q.
\]
The module $\widetilde C/I_q\widetilde C$ is finite over $k_E[\eps]$,
since $I_q$ contains a power of each $X_i$. The reduction of
$\widetilde C/I_q\widetilde C$ modulo $\eps$ is $C/I_qC$.
Nakayama's lemma at $(\eps)$ bounds the $k_E[\eps]$-rank of
$\widetilde C/I_q\widetilde C$ by $\dim_{k_E}C/I_qC$.
Choose an $E$-basis of $Q$ compatible with its filtration. Then $\gr Q$ has rank $\dim_EQ=d_V\dim_EW$ over $\gr E=k_E[\eps,\eps^{-1}]$. Lemma~\ref{lemA-2}, applied with
exponent $d_Vq$, therefore gives
\[
 \begin{aligned}
 d_V\dim_EW&=\operatorname{rank}_{\gr E}\gr Q\\
 &\leq\operatorname{rank}_{k_E[\eps]}(\widetilde C/I_q\widetilde C)\\
 &\leq\dim_{k_E}C/I_qC\leq C_Nd_Vq.
 \end{aligned}
\]
The bound on $q$ gives the assertion with $C_1=C_Np^{n_0}$.
\end{proof}

\subsection{Proof of the main theorem over \texorpdfstring{$\Q_p$}{Qp}}\label{sec5-6}
We return to $M=\Pi'$ for the representation in
Theorem~\ref{thm:finite-length}, with $F=\Q_p$.
The theorem is immediate if $\Pi=0$, so assume $\Pi\ne0$ throughout
this subsection.

By Lemma~\ref{lem2-3}, $\Pi|_{G_1}$ is admissible and has locally analytic vectors
$\Pi^{\an}$. Choose a sufficiently small compact open uniform subgroup
$J\subseteq G_1$ whose Lie lattice satisfies
$[\log J,\log J]\subseteq p\log J$ for odd $p$, and
$[\log J,\log J]\subseteq8\log J$ for $p=2$. Put $H=J^p$;
then $H$ satisfies~\eqref{eq5-1}.
If $\chi$ is algebraic, choose $J$ small enough that $H$ also satisfies
the condition on the action of $V$ in
Subsection~\ref{subsec:conductors-radii}.
For $L\in\{J,H\}$, define $\nu_L(1)=\infty$ and $\nu_L(g)=a$ for
$g\in L^{p^a}\setminus L^{p^{a+1}}$, where $a\in\Z_{\geq0}$. Use
\[
 \eta=\nu_J+w,\qquad\omega_H=\eta|_H-1=\nu_H+w,\qquad
 w=\begin{cases}1,&p>2,\\3/2,&p=2.\end{cases}
\]
The bracket inclusions give the commutator inequality, and for both
valuations, every element of valuation greater than $p/(p-1)$ has a $p$th
root in the same group.

For this fixed $H$, $\Pi|_H$ is admissible. Its locally analytic vectors have infinitesimal character
$\chi_\Pi|_{Z(U(\mathfrak g_E))}$. For $F=\Q_p$, the nilpotent
cone in $\mathfrak g_E$ has dimension two. Thus
\cite[Thm. 6.1(b) and Lem. 3.9]{bib2}, applied to this representation,
give
\[
 d_{\Lambda_H}(M)=d_{D_H}(M_{\an})<2.
\]
Thus $d_{\Lambda_H}(M)\leq1$ by integrality of canonical dimension.
The subgroup $H$ satisfies the conditions of
Subsection~\ref{subsec:distribution-completions}, and its three basis
weights are $w$. Choose the lattice and presentation of
Subsection~\ref{subsec:coadmissible-lattices}. Equation~\eqref{eq2-4} gives
$\operatorname{Kdim}\gr\overline M\leq1$.
Apply Lemma~\ref{lemA-1} to $N=\gr\overline M$ over $k_E$ and write
$m_{\mathrm{res}}=m_{\mathrm{res}}(\gr\overline M;k',\phi)$ for the rank
defined in Subsection~\ref{app:graded-estimates}.
For every $n\geq n_{\mathrm{res}}$, \eqref{eqA-3} bounds by
$m_{\mathrm{res}}$ the number of successive quotients of canonical dimension
one in any finite chain of $D_{H,n}$-submodules of $M_n$.

If $\chi$ is algebraic, take $n_*=n_0$ and $C_{\mathrm{fd}}=C_1$ from
Proposition~\ref{propA-3}. For every $n\geq n_*$, each $D_{H,n}$-subquotient
$Q=V^\vee\otimes_EW^\vee$ of $M_n$ of canonical dimension zero, with
$W$ smooth, satisfies
\[
 \dim_EW\leq C_{\mathrm{fd}}p^{c_H(W)}.
\]
If $\chi$ is nonalgebraic, take $n_*=n_{\mathrm{res}}$; every
subquotient of canonical dimension zero vanishes.

For this threshold $n_*\geq1$, $M_n$ has finite length over $D_{H,n}$ for every $n\geq n_*$. Indeed, in the algebraic case,
Lemma~\ref{lem2-4} gives $c_H(W)\leq n$, so subquotients of canonical
dimension zero have $E$-dimension at most $d_VC_{\mathrm{fd}}p^n$;
in the nonalgebraic case they vanish. Corollary~\ref{corA-5} gives the conclusion.

\begin{proof}[Proof of Theorem \ref{thm:finite-length} for $F=\Q_p$]
We have $d_{D_H}(M_{\an})\leq1$.
Take a finite strictly decreasing chain of
$G$-stable coadmissible submodules in a subquotient of $M_{\an}$.
For each nonzero successive quotient $S$,
Subsection~\ref{subsec:coadmissible-lattices} gives
\[
 d_{D_{H,n}}(S_n)=d_{D_H}(S)
\]
for all sufficiently large $n$. Since the chain is finite, choose one
$n\geq n_*$ for which this equality holds for every such $S$.
Lift the chain to $M_{\an}$ and apply exact base change.
Equation~\eqref{eqA-3} bounds the number of successive quotients of canonical
dimension one by $m_{\mathrm{res}}$, independently of the radius and the chain.
Propositions~\ref{prop5-5} and~\ref{prop5-6} give a common bound $b_0$
for the lengths of finite strictly decreasing $G$-stable coadmissible
chains in subquotients of canonical dimension zero.
The successive quotients of canonical dimension one divide the chain into at
most $m_{\mathrm{res}}+1$ consecutive subchains. By exactness of canonical
dimension, each nonempty subchain is a chain in a subquotient of canonical
dimension zero, so its length is at most $b_0$. The whole chain therefore
has length at most $m_{\mathrm{res}}+(m_{\mathrm{res}}+1)b_0$.
Lemma~\ref{lem5-8} proves the assertion.
\end{proof}


\begin{thebibliography}{BGR84}
\bibitem[Ard21]{bib10}
K. Ardakov, \emph{Equivariant $\mathcal D$-modules on rigid analytic spaces},
Ast\'erisque \textbf{423} (2021), 1--161,
\url{https://doi.org/10.24033/ast.1145}.

\bibitem[AW13]{bib20}
K. Ardakov and S. Wadsley,
\emph{On irreducible representations of compact $p$-adic analytic groups},
Ann. of Math. (2) \textbf{178} (2013), no.~2, 453--557,
\url{https://doi.org/10.4007/annals.2013.178.2.3}.

\bibitem[BGR84]{bib16}
S. Bosch, U. G\"untzer, and R. Remmert,
\emph{Non-Archimedean Analysis: A Systematic Approach to Rigid Analytic Geometry},
Grundlehren Math. Wiss. \textbf{261}, Springer, Berlin, 1984,
\url{https://doi.org/10.1007/978-3-642-52229-1}.

\bibitem[BS08]{bib1}
M. Boyarchenko and M. Sabitova,
\emph{The orbit method for profinite groups and a $p$-adic analogue of Brown's theorem},
Israel J. Math. \textbf{165} (2008), 67--91,
\url{https://doi.org/10.1007/s11856-008-1004-3}.

\bibitem[Col19]{bib11}
P. Colmez,
\emph{Correspondance de Langlands locale $p$-adique et changement de poids},
J. Eur. Math. Soc. \textbf{21} (2019), no.~3, 797--838,
\url{https://doi.org/10.4171/JEMS/851}.

\bibitem[Con99]{bib15}
B. Conrad, \emph{Irreducible components of rigid spaces},
Ann. Inst. Fourier \textbf{49} (1999), no.~2, 473--541,
\url{https://doi.org/10.5802/aif.1681}.

\bibitem[Dos12]{bib18}
G. Dospinescu,
\emph{Actions infinit\'esimales dans la correspondance de Langlands locale $p$-adique},
Math. Ann. \textbf{354} (2012), no.~2, 627--657,
\url{https://doi.org/10.1007/s00208-011-0736-2}.

\bibitem[DPS23]{bib2}
G. Dospinescu, V. Pa\v{s}k\=unas, and B. Schraen,
\emph{Gelfand--Kirillov dimension and the $p$-adic Jacquet--Langlands correspondence},
J. Reine Angew. Math. \textbf{801} (2023), 57--114,
\url{https://doi.org/10.1515/crelle-2023-0033}.

\bibitem[Eme17]{bib12}
M. Emerton,
\emph{Locally analytic vectors in representations of locally $p$-adic analytic groups},
Mem. Amer. Math. Soc. \textbf{248} (2017), no.~1175,
\url{https://doi.org/10.1090/memo/1175}.

\bibitem[HW24]{bib3}
Y. Hu and H. Wang,
\emph{On some $p$-adic and mod $p$ representations of quaternion algebra over $\Q_p$},
J. Reine Angew. Math. \textbf{812} (2024), 163--210,
\url{https://doi.org/10.1515/crelle-2024-0025}.

\bibitem[Laz65]{bib4}
M. Lazard, \emph{Groupes analytiques $p$-adiques},
Publ. Math. Inst. Hautes \'Etudes Sci. \textbf{26} (1965), 5--219,
\url{https://doi.org/10.1007/BF02684303}.

\bibitem[LW25]{bib5}
H. Liu and H. Wang,
\emph{On the finite length of some $p$-adic representations of the quaternion algebra over $\Q_p$},
arXiv:2510.19218v1, October 22, 2025,
\url{https://arxiv.org/abs/2510.19218v1}.

\bibitem[Pa\v{s}22]{bib17}
V. Pa\v{s}k\=unas, \emph{On some consequences of a theorem of J. Ludwig},
J. Inst. Math. Jussieu \textbf{21} (2022), no.~3, 1067--1106,
\url{https://doi.org/10.1017/S1474748020000547}.

\bibitem[ST01]{bib21}
P. Schneider and J. Teitelbaum, with an appendix by D. Prasad,
\emph{$U(\mathfrak g)$-finite locally analytic representations},
Represent. Theory \textbf{5} (2001), 111--128,
\url{https://doi.org/10.1090/S1088-4165-01-00109-1}.

\bibitem[ST02]{bib7}
P. Schneider and J. Teitelbaum,
\emph{Locally analytic distributions and $p$-adic representation theory, with applications to $\GL_2$},
J. Amer. Math. Soc. \textbf{15} (2002), 443--468,
\url{https://doi.org/10.1090/S0894-0347-01-00377-0}.

\bibitem[ST03]{bib6}
P. Schneider and J. Teitelbaum,
\emph{Algebras of $p$-adic distributions and admissible representations},
Invent. Math. \textbf{153} (2003), 145--196,
\url{https://doi.org/10.1007/s00222-002-0284-1}.

\bibitem[Sch18]{bib8}
P. Scholze, \emph{On the $p$-adic cohomology of the Lubin--Tate tower},
Ann. Sci. \'Ec. Norm. Sup\'er. (4) \textbf{51} (2018), no.~4, 811--863,
\url{https://doi.org/10.24033/asens.2367}.

\bibitem[Sor26]{bib9}
R. Sorgdrager,
\emph{Gelfand--Kirillov bound for $p$-adic Banach representations with infinitesimal character for $\GL_2$ and quaternion units},
arXiv:2602.08856v1, February 9, 2026,
\url{https://arxiv.org/abs/2602.08856v1}.

\bibitem[Stacks]{bib14}
The Stacks Project Authors, \emph{The Stacks Project},
Sections 10.56 (\href{https://stacks.math.columbia.edu/tag/00JL}{tag 00JL})
and 10.58 (\href{https://stacks.math.columbia.edu/tag/00JV}{tag 00JV}),
and Lemmas 10.63.5 (\href{https://stacks.math.columbia.edu/tag/00LC}{tag 00LC}),
10.104.2 (\href{https://stacks.math.columbia.edu/tag/02JN}{tag 02JN}),
10.112.4 (\href{https://stacks.math.columbia.edu/tag/00OK}{tag 00OK}),
and 10.144.2 (\href{https://stacks.math.columbia.edu/tag/00UC}{tag 00UC}).

\bibitem[Ven02]{bib19}
O. Venjakob,
\emph{On the structure theory of the Iwasawa algebra of a $p$-adic Lie group},
J. Eur. Math. Soc. \textbf{4} (2002), no.~3, 271--311,
\url{https://doi.org/10.1007/s100970100038}.
\end{thebibliography}
\end{document}